\theoremstyle{plain}
\newtheorem{theorem}{Theorem}[section]
\newtheorem{lemma}[theorem]{Lemma}
\newtheorem{corollary}[theorem]{Corollary}
\newtheorem{proposition}[theorem]{Proposition}
\theoremstyle{remark}
\newtheorem{remark}[theorem]{Remark}
\newcommand{\calF}{ \mathcal{F}}
\newcommand{\Th}{\mathcal{T}_h}
\newcommand{\K}{T}
\renewcommand{\O}{\Omega}
\newcommand{\Eh}{{{\mathcal E}_h}}
\newcommand{\Eho}{{{\mathcal E}^{o}_h}}
\newcommand{\Ehb}{{{\mathcal E}^{\partial}_h}}
\DeclareMathOperator\supp{supp}
\begin{document}
\allowdisplaybreaks[4]
\numberwithin{figure}{section}
\numberwithin{table}{section}
 \numberwithin{equation}{section}
% \numberwithin{figure}{section}
%
\title[{A Posteriori Error Analysis in $L^{\infty}$-norm of DG Methods for Obstacle
Problem}]{Pointwise A Posteriori Error Analysis of a Discontinuous Galerkin Method
for the Elliptic Obstacle Problem}
\author{Blanca Ayuso de Dios} \thanks{}

\address{Dipartimento di Matematica e Applicazioni, Universit\`a di Milano Bicocca, Via Cozzi 55, 20125 Milano, Italy, \& }
\address{ Istituto di Matematica Applicata e Tecnologie Informatiche Enrico Magenes del C.N.R.,
via Ferrata, 27100, Pavia, Italy}

\email{blanca.ayuso@unimib.it}

 \author{Thirupathi Gudi}\thanks{}

\address{Department of Mathematics, Indian Institute of Science, Bangalore - 560012}
\email{gudi@math.iisc.ernet.in}

\author{Kamana Porwal}\thanks{The third author's work is supported in part by The SERB-MATRICS grant and in part by IITD SEED grant}

\address{Department of Mathematics, Indian Institute of Technology Delhi - 110016}

\email{kamana@maths.iitd.ac.in}

\date{}
\begin{abstract}
We present a posteriori error analysis in the supremum norm for the symmetric interior penalty discontinuous Galerkin method for the elliptic obstacle problem. We construct discrete barrier functions based on appropriate corrections of the conforming part of the solution that is obtained via a constrained averaging operator. The corrector function accounts properly for the non-conformity of the approximation and it is estimated by direct use of the Green's function of the unconstrained elliptic problem. The use of the continuous maximum principle guarantees the validity of the analysis without mesh restrictions but shape regularity. The proposed residual type estimators are shown to be reliable and efficient.
 Numerical results in two dimensions are included to verify the theory and validate the performance of the error estimator.
\end{abstract}
\keywords{finite element, discontinuous Galerkin, a posteriori
error estimate, obstacle problem, variational inequalities,
Lagrange multiplier, pointwise}
\subjclass{65N30, 65N15}
\maketitle
%%%%%%%%%%%%%%%%%%%%%%%%%%%%%%%%%%%%%%%%%%%%%%%%%%%%%%%%%%%%%%%%
\allowdisplaybreaks
\def\R{\mathbb{R}}
\def\cA{\mathcal{A}}
\def\cK{\mathcal{K}}
\def\cN{\mathcal{N}}
\def\p{\partial}
\def\O{\Omega}
\def\bbP{\mathbb{P}}
\def\cV{\mathcal{V}}
\def\cM{\mathcal{M}}
\def\cT{\mathcal{T}}
\def\cE{\mathcal{E}}
\def\bF{\mathbb{F}}
\def\cF{\mathcal{F}}
\def\bC{\mathbb{C}}
\def\bN{\mathbb{N}}
\newcommand{\n}{{\bf n}} %usata
\newcommand{\nK}{{\bf n}_{\K}}
\newcommand{\dx}{\,\mbox{d}x} %usata
\newcommand{\ds}{\,\mbox{d}s} %usata
\newcommand{\jump}[1]{\lbrack\!\lbrack\,#1\,\rbrack\!\rbrack} %usata
\newcommand{\media}[1]{\left\{\!\!\left\{ #1 \right\}\!\!\right\}}
\newcommand{\trin}{|\!|\!|}
\newcommand{\nn}{|\!|} %usata
\newcommand{\re}{ \mathbb{R}}

\def\ssT{{\scriptscriptstyle T}}
\def\HT{{H^2(\O,\cT_h)}}
\def\mean#1{\left\{\hskip -5pt\left\{#1\right\}\hskip -5pt\right\}}
\def\smean#1{\{\hskip -3pt\{#1\}\hskip -3pt\}}
\def\sjump#1{[\hskip -1.5pt[#1]\hskip -1.5pt]}
\def\jumptwo{\jump{\frac{\p^2 u_h}{\p n^2}}}
%%%%%%%%%%%%%%%%%%%%%%%%%%%%%%%%%%%%%%%%%%%%%%%%%%%%%%%

%%%%%%%%%%%%%%%%%%%%%%%%%%%%%%%%%%%%%%%%%%%%%%%%%%%%%%%%%%%%%%%%%%%%%%%%%%%
\section{Introduction}\label{sec:Intro}

Several problems in a variety of fields of science (Physics, Biology, Industry, Finance, Geometry to mention a few) are often described by partial differential equations (PDEs) that exhibit a-priori unknown interfaces or boundaries. They are typically called free boundary problems and constitute an important
line of research for the mathematical and the numerical analysis communities.\\
Among these problems, the most classical one is the elliptic obstacle problem that arises in diverse applications in elasticity, fluid dynamics and operations research, to name a few\cite{Ciarlet:1978:FEM,KS:2000:VI,Glowinski:2008:VI}. It is also a prototype of the elliptic variational inequalities of the first kind. It's variational formulation reads: find $u\in \cK$ such that
\begin{equation}
\int_\Omega \nabla u\cdot\nabla (u- v)\,dx\leq (f,u-v)\quad \forall v\in \cK, \label{eq:MP}
\end{equation}
where $\Omega\subset \R^d, ~d=2,3$ is a bounded, polygonal ($d=2$) or polyhedral ($d=3$) domain with  boundary $\partial\Omega$, 
$f \in L^{\infty}(\Omega)$ denotes the force and  $(\cdot,\cdot)$ refers to the $L^{2}(\O)$ inner-product. For any $1\leq r \leq \infty $ and $D \subset \O$, we denote the $L^{r}(D)$ norm by $\|\cdot \|_{L^{r}(D)}$. The set $D^o$ denotes the interior of D for any set $D \subset \O$. We further use standard notations of Sobolev spaces \cite[Chapter 3]{adams}. {The associated norm on Sobolev space $W^{m,r}(\O)$  is denoted by $\|\cdot\|_{W^{m,r}}$ and semi-norm by $|\cdot|_{W^{m,r}}$. The Sobolev space $W^{m,r}(\O)$ with $m=0$ will denote the standard $L^{r}(\O)$ space.} The set of admissible displacements $\cK$ is a non-empty, closed and convex set
defined by
\begin{equation}\label{def:K}
\cK:=\{v\in V : v\geq \chi \;\;\text{a.e. in}\;\;
\Omega\},
\end{equation}
with $V:=H^1_0(\Omega)$  and $\chi\in H^1(\Omega)\cap C^{0,\alpha}(\overline{\Omega}), \quad 0<\alpha\leq 1$ denoting the obstacle which satisfies the compatibility condition  $\chi\leq 0$ on $\partial\Omega$. 
In \eqref{eq:MP}, the solution $u$ could be regarded as the equilibrium
position of an elastic membrane subject to the load $f$ 
whose boundary is held fixed ($u\in H^{1}_0(\O)$)  and which
is constrained to lie above the given obstacle $\chi$.
Such constraint results in non-linearity inherent to the underlying PDE, the Poisson equation here. 
%{\rosso The obstacle $\chi$ is required to satisfy the compatibility condition 
%\begin{equation}\label{compat00}
%\chi\leq 0 \qquad \mbox{ on    } \partial \O\;.
%\end{equation}
%}
{ The classical theory of Stampacchia \cite[Chapter 1, page 4]{Glowinski:2008:VI}, \cite[Chapter 2, page 40]{KS:2000:VI} guarantees the existence and uniqueness of the solution.
As for the regularity of the solution, it is shown in~\cite{Frehse:1978:Regularity,CK:1980:Regularity1}, that under our assumptions on the domain and the data, $f \in L^{\infty}(\Omega)$ and $\chi\in H^1(\Omega)\cap C^{0,\alpha}(\overline{\Omega}), ~~ 0<\alpha\leq 1$,  the solution to the variational inequality \eqref{eq:MP} is also H\"older continuous i.e. we have $u \in H_0^1(\Omega)\cap C^{0,\beta}(\overline{\Omega})$ for $0<\beta\leq \alpha$.

% and in particular $u\in W^{1,p}(\O)\cap H^{1}_0(\O)$ with $p>d$ is guaranteed PUT CITATION TO STAMPACCHIA??
 }
%%%%%%
% In particular, we have the following regularity result.

%{\color{red}
%In \cite{Frehse:1978:Regularity,CK:1980:Regularity1}, it is shown that the solution of variational inequality \eqref{eq:MP} has the same modulus of continuity as the obstacle. In particular, we have the following regularity result.

%\begin{lemma} \label{lem:reg}
%Under the assumptions that $f \in L^{\infty}(\Omega)$ and $\chi\in H^1(\Omega)\cap C^{0,\alpha}(\overline{\Omega}), ~~ 0<\alpha\leq 1$, the unique solution $u$ of the variational inequality \eqref{eq:MP} is in $H^1_0(\Omega)\cap C^{0,\alpha}(\overline{\Omega})$.
%\end{lemma}
%}
%%%%%%%
\par \noindent
The contact (or coincidence) and non-contact sets of the exact solution $u$ are defined as
\begin{equation}\label{cnc}
\begin{aligned}
\mathbb{C}&:= \left\{x \in \O: u(x)=\chi(x)\right\}^{o}, &&\\
\mathbb{N}&:=\left\{x \in \O: u(x)>\chi(x)\right\}. &&
\end{aligned}
\end{equation}
The free boundary here is the interior boundary of the coincidence set $ \mathbb{C}$. The H\"older continuity of $u$ guarantees that both $\mathbb{C}$ and the free boundary are closed sets.
\par \noindent
{ From Riesz representation theorem, associated to the solution $u$ we introduce the linear functional $\sigma(u)\in H^{-1}(\Omega)$ by }

\begin{align}\label{eq:sigmadef}
\langle \sigma(u),v\rangle =(f,v)-a(u,v),\quad \forall v\in
H^1_0(\Omega),
\end{align}
where $\langle \cdot,\cdot\rangle$ refers to the duality pairing of
$H^{-1}(\Omega)$ and $H^1_0(\Omega)$. 
From \eqref{eq:sigmadef} and \eqref{eq:MP}, it follows that
\begin{equation}\label{eq:sigma}
\langle \sigma(u),v-u\rangle\leq 0,\quad  \forall v \in \cK.
\end{equation}
In particular\footnote{ Choosing  $0 \leq \phi  \in H_0^1(\Omega)$ and setting $v=u+\phi $ in \eqref{eq:sigma}, we get $\sigma(u) \leq 0$. Further, if $u> \chi$ on some open set $D \subset \Omega$ then $\sigma(u)= 0$ on $D$ (see \cite[Chapter 2, page 44]{KS:2000:VI}) }, $\sigma(u)=0$ on the non-contact set $\mathbb{N}$ { and the support of $\sigma(u)$ is contained in the contact set $\mathbb{C}$. In fact, \cite[Chapter II, Section 6]{KS:2000:VI} the Riesz representation theorem states that  there
 is a finite non-negative radon measure $-d \sigma$ supported on $\mathbb{C}$ satisfying:
$$
\langle \sigma(u),v\rangle = \int_{\O} v d\sigma= (f,v)-a(u,v) \qquad \forall\, v \in C_{0}(\bar\Omega),
$$
where $C_{0}(\bar\Omega)$ denotes the space of continuous functions on $\bar\O$ with compact support. Often, $\sigma(u)$ is also referred as the continuous Lagrange multiplier.  We wish to stress  that the solution operator of \eqref{eq:MP}  is not only non-linear and non-differentiable, but also strikingly not one-to-one in the sense that any variation in $f$ within the contact set might or might not result in a variation in the solution $u$.}

%\COMMENT{I would not write it here, but later when it will be needed...
%{\rosso Thanks to the regularity of the solution, we further have $\sigma(u) \in W^{-1,p}(\Omega),~ p>d$.}
%{\rosso Per scrivere questo ce bisogno di spiegazione... bastarebbe $u\in W^{2,p}(\O)$ Note that in view of Lemma \ref{lem:reg}, we have $\sigma(u) \in W^{-1,p}(\Omega),~ p>d.$ \\ }
%}

\par \noindent
A-priori error analysis in the energy norm for \eqref{eq:MP} are discussed in  \cite{Falk:1974:VI,BHR:1977:VI,BHR:1978:Mixed, Wang:2002:P2VI,GG:2018:3DObst} for conforming methods and in \cite{WHC:2010:DGVI} for DG methods. In energy norm, several  a-posteriori error estimators  are studied in ~\cite{BC:2004:VI,CN:2000:VI,Braess:2005:VI,Veeser:2001:VI,NPZ:2010:VI,TK:2015:VIP2, GG:2018:InObst} for conforming approximation of variational inequalities while residual type a posteriori error estimators have been derived in \cite{TG:2014:VIDG,TG:2014:VIDG1} for DG approximation. 
In this article, we study  a-posteriori error analysis in the supremum norm, for the symmetric interior penalty discontinuous Galerkin (SIPG) approximation of the elliptic obstacle problem \eqref{eq:MP}. The rationality of our choice of norm comes from the fact that estimates in the maximum norm are relevant for variational inequalities since they  provide further localized information  on the approximation. Moreover, they also constitute a cornerstone for a future subsequent convergence analysis of the free boundary \cite{BrezziCaffarelli, NochettoFB}.\\
\par \noindent
A-priori error analysis in $L^{\infty}$ norm goes back to the seminal works by Baiocchi and Nitsche \cite{Baiocchi:1977:VI,Nitsche:1977:VI}, for conforming finite linear finite elements. As the proof uses a discrete maximum principle, the analysis requires the classical assumption on acute angles for the mesh partitions \cite{Ciarlet:1978:FEM}.
Unlike for a-priori analysis,  in \cite{NSV:2002:Ptwise,NSV:FullyLoc} the authors accomplish the $L^{\infty}$ norm a-posteriori analysis of the obstacle problem, by using a {\it dual approach} to Baiocchi \cite{Baiocchi:1977:VI} and making use of the {\it continuous} maximum principle, hence avoiding any mesh restriction except the standard shape regularity.
 In \cite{NSV:2002:Ptwise}, a positivity preserving interpolation operator is used and in \cite{NSV:FullyLoc} the estimators and the analysis is refined to ensure further localization of the estimators.
 The ideas in \cite{NSV:2002:Ptwise,NSV:FullyLoc} have been extended in \cite{NSV:2006} to the conforming approximation of monotone semi-linear equations and in \cite{FV:2003:Ptwise4}, for  the conforming approximation of a double obstacle problem arising from regularized minimization of a related functional, providing a uniform error estimator  with respect to the regularization parameter that can be further used in the minimization of the discrete functional. { We also refer to the nice survey \cite{NochettoSalgado} for the approximation of classical and fractional obstacle problems.\\ }

\par \noindent
In this work, we follow the path laid down in \cite{NSV:2002:Ptwise,NSV:FullyLoc}, extending it and adapting it for the SIPG method. We stress that the nonconformity of the approximation and the nonlinearity of the problem, preclude the extension from being trivial. Also, the discontinuous nature of the finite element space allow for a slightly better localization of the error estimators.
To be able to use the continuous maximum principle, which permits to avoid mesh restrictions, we construct discrete barrier functions for the solution $u$ that belong to $H^{1}$. This in turn,  forces us to construct the discrete sub and super-solutions starting from the conforming part of the DG approximation and amend it with some corrector function that account for the nonconformity of our approximation. The conforming part of the DG approximation is obtained by using an averaging operator introduced in \cite{TG:2014:VIDG} that ensures the obstacle constraint are preserved. 
The proof of the reliability of the estimator relies then on providing a pointwise estimate on the corrector function. Here, our approach varies from  \cite{NSV:2002:Ptwise,NSV:FullyLoc} where a regularized Greens' function (for the unconstrained elliptic problem) is employed. Instead, we consider the direct use of Greens' functions catering for the sharpened estimates (on the Greens' function)  proved in \cite{DG:2012:Ptwise2, GG}. As a result, the final estimators are improved in the exponent of the logarithmic factor with respect to \cite{NSV:2002:Ptwise}, but more significantly, this approach allows for accounting  appropriately the non-conformity in the approximation and  evidences that the analysis performed for SIPG method, cannot be extended to any of the non-symmetric methods of the interior penalty family. The local efficiency estimates of the error estimator are obtained  using the bubble functions techniques \cite{AO:2000:Book}. 
{ To prove the efficiency of the estimator term that measures the error in approximation of the obstacle in the discrete contact set and discrete free boundary set, unlike in  \cite{NSV:2002:Ptwise,NSV:FullyLoc} we need to assume $\chi<0$ on the part of $\partial \O$ that intersects the free boundary. This assumption is not very restrictive and is done in many research works. We accept this technical restriction in order to keep the locality of the definition of the discrete Lagrange multiplier granted by the DG construction, which is computationally simpler than the ones introduced in \cite{NSV:2002:Ptwise,NSV:FullyLoc}.  }
Numerical experiments in two dimensions confirming the applicability of a posteriori estimates within adaptive schemes are presented.\\
\par
\noindent 
{
We expect that the analysis presented could be helpful in understanding the convergence of the DG approximation to the free boundary, taking into account the seminal works~\cite{BrezziCaffarelli, NochettoFB}. This would allow to assess if the often acclaimed flexibility of  DG methods could bring any real benefit with respect to the conforming approximation in this context. This will be the subject of future research. The ideas developed here might be used in the analysis of DG approximation of control problems as well as the a-posteriori error analysis of DG methods  for other variational inequalities e.g. Signorini problem \cite{Glowinski:2008:VI}.}
{The error estimator obtained in this article improves the error estimator
obtained in \cite{NSV:2002:Ptwise} by a logarithm factor when applied
to the conforming finite element methods. Except for this logarithm term,
our error estimator is comparable to that of \cite{NSV:2002:Ptwise}.}\\
\par
\noindent 
The  article is organized as follows.
 In the next section we introduce basic notations and some auxiliary results that will be used in further analysis. DG method for approximating the elliptic obstacle problem \eqref{eq:MP} and its main properties are discussed in Section \ref{sec:dgm}. In  Section \ref{sec:Reliaibility},  we introduce the error estimators which are the main tools for the analysis and prove the major result of the paper, namely the reliability of the estimator.  The efficiency of a posteriori error estimator is discussed in Section \ref{Sec:eff}. Numerical experiments are presented illustrating the theoretical findings and validating the  performance of the estimator  in Section \ref{sec:Numerics}.\\

\par
%Throughout the paper  $C$ will denote a generic positive constant, independent of $h$ and which is not always same.\\

\section{Notation and Preliminary Results}\label{sec:Prelims}
\par \noindent
In this section, we introduce useful notations and some auxiliary results which will be used throughout this work. 

%%%%
\begin{align*}
\cT_h &=\text{a simplicial triangulations of } \Omega, \text{that are assumed to be shape regular}\\
T&=\text{an element of } \cT_h, \qquad |T|= \text{volume of the simplex} ~T\\
 h_T &=\text{diameter of } T, \qquad h=\max\{h_T : T\in\cT_h\}, \qquad h_{min}=\min\{h_T : T\in\cT_h\}\\
\cV_h^i&=\text{set of all vertices in } \cT_h \text{ that are in }
\O\\
\cV_h^b&=\text{set of all vertices in } \cT_h \text{ that are on }
\p\O\\
\cV_h&=\cV_h^i\cup\cV_h^b\\
\cV_T&=\text{set of three vertices of simplex}~ T\\
\Eho &=\text{set of all interior edges/faces of } \cT_h\\
\Ehb &=\text{set of all boundary edges/faces of } \cT_h\\
\cE_h &=\Eho\cup\Ehb \\
h_e &=\text{diameter of  any edge/face } e\in\cE_h\\
\cT_p&=\text{set of all elements sharing the vertex } p\\
\cT_e&=\text{patch of two elements sharing the face } e\in \Eho ~\text{i.e.} ~\cT_e=\{T_+,T_-\}, \text{where} ~T_+ ~\text{and} ~T_- ~\text{are two} \\ & \quad \text{elements sharing the face}~ e\\
\cE_p&=\text{set of all edges/faces connected to the vertex } p\\
\nabla_h &=\text{piecewise (element-wise) gradient}.
\end{align*}

%%%%%%

%%%%%%%%%%
\par \noindent
 For any $T \in \Th,~ p\in \cV_h$ and $e \in \Eho $, we define,
\begin{align}
\omega_{T}  & := \cup \left\{ T' \in \Th \,\, :\,\, T'\cap T\ne \emptyset \right\} &&  \label{omega0} \\
\omega_{p} & :=\cup \{  T' \in \Th \,\, : T' \in \cT_p\}  && \label{omegaP}\\
\omega_e & :=T_+\cup
T_- \qquad. \label{omegaE}&&
\end{align}

\par \noindent

Further,
\begin{itemize}
\item For any function $v$, $v^{+}$ denotes it's positive part i.e. $v^{+}(x)=\max\{v(x),0\}$
\item For any function $v$, $supp(v)$ denotes support of the function $v$
\item $\|\cdot\|_{L^{\infty}(\cE_h)} := \displaystyle\max_{e \in \cE_h}\|\cdot \|_{L^{\infty}(e)}$
\item $\|\cdot\|_{L^{\infty}(\cT_h)}:= \displaystyle\max_{T \in \cT_h}\|\cdot \|_{L^{\infty}(T)}$
\item $X \lesssim Y$: there exist a positive constant $C$, independent of mesh parameter and the solution such that $X \leq C Y$.  
\end{itemize}
\par
\noindent
The shape regularity  assumption on $\cT_h$ 
\cite[Chapter 3, page 124]{Ciarlet:1978:FEM} implies that 
$$|\cE_p|  \lesssim   1 \quad \forall p\in \cV_h,$$
where $|\cE_p|$ denotes the cardinality of ${\cE_p}$.
\par
\noindent
In order to define the jump and mean of discontinuous
functions appropriately, we introduce the broken Sobolev space
\begin{eqnarray*}
H^1(\O,\cT_h)=\{v\in L^{2}(\Omega) :\,v_{\ssT}= v|_{T}\in
H^1(T)\quad\forall\,~T\in{\mathcal T}_h\}.
\end{eqnarray*}
\par \noindent
{\sc Trace Operators:}  Let
$e\in\Eho$ be shared by two elements $T_+$ and $T_-$ such that
 $e=\partial T_+\cap\partial T_-$.  If $n_+$ is the unit normal of $e$ pointing from $T_+$ to $T_-$ then $n_-=-n_+$.
We set $v_\pm=v\big|_{T_\pm}$ and define the jump and the mean of $v\in H^1(\Omega,{\mathcal T}_h)$
on $e$  by
\begin{eqnarray*}
 \sjump{v} = v_+n_++v_-n_- \,\quad \mbox{and}\,\quad \smean{v} = \frac{1}{2}(v_++v_-),\quad \mbox{respectively.}
\end{eqnarray*}

\par \noindent
Similarly for a vector valued function $q\in H^1(\Omega,{\mathcal
T}_h)^d$, the jump and mean on $e\in\Eho$ are defined by
\begin{eqnarray*}
 \sjump{q} = q_+\cdot n_++q_-\cdot n_-\quad\,\mbox{and}\quad\,\smean{q} = \frac{1}{2}(q_++q_-).
\end{eqnarray*}
\par \noindent
 For a boundary face $e\in \Ehb$, with  $n_e$ denoting the unit normal of $e$ that points outward to $T$ for $\partial T\cap \partial\Omega=e$, we set: 
\begin{align*}
 \sjump{v}= vn_e\,\quad \mbox{and}\,\quad \smean{v}=v, \quad \forall\, v\in H^1(T), &&\\
 \sjump{q}= w\cdot n_e,\,\,\mbox{and}\,\,\smean{q}=q,\quad \forall\, q\in H^1(T)^d. &&\
\end{align*}
Throughout the article, $C$ denotes a generic positive constant which is independent of the mesh size $h$.\\
{\sc Discrete Spaces:} Let $\mathbb{P}^{k}(T)$, $k \geq 0$ integer, is the space of polynomials defined on $T$ of degree less
than or equal to $k$. We define the first order discontinuous finite element space as
\begin{eqnarray*}
V_h=\{v\in L^{2}(\Omega) : v|_T\in\mathbb{P}^{1}(T),\,T\in \cT_h\}.
\end{eqnarray*}
The conforming finite element space, denoted here by $V_h^{conf}$ is then defined as  
$$
V_h^{conf}
=V_h\cap H_0^1(\Omega).
$$
 Let $\chi_h \in V_h^{conf}$ be the nodal
interpolation of $\chi$. We define the discrete analogue of
$\cK$ by
\begin{equation}\label{kh}
\cK_h:=\{v_h\in V_h: v_h|_T(p)\geq \chi_h(p),\;\; \forall p\in \cV_T,\;\; \forall T\in\cT_h\},
\end{equation}
\noindent
which is a nonempty, closed and convex subset of $V_h$. {Note however that, since $\chi$ is a general function, we have $\cK_h \nsubseteq \cK$. In case of $\chi$ being an affine function, we have $\cK_h \cap H_0^1(\Omega) \subseteq \cK$.}

\par
\subsection{Some Discrete Operators}\label{sec:Smoothing}
We now introduce some discrete operators that will be used in further analysis.\\
Following  \cite{DG:2012:Ptwise2},  we define a local projection operator $\Pi_h:L^{1}(\Omega)\longrightarrow V_h$.
For any $T \in \cT_h$, let $(\phi_p^T)_{p \in \mathcal{V}_T}$  be associated Lagrange basis functions. Then, for any $v\in L^{1}(T)$, we define $\Pi_h^T: L^{1}(T)\longrightarrow V_h$ by
\begin{equation}\label{defPik}
 \Pi_h^T v(x):=\sum_{p \in \mathcal{V}_T}\phi_p^T(x)\int_{T} \psi_p^T(s) v(s)~ds,
%\Pi_T v (x)& :=\sum_{p \in \mathcal{V}_T}\phi_p^T(x)\int_T \psi_p^T(s) v(s)~ds,
\end{equation}
where
$(\psi_p^T)_{p \in \mathcal{V}_T}$ is the $L^{2}(T)$ dual basis of $(\phi_p^T)_{p \in \mathcal{V}_T}$ i.e.
\begin{align*}
\int_{T} \phi_p^T(x) \psi_q^T (x)~dx= \delta_{pq}, \quad p,q \in \mathcal{V}_T.
\end{align*}
Then,  $\Pi_h$ is defined by setting  $\Pi_h v|_T  =\Pi_h^T v$ for each $T\in \Th$.
Observe that from \eqref{defPik} we do have $\Pi_h^T\phi_p^T=\phi_p^T$ which implies $\Pi_h^T$ is indeed a projection, and so is $\Pi_h$. Hence we have $\Pi_h v_h=v_h$ for all $v_h\in V_h$.

\noindent
The following Lemma from \cite{DG:2012:Ptwise2} provides stability and approximation properties of $\Pi^{T}_h$. It's proof follows by using  Bramble Hilbert lemma  and standard scaling arguments; see  \cite{DG:2012:Ptwise2} for details.
%are similar to that of the Clement interpolation operator
\begin{lemma}\label{lem:approx_proj}
Let $T\in \Th$  and let  $\Pi_h^T: L^{1}(T)\longrightarrow V_h$ be the local projection defined in \eqref{defPik}. Let $s$ be an integer such that $ 0\leq s\leq 2$ and  $\psi\in W^{s,1}(T)$.  Then,
\begin{align}
|\Pi^T \psi|_{W^{s,1}(T)} & \lesssim  | \psi|_{W^{s,1}(T)},  \qquad  0\leq s\leq 2 &&\label{stabPik0} \\
\sum_{k=0}^{s} h_{T}^{k-s} |\psi-\Pi^T_h\psi|_{W^{k,1}(T)} & \lesssim | \psi|_{W^{s,1}(T)}, \qquad  0\leq s\leq 2. &&\label{stabPik2}
\end{align}

\end{lemma}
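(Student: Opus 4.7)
The plan is to reduce both estimates to the reference simplex via the standard affine Piola-type transformation and then combine two ingredients on the reference element: (i) an elementary $L^1$-stability bound for $\hat\Pi$, and (ii) polynomial preservation, which upgrades full-norm bounds to seminorm bounds by a Deny--Lions/Bramble--Hilbert argument. The main thing to be careful about is how the dual basis $\psi_p^T$ transforms, since it is defined through an $L^2(T)$-biorthogonality condition rather than nodally.

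First I would fix an affine map $F_T:\hat T\to T$, set $\hat\phi_p=\phi_p^T\circ F_T$, and check that the correct pullback of the dual basis is $\hat\psi_p=|\det B_T|\,\psi_p^T\circ F_T$. A direct change of variables shows that $\{\hat\phi_p,\hat\psi_p\}$ is an $L^2(\hat T)$-biorthogonal pair, and another change of variables gives the commutation
\[
\widehat{\Pi_h^T\psi}=\hat\Pi\,\hat\psi,\qquad \hat\psi:=\psi\circ F_T,
\]
where $\hat\Pi$ is the analogous local projection on $\hat T$. On $\hat T$, the functions $\hat\phi_p$ and $\hat\psi_p$ lie in a fixed finite-dimensional space, so $\|\hat\phi_p\|_{L^\infty(\hat T)}+\|\hat\psi_p\|_{L^\infty(\hat T)}\lesssim 1$; hence $\hat\Pi$ is bounded from $L^1(\hat T)$ into $\mathbb{P}^1(\hat T)$, i.e.\ $\|\hat\Pi\hat v\|_{L^1(\hat T)}\lesssim\|\hat v\|_{L^1(\hat T)}$. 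Because $\mathbb{P}^1(\hat T)$ is finite dimensional, all seminorms there are equivalent, so
\[
|\hat\Pi\hat v|_{W^{s,1}(\hat T)}\lesssim\|\hat\Pi\hat v\|_{L^1(\hat T)}\lesssim\|\hat v\|_{L^1(\hat T)},\qquad 0\le s\le 2.
\]

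For the seminorm stability \eqref{stabPik0}, I would exploit that $\hat\Pi$ is a projection onto $\mathbb{P}^1$ (hence reproduces any $p\in\mathbb{P}^{s-1}$ for $s\le 2$). Writing $\hat\Pi\hat\psi=\hat\Pi(\hat\psi-p)+p$ and noting $|p|_{W^{s,1}(\hat T)}=0$, the previous bound together with the Deny--Lions lemma (applied with an optimal $p\in\mathbb{P}^{s-1}$) yields
\[
|\hat\Pi\hat\psi|_{W^{s,1}(\hat T)}\lesssim\|\hat\psi-p\|_{L^1(\hat T)}\lesssim|\hat\psi|_{W^{s,1}(\hat T)}.
\]
For the approximation estimate \eqref{stabPik2}, use the identity $\hat\psi-\hat\Pi\hat\psi=(I-\hat\Pi)(\hat\psi-p)$ for $p\in\mathbb{P}^1$ (so $\hat\Pi p=p$), apply the triangle inequality together with the $L^1$-stability of $\hat\Pi$ and the norm equivalence on $\mathbb{P}^1$, and pick $p$ via Bramble--Hilbert so that $\|\hat\psi-p\|_{W^{k,1}(\hat T)}\lesssim|\hat\psi|_{W^{s,1}(\hat T)}$ for every $0\le k\le s$. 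This delivers $\sum_{k=0}^{s}|\hat\psi-\hat\Pi\hat\psi|_{W^{k,1}(\hat T)}\lesssim|\hat\psi|_{W^{s,1}(\hat T)}$.

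Finally, I would pull everything back to $T$. Shape regularity gives $\|B_T\|\sim h_T$, $\|B_T^{-1}\|\sim h_T^{-1}$, and $|\det B_T|\sim h_T^d$, so $|v|_{W^{k,1}(T)}\sim h_T^{d-k}|\hat v|_{W^{k,1}(\hat T)}$. Applying this scaling to the two reference estimates above produces \eqref{stabPik0} directly, while for \eqref{stabPik2} the factors $h_T^{d-k}$ and $h_T^{d-s}$ combine so that the weights $h_T^{k-s}$ in the statement appear precisely. The only step requiring any care is the transformation of the dual basis in the very first paragraph; everything else is the textbook scaling/Bramble--Hilbert pattern.
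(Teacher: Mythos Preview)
Your proposal is correct and follows exactly the approach the paper indicates: the paper does not spell out a proof but states that it ``follows by using Bramble Hilbert lemma and standard scaling arguments'' and refers to \cite{DG:2012:Ptwise2} for details. Your write-up is precisely that argument, with the added care of tracking how the $L^2$-dual basis transforms under the affine map, which is the only nonstandard point in an otherwise textbook scaling/Bramble--Hilbert computation.
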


%\cite{Ciarlet:1978:FEM,BScott:2008:FEM,PE:2012:DI}
\noindent We now revise the following inverse and trace inequalities which will be frequently used in our later analysis.\\
%\cite[Chapter 3, page 140]{Ciarlet:1978:FEM},\cite[Chapter 1, page 26 ]{PE:2012:DI}
{\it Inverse Inequalities~\cite[Chapter 3, page 140]{Ciarlet:1978:FEM}:} For any $v_h\in V_h$ and $1 \leq p\,, q  \leq \infty$,
\begin{align}
\|v_h\|_{W^{m,p}(T)} &  \lesssim h_{T}^{\ell -m}   h_{T}^{d\left(\frac{1}{p}-\frac{1}{q}\right)}
\|v_h\|_{W^{\ell,q}(T)}\quad &\forall\,T\in \cT_h,\label{eq:inverse}\\
\|\nabla v_h\|_{L^{p}(T)} &  \lesssim h_T^{-1}
\|v_h\|_{L^{p}(T)}\quad&\forall\,T\in \cT_h,\label{eq:inverse1}\\
\|v_h\|_{L^{\infty}(T)} &  \lesssim  h_T^{-\frac{d}{2}}
\|v_h\|_{L^{2}(T)}\quad &\forall\,T\in \cT_h,\label{eq:inverse2} \\
\|v_h\|_{L^{\infty}(e)} & \lesssim h_e^{\frac{1-d}{2}}\|v_h\|_{L^{2}(e)} \quad &\forall\,e\in \Eh,\label{eq:inverse3}
\end{align}
where $h_e$ and $h_T$ denote, respectively,  the diameters of the face $e$ and the element $T$.

{\it Trace inequality~\cite{agmon65}}: Let $\psi\in W^{1,p}(T), ~~~T\in \cT_h$
and let $e\in\cE_h$ be an edge/face of $T$. Then for any $1 \leq p < \infty$, it holds that
{
\begin{equation}\label{eq:traceAg}
\|\psi\|_{L^{p}(e)}^p \lesssim h_e^{-1} \big( \|\psi\|_{L^{p}(T)}^p+
h_e^{p}\|\nabla \psi\|_{L^{p}(T)}^p\big),
\end{equation}
}
where $h_e$ denotes the diameter of the face $e$.

{\sc Averaging or Enriching Operator:} We close the section by introducing an {averaging} (sometimes called enriching) operator $E_h:V_h\longrightarrow
V_h^{conf}$  that  plays an essential role in the analysis.
Such type of operator was first introduced in the seminal work \cite{KP:2003:APosterioriDG}, where the first a posteriori error analysis for DG approximations of elliptic problems was presented. For the obstacle problem, it is necessary to ensure that $E_h$ exhibits an
additional constraint preserving  property, as proposed in \cite{TG:2014:VIDG}. This construction will be essential in the forthcoming analysis.
\par
\noindent
Bearing in mind that any function in linear conforming finite element space $V_h^{conf}$ is uniquely
determined by its nodal values at the vertices $\cV_h$ of $\cT_h$, 
for $v_h\in V_h$, we define $E_hv_h\in V_h^{conf}$ as follows:
\begin{equation}\label{eq:Enrich}
E_hv_h=\sum_{p\in\cV_h} \alpha(p) \phi^{h}_{p}(x) \mbox{     with     } \alpha(p) := \left\{ \begin{array}{ll} 0 & \text{ if } p\in\cV_h^b,\\\\
\min\{v_h|_T(p):T\in\cT_p\} & \text{ if } p\in\cV_h^i.
\end{array}\right.
\end{equation}
Observe that from its definition it follows immediately that
\begin{equation}\label{eq:Ehi2}
E_hv_h(p)\geq \chi_h(p)\quad \forall v_h\in \cK_h,\quad \forall
p\in\cV_h,
\end{equation}
and in particular $E_hv_h\in \cK_h\cap V_h^{conf}$ for all $v_h\in\cK_h$.

\par
\smallskip \noindent
Next Lemma provides some approximation properties for the operator $E_h$. Similar estimates for general polynomial degree for the unconstrained version of the averaging operator introduced in \cite{KP:2003:APosterioriDG} can be found in \cite{DG:2012:Ptwise2}.
\begin{lemma}\label{lem:Eapen4}
Let $E_h:V_h\longrightarrow
V_h^{conf}$ be the operator defined through \eqref{eq:Enrich}. Then, for any $v\in V_h$ it holds
\begin{align}
\|E_hv-v\|_{L^{\infty}(\cT_h)} & \lesssim  \| \sjump{v}\|_{L^{\infty}(\cE_h)} \label{eq:apen3} \\
\text{and}\qquad \qquad \qquad \qquad \qquad& \notag\\
\max_{T \in \cT_h} ~ h_T\|\nabla(E_hv-v)\|_{L^{\infty}(T)} & \lesssim  \| \sjump{v}\|_{L^{\infty}(\cE_h)}. \label{eq:apen4}
\end{align}
\end{lemma}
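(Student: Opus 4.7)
The plan is to work elementwise: on each $T \in \cT_h$, express
\[
(E_h v - v)|_T(x) = \sum_{p \in \cV_T} \bigl(\alpha(p) - v|_T(p)\bigr)\,\phi^{T}_{p}(x),
\]
so that both bounds reduce to controlling $|\alpha(p) - v|_T(p)|$ at each vertex $p$ of $T$, together with the standard facts $\|\phi^T_p\|_{L^\infty(T)} \leq 1$ and $\|\nabla\phi^T_p\|_{L^\infty(T)} \lesssim h_T^{-1}$ (which follows from shape regularity). Then \eqref{eq:apen3} is immediate once the nodal bound $|\alpha(p) - v|_T(p)| \lesssim \|\sjump{v}\|_{L^\infty(\cE_h)}$ is in place, and \eqref{eq:apen4} follows after multiplying by $h_T$.

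For an \emph{interior} vertex $p \in \cV_h^i \cap \cV_T$, the definition \eqref{eq:Enrich} gives
\[
|\alpha(p) - v|_T(p)| \;=\; v|_T(p) - \min_{T' \in \cT_p} v|_{T'}(p) \;\leq\; \max_{T',T'' \in \cT_p} \bigl|v|_{T'}(p) - v|_{T''}(p)\bigr|.
\]
Since any two simplices $T',T''$ sharing the vertex $p$ can be connected by a chain of elements in $\cT_p$ in which consecutive members share a face $e \in \Eho$ containing $p$, and across each such face one has $|v_+(p) - v_-(p)| \leq \|\sjump{v}\|_{L^\infty(e)}$ (because $\sjump{v} = (v_+ - v_-)\mathbf{n}_+$ at $p$ along the face), shape regularity bounds $|\cT_p|$ uniformly so that
\[
|\alpha(p) - v|_T(p)| \;\lesssim\; \max_{e \in \cE_p}\|\sjump{v}\|_{L^\infty(e)} \;\leq\; \|\sjump{v}\|_{L^\infty(\cE_h)}.
\]

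For a \emph{boundary} vertex $p \in \cV_h^b \cap \cV_T$, we have $\alpha(p) = 0$, so we must estimate $|v|_T(p)|$ directly. The key observation is that $p$ lies on some boundary face $e \in \Ehb$ with $e \subset \partial T'$ for some $T' \in \cT_p$, and on such a face $\sjump{v} = v\,\mathbf{n}_e$, so that $|v|_{T'}(p)| \leq \|\sjump{v}\|_{L^\infty(e)}$. If $T' \neq T$, we connect $T$ to $T'$ by the same chain-of-faces argument in $\cT_p$ as above to control $|v|_T(p) - v|_{T'}(p)|$ by $\|\sjump{v}\|_{L^\infty(\cE_h)}$; the triangle inequality then gives the same bound on $|v|_T(p)|$.

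The main subtlety — and it is more a point requiring care than a genuine obstacle — is exactly this treatment of boundary vertices, since the definition of $E_h$ enforces $\alpha(p)=0$ there regardless of the values of $v$, and one must exploit that boundary faces contribute to $\sjump{\cdot}$ through the trace itself. Once the vertex bound is established uniformly for interior and boundary vertices, summing over $p \in \cV_T$ (whose cardinality is $d+1$) and using the hat-function bounds yields \eqref{eq:apen3}; the same summation with $\|\nabla\phi^T_p\|_{L^\infty(T)} \lesssim h_T^{-1}$ yields \eqref{eq:apen4} after multiplying through by $h_T$ and taking the maximum over $T \in \cT_h$.
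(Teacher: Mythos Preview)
Your proposal is correct and follows essentially the same approach as the paper: expand $(E_hv-v)|_T$ in the Lagrange basis, bound the nodal discrepancies $|\alpha(p)-v|_T(p)|$ by jump terms (distinguishing interior and boundary vertices), and then use $\|\phi_p^T\|_{L^\infty(T)}\le 1$ together with the inverse inequality. You spell out the chain-of-faces argument and the boundary-vertex case more explicitly than the paper does, and you derive \eqref{eq:apen4} via $\|\nabla\phi_p^T\|_{L^\infty(T)}\lesssim h_T^{-1}$ rather than applying the inverse inequality \eqref{eq:inverse1} directly to $E_hv-v$, but these are cosmetic differences.
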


\begin{proof}

Let $T\in\cT_h$. Note that the Lagrange basis functions $\phi_p^T$ satisfies the property~ $\|\phi_p^T \|_{L^{\infty}(T)}= 1~ \forall ~ p \in \cV_h$. For any $p\in \cV_T\cap \cV_h^i$, using the definition of $E_h$ we have the existence of
some $T_*\in \cT_p$ such that $E_hv(p)=\alpha(p)=v|_{T_*}(p)$ and $E_hv(p)=0$ for $p\in\cV_T\cap \cV^b_h$. Therefore, for any $v\in V_h$, we have,
\begin{align*}
\|E_hv-v\|_{L^{\infty}(T)} &= \|\sum_{p \in \cV_T}(\alpha(p)-v(p)) \phi_p^T\|_{L^{\infty}(T)} \\
& \leq   \sum_{p \in \cV_T\cap\cV_h^i} |\alpha(p)-v(p)|+\sum_{p \in \cV_T\cap\cV_h^b} |\alpha(p)-v(p)| \\
& \leq   \sum_{p \in \cV_T\cap\cV_h^i} |v|_{T_*}(p)-v(p)|+\sum_{p \in \cV_T\cap\cV_h^b} |v(p)| \\
&{\lesssim  \sum_{p \in \cV_T} \| \sjump{v}\|_{L^{\infty}(\cE_p)}, }
%&{\lesssim    \| \sjump{v}\|_{L^{\infty}(\partial T)},}
\end{align*}
which leads to \eqref{eq:apen3}. Finally, the estimate \eqref{eq:apen4} follows from the inverse inequality \eqref{eq:inverse1}.

\end{proof}

\subsection{Some results on Green's functions and Regularity for the unconstrained problem}
{ 
To prove the reliability estimates we will make use of the Green's function for an unconstrained Poisson problem. We collect here several results that will be further required in our analysis.
We start with an elementary Lemma on the regularity of the solution of the unconstrained problem.

\begin{lemma}\label{lem:ax0A}
Let $g\in W^{-1,r}(\Omega)$ for some $r>d\geq 2$. Then, there exists a unique weak solution $y \in W^{1,r}(\Omega) \cap H^{1}_0(\Omega)$ to the problem
\begin{equation}\label{pois00}
-\Delta y =g \quad \mbox{ in   } \Omega\;, \qquad \qquad y=0 \mbox{    on     } \partial \Omega\;,
\end{equation}
such that it satisfies the estimate
\begin{equation}\label{eq:21}
\|y\|_{W^{1,r}(\Omega)}\leq C_r \|g\|_{W^{-1,r}(\Omega)},
\end{equation}
with $C_r=Cr$.
Furthermore, $y \in C^{0}(\Omega)$ by virtue of the Sobolev embedding.
\end{lemma}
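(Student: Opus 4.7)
The plan is to establish existence and uniqueness via Lax--Milgram, promote the $H^{1}_{0}$ solution to $W^{1,r}$ via classical Calder\'on--Zygmund theory on Lipschitz polytopal domains, and then obtain $C^{0}$--regularity from Morrey's embedding.

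First, since $r>d\geq 2$ and $\Omega$ is bounded, H\"older's inequality provides the continuous embedding $W^{-1,r}(\Omega)\hookrightarrow H^{-1}(\Omega)$. Hence $g\in H^{-1}(\Omega)$ and the Lax--Milgram lemma applied to the symmetric, coercive bilinear form $a(u,v)=\int_{\Omega}\nabla u\cdot\nabla v\,dx$ on $H^{1}_{0}(\Omega)$ with right-hand side $v\mapsto\langle g,v\rangle$ yields a unique weak solution $y\in H^{1}_{0}(\Omega)$ of \eqref{pois00}, together with the energy estimate $\|y\|_{H^{1}_{0}(\Omega)}\lesssim\|g\|_{H^{-1}(\Omega)}$.

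The next step, which is the main obstacle, is to upgrade the integrability of $\nabla y$ to $L^{r}$ with the correct linear growth $C_{r}=Cr$. To this end, I would represent $y$ through the Green's function $G$ of the Dirichlet Laplacian on $\Omega$, $y(x)=\int_{\Omega}G(x,\cdot)\,g$, and read $\nabla y$ as a singular integral operator with kernel $\nabla_{x}G(x,y)$. Standard $L^{p}$-theory for the Dirichlet problem on bounded Lipschitz (in particular polygonal/polyhedral) domains asserts that this operator is bounded from $W^{-1,r}$ to $L^{r}$ for an admissible range of $r$, and the Marcinkiewicz--Riesz--Thorin interpolation between the $L^{2}$-bound (with constant $1$) and the weak-type $(\infty,\infty)$ bound for the Riesz transform / Green kernel produces the sharp asymptotic $C_{r}\lesssim r$ as $r\to\infty$. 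Combined with the Poincar\'e--Friedrichs inequality, this delivers \eqref{eq:21}. The delicacy here lies entirely in the kernel estimates for $\nabla G$, which are the ones sharpened in the references \cite{DG:2012:Ptwise2, GG} already cited in the paper and on which the subsequent Green's-function analysis will rely.

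Finally, since $r>d$, Morrey's embedding gives $W^{1,r}(\Omega)\hookrightarrow C^{0,1-d/r}(\overline{\Omega})\hookrightarrow C^{0}(\overline{\Omega})$, so in particular $y\in C^{0}(\Omega)$, which concludes the lemma.
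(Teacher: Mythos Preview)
Your proposal is essentially aligned with what the paper does: the paper does not give a formal proof at all, but simply states that \eqref{eq:21} follows from the Calder\'on--Zygmund theory of singular integral operators (citing Gilbarg--Trudinger) and that continuity follows from the Sobolev embedding since $y\in W^{1,r}(\Omega)$ with $r>d$. Your Lax--Milgram step for existence/uniqueness and Morrey embedding for $C^{0}$ regularity are exactly in this spirit.

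One small remark on emphasis: you route the $W^{1,r}$ estimate through the Green's function representation and the sharpened kernel bounds of \cite{DG:2012:Ptwise2,GG}, whereas the paper attributes \eqref{eq:21} directly to the classical Calder\'on--Zygmund theory in Gilbarg--Trudinger (and reserves the refined Green's-function estimates for Proposition~\ref{Greens0}). Either packaging is acceptable here, but since the lemma is stated on a polygonal/polyhedral domain you should be aware that the full-range $W^{1,r}$ theory on such domains is more delicate than on smooth domains; the paper sidesteps this by simply citing the standard reference, and you may want to do the same rather than sketch an interpolation argument whose endpoint bounds on Lipschitz polytopes require independent justification.
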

\par \noindent
The proof of estimate \eqref{eq:21} is done using the Calderon-Zygmund theory of  of singular integral operators (see \cite{Gilbard-Trudingerbook}; the estimate holds for $r\ge 2$). The continuity of the solution is guaranteed by Sobolev embedding since $y \in W^{1,r}(\Omega),\,\, r>d$. We remark that, although not advocated here,  a-priori bounds on the $L^{\infty}$-norm of the solution to \eqref{pois00}  in terms of the $W^{-1,r}(\Omega)$-norm of $g$ are given in \cite[Theorem 4.2 (a)]{Stampacchia} and \cite[Lemma 1]{CASAS}\footnote{This proof, although valid for polyhedral domains, uses elliptic regularity of \eqref{pois00} which requires convexity of the domain and hence we do not use this result.}.\\
\par \noindent
The results in \cite[Section 9]{Stampacchia} guarantee that we can define a Green's function associated to problem \eqref{pois00}. More precisely,
for all $x\in \Omega$ there exists a Green's function $\mathcal{G}(x,\xi): \Omega \times \Omega \longrightarrow \mathbb{R}$, defined as the solution (in the sense of distributions) to
\begin{equation}\label{Ga0}
\left\{\begin{aligned}
-\Delta_{\xi} \mathcal{G} &= \delta_{x}(\xi), \qquad &\xi \in \Omega, &&\\
\mathcal{G}(x,\xi) &=0\qquad &\xi \in \partial \Omega, &&
\end{aligned}\right.
\end{equation}
such that for any $y\in H^{1}_{0}(\O)\cap W^{1,q}(\Omega), q>d,$ the following representation holds
\begin{equation}\label{key0}
y(x)=(\nabla y, \nabla \mathcal{G}(x,\cdot))_{L^{2}(\Omega)}\;.
\end{equation}
In \eqref{Ga0}, $\delta_{x}(\xi)=\delta(x-\xi)$ denotes the delta function around the point $x$.
  Notice that the Green's function $\mathcal{G}$  in \eqref{Ga0} has indeed a singularity at $x$. Next proposition provides some estimates on the Green's function $\mathcal{G}$ which account for the different regularity of $\mathcal{G}$ near and far from $x$.  For their proof, we refer to \cite{GG} which heavily relies on the estimates shown in \cite{DG:2012:Ptwise2}, improving them in a logarithmic factor. Both works make use of the fine estimates shown in \cite{HKin,G26,G24}.
  \begin{proposition}\label{Greens0}
  Let $\mathcal{G}$ be the Green's function defined in \eqref{Ga0}. Then for any $x\in \Omega$, the following estimate hold
  \begin{equation}\label{dem:22}
  \|\mathcal{G}(x,\cdot)\|_{L^{1}(\Omega)} + \|\mathcal{G}(x,\cdot)\|_{L^{d/(d-1)}(\Omega)} +|\mathcal{G}(x,\cdot)|_{W^{1,1}(\Omega)} \lesssim 1.
  \end{equation}
  In addition, for the ball $B(x,\rho)$ of radius $\rho$ centered at $x\in \Omega$, the following estimates hold true:
  \begin{align}
   \|\mathcal{G}(x,\cdot)\|_{L^{1}(B(x,\rho)\cap\Omega)} &\lesssim \rho^{2} \log(2+\rho^{-1})^{\kappa_d} \;, \label{24a}\\
  |\mathcal{G}(x,\cdot)|_{W^{1,1}(B(x,\rho)\cap \Omega)} &\lesssim \rho\;, \label{24d}\\
    |\mathcal{G}(x,\cdot)|_{W^{2,1}(\Omega\smallsetminus B(x,\rho)\cap \Omega)} &\lesssim \log(2+\rho^{-1})\;, \label{24e}\\
 |\mathcal{G}(x,\cdot)|_{W^{1,d/(d-1)}(\Omega\smallsetminus B(x,\rho)\cap \Omega)} &\lesssim \log(2+\rho^{-1})\;, \label{24c}
 \end{align}
 where $\kappa_{2}=1$ and $\kappa_{3}=0.$
 \end{proposition}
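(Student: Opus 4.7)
The plan is to reduce each of the five estimates to an integration in polar coordinates centered at the singular point $x$, once pointwise bounds on the Green's function and its derivatives are available. The key ingredients are the classical pointwise estimates of the form $|\mathcal{G}(x,\xi)|\lesssim \Phi_d(|x-\xi|)$, $|\nabla_\xi \mathcal{G}(x,\xi)|\lesssim |x-\xi|^{1-d}$ and $|D^{2}_\xi \mathcal{G}(x,\xi)|\lesssim |x-\xi|^{-d}$, where $\Phi_2(r)=\log(2+r^{-1})$ and $\Phi_d(r)=r^{2-d}$ for $d\geq 3$. These pointwise bounds hold for bounded Lipschitz domains (in particular polygonal/polyhedral $\Omega$) by the refined analyses of Gr\"uter--Widman, Kim and collaborators (see \cite{HKin,G26,G24,DG:2012:Ptwise2}) and their logarithmically sharpened version in \cite{GG}.

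First I would prove the global bounds \eqref{dem:22}. Writing $r=|x-\xi|$ and $R=\mathrm{diam}(\Omega)$, the volume element in polar coordinates around $x$ contributes an $r^{d-1}$ factor. Hence $\int_\Omega |x-\xi|^{2-d}\,d\xi \lesssim \int_0^R r\,dr \lesssim 1$, giving the $L^1$ bound; the case $d=2$ uses $\int_0^R \log(2+r^{-1})\,r\,dr<\infty$ in the same fashion. The $L^{d/(d-1)}$ bound follows from the fact that the effective exponent $(2-d)\tfrac{d}{d-1}+(d-1)=\tfrac{1}{d-1}>-1$ is integrable near $0$, and similarly for $d=2$ with powers of the logarithm. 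The $W^{1,1}$ seminorm reduces to $\int_0^R dr = R \lesssim 1$.

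Next I would handle the local estimates \eqref{24a} and \eqref{24d} on the ball $B(x,\rho)$. For \eqref{24a}, integrating $r^{2-d}$ against $r^{d-1}$ over $(0,\rho)$ yields $\rho^{2}/2$ when $d\geq 3$ (hence $\kappa_3=0$), whereas for $d=2$ one has $\int_0^\rho \log(2+r^{-1})\,r\,dr \lesssim \rho^{2}\log(2+\rho^{-1})$, accounting for $\kappa_2=1$. The seminorm \eqref{24d} is obtained identically to the global $W^{1,1}$ bound but with upper limit $\rho$, producing $\rho$ directly. For the far-field estimates \eqref{24e} and \eqref{24c} over $\Omega\setminus B(x,\rho)$, the pointwise bound $|D^{2}\mathcal{G}|\lesssim r^{-d}$ gives $\int_\rho^{R} r^{-d}\,r^{d-1}\,dr = \log(R/\rho) \lesssim \log(2+\rho^{-1})$, and the identical computation applied to $|\nabla \mathcal{G}|^{d/(d-1)}\sim r^{-d}$ produces the same logarithm.

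The main obstacle is not the integrations above, which are elementary once the pointwise bounds are at hand, but rather the derivation of those pointwise bounds up to the boundary of a general non-smooth domain and, crucially, the removal of one logarithmic factor with respect to earlier references such as \cite{NSV:2002:Ptwise}. That improvement is the technical heart of \cite{GG,DG:2012:Ptwise2}, and for it I would simply invoke those references rather than redo the singular-integral analysis on Lipschitz domains.
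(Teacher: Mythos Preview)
The paper does not actually prove this proposition; it simply defers to \cite{GG} and \cite{DG:2012:Ptwise2}, which in turn rest on \cite{HKin,G26,G24}. Your sketch is therefore more explicit than anything the paper offers, and for the zeroth- and first-order estimates \eqref{dem:22}, \eqref{24a}, \eqref{24d}, \eqref{24c} the polar-coordinate reduction you outline is exactly the standard route once the pointwise bounds on $\mathcal{G}$ and $\nabla\mathcal{G}$ are in hand; those are indeed available on Lipschitz domains via Gr\"uter--Widman type arguments.

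There is, however, a real gap in your treatment of the $W^{2,1}$ estimate \eqref{24e}. The pointwise Hessian bound $|D^{2}_\xi \mathcal{G}(x,\xi)|\lesssim |x-\xi|^{-d}$ that you invoke is \emph{not} valid on general polygonal or polyhedral domains: the paper explicitly allows non-convex $\Omega$ (Example~2 in Section~\ref{sec:Numerics} is the L-shaped domain), and near a re-entrant corner the Green's function carries a corner singularity whose second derivatives blow up at a rate governed by the interior angle, independently of $|x-\xi|$. The references \cite{DG:2012:Ptwise2,GG} obtain \eqref{24e} not through a global pointwise Hessian bound but via a dyadic decomposition of $\Omega\setminus B(x,\rho)$ into annuli, combined with local $H^{2}$-type regularity and Caccioppoli inequalities on each annulus, summed to produce the logarithm. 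Your closing remark that you would ultimately invoke those references is the correct fallback, but the preceding integration argument for \eqref{24e} does not go through as written.
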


}

\section{Interior Penalty Discontinuous Galerkin approximation}\label{sec:dgm}
In this Section, we introduce the interior penalty (IP) discontinuous Galerkin method for approximating the obstacle problem \eqref{eq:MP}, discuss it's properties and comment on the possibility of using any of its non-symmetric versions.
\par\noindent
The IP method reads:  find $u_h \in \cK_h$ such
that
\begin{equation}
\cA_h(u_h,u_h-v_h)\leq (f,u_h-v_h)\qquad\forall\,v_h\in\cK_h,
\label{eq:FEM}
\end{equation}
where  the bilinear form $\cA_h(\cdot,\cdot)$ can be written as the sum of bilinear forms $a_h(\cdot,\cdot)$ and $b_h(\cdot,\cdot)$ i.e.
\begin{equation}
\cA_h(v_h,w_h)=a_h(v_h,w_h)+b_h(v_h,w_h) \label{eq:Ah}\quad \forall~ v_h, w_h \in V_h,
\end{equation}
with
\begin{equation}
a_h(v_h,w_h)=(\nabla_h v_h,\nabla_h w_h)\label{eq:ah},
\end{equation}
and the bilinear form $b_h(\cdot,\cdot)$ gathers the consistency and the
stability terms:
\begin{align}\label{bh}
b_h(v_h,w_h)&=-\sum_{e\in\cE_h} \int_e \big(\smean{\nabla v_h}\sjump{w_h}+ 
\smean{\nabla w_h}\sjump{v_h}\big)ds +\sum_{e\in\cE_h} \int_e
\frac{\gamma}{h_e}\sjump{v_h}\sjump{w_h}\,ds,
\end{align}
where $\gamma\geq \gamma_0>0$ with $\gamma_0$ as a sufficiently large positive number to ensure coercivity of $\cA_h(\cdot,\cdot)$.
It can be checked, using standard theory \cite{WHC:2010:DGVI}, that the discrete problem \eqref{eq:FEM} has a unique solution  $u_h \in \cK_h$.\\
\par\noindent

We wish to notice that we focus on the symmetric interior penalty method (SIPG). 
While it is typical in the DG framework to consider the whole IP family, considering both the symmetric and the two nonsymmetric methods, our pointwise a-posteriori error analysis holds only for the symmetric one. The underlying reason being, as it will be exhibited later on, the use of duality in the error analysis which entails symmetry of the method. This is highlighted in Remark \ref{rem:NONSYM} in Section \ref{sec:Reliaibility}.\\

\par\noindent
For our analysis, we need to consider an extension of $\cA_h(\cdot,\cdot)$ that allows for testing non-discrete functions less regular than $H^{1}(\O)$. Let $p> d$ and $1\leq q < \frac{d}{d-1}$ be the conjugate of $p$ i.e. $q=\frac{p}{p-1}$. Set,
\begin{equation}\label{def:M}
 \mathcal{M}=W^{1,q}_0(\O)+V_h. 
 \end{equation}
 Then, we define $\widetilde{\mathcal{A}}_h : \left(W^{1,p}_0(\O)+V_h\right) \times \left( W^{1,q}_0(\O)+V_h\right) \longrightarrow \mathbb{R}$ by
\begin{equation}\label{ext:A}
\widetilde{\cA}_h(v,w):= a_h(v,w)+\tilde{b}_h(v,w)  , \quad v \in W^{1,p}_0(\O)+V_h, ~ w \in \mathcal{M},
\end{equation}
where
\begin{align}\label{ext:b}
\tilde{b}_h(v,w)=-\sum_{e\in\cE_h} \int_e \smean{\nabla \Pi_h(v)}\sjump{w} ds   - \sum_{e\in\cE_h} \int_e \smean{\nabla \Pi_h w}\sjump{v}\,ds +\sum_{e\in\cE_h} \int_e
\frac{\gamma}{h_e}\sjump{v}\sjump{w}\,ds. 
\end{align}
Notice that since $\Pi_h$ is a projection, $\widetilde{\cA}_h(v,w)={\cA}_h(v,w)$ for $v, w \in V_h$.
Following, \cite{Veeser:2001:VI,TG:2014:VIDG}, we define the discrete Lagrange multiplier $\sigma_h\in V_h$, which could be thought of as an approximation to the functional $\sigma(u)$ defined in \eqref{eq:sigmadef}: 
\begin{equation}\label{def-sigmah}
\langle \sigma_h,v_h\rangle_h :=(f,v_h)-\cA_h(u_h,v_h) \quad \forall v_h \in V_h,
\end{equation}
\text{where} $\langle\cdot,\cdot\rangle_h$ is given by: for $w_h,v_h \in V_h,$
\begin{align}\label{sigmah0}
\langle w_h,v_h\rangle_h&= \sum_{T \in \mathcal{T}_h}\int_T {I_h}(w_h|_T v_h|_T)~dx=\sum_{T \in \mathcal{T}_h}\sum_{p \in \mathcal{V}_T}w_h(p)v_h(p)\int_T \phi_p^h~dx \nonumber \\
& =\sum_{T \in \mathcal{T}_h}\frac{|T|}{d+1} \sum_{p \in \mathcal{V}_T}w_h(p)v_h(p) ,
\end{align}
with $\{\phi_p^h\}_p$ as nodal basis functions and {$I_h$ is defined by setting  $I_h v|_T  =I_h^T v$ for each $T\in \Th$ where $I_h^T$ denotes the standard Lagrange nodal interpolation operator}. The use of the $\langle\cdot,\cdot \rangle_h$ inner product in the definition  \eqref{def-sigmah} of $\sigma_h$ allows for localizing $\sigma_h$ at the vertices of the partition, which allow for an easy implementation and  will play a crucial role in the further analysis. The use of \eqref{def-sigmah}-\eqref{sigmah0} was brought into play in \cite{Veeser:2001:VI,TG:2014:VIDG} for the a posteriori error analysis in the energy norm of conforming and DG approximations of the obstacle problem. While more refined definitions of the discrete Lagrange multiplier have been proposed in \cite{NSV:FullyLoc}  in the conforming framework, in this work we stick to this definition. The  main reason is that here, already the nature of the discontinuous spaces  grants us the possibility of localizing the discrete multiplier  $\sigma_h$ to its support; outside the discrete noncontact set.  Also, as we shall show, the use of \eqref{sigmah0}  allows for a first clean analysis of the DG approximation.\\
We define the discrete contact, non-contact and free boundary sets relative to the discrete solution $u_h$, as:
\begin{align}
\mathbb{C}_h& :=\{ T \in \cT_h : u_h(p)=\chi_h(p)~ \forall ~p \in \cV_T\} &&\label{conth}\\
\mathbb{N}_h&:=\{ T \in \cT_h : u_h(p)>\chi_h(p)~ \forall ~p \in \cV_T\} &&\label{nconth}\\
\mathbb{M}_h&= \cT_h \setminus (\mathbb{C}_h \cup \mathbb{N}_h).&&\label{freebh}
\end{align}
Basically, elements in $\mathbb{M}_h$ have the property of having of at least a vertex $p$ with $u_h(p)=\chi_h(p)$ and at least a vertex $p'$ with $u_h(p')>\chi_h(p')$, forming the discrete free boundary set.\\
\par \noindent
Using \eqref{def-sigmah} and the discrete problem \eqref{eq:Ah}, we obtain that
\begin{align}\label{eq:disRes}
\langle \sigma_h,v_h-u_h\rangle_h \leq 0 \quad \forall~ v_h\in \cK_h.
\end{align}
  Let now, $\phi^h_p \in V_h$ be the Lagrange basis function associated with the vertex $p$ (hence, it takes the value one at the node $p$ and vanishes at all other nodes). By setting $v_h=u_h+\phi^h_p$ in \eqref{eq:disRes} we obtain $\sigma_h(p)\leq 0 ~\forall~ p \in \mathcal{V}_T$.  Consequently, since the choice of the vertex $p$ is arbitrary and $\sigma_h$ is linear on $T$, we conclude $\sigma_h\leq 0 ~\text{in}~ T ~~ \forall~ T \in \mathcal{T}_h$.
Next, let $p$ be a vertex of an element $T\in \mathbb{N}_h$. Then, by taking $v_h=u_h-\delta \phi^h_p$ for some $\delta >0$ sufficiently small, and recalling that $u_h(p)>\chi_h(p)$ (see \eqref{nconth}), combined with the previous estimate $\sigma_h \leq 0 ~\text{in}~ T $, gives $\sigma_h(p)=0$ i.e.
\begin{align} \label{eq:sigp1}
\sigma_h(p)=0~ ~\forall~ p \in \cV_h ~\text{for which}~ u_h(p)>\chi_h(p),
\end{align}
{in particular,}
\begin{align*}
\sigma_h(p)=0~~ \forall\, p \in \mathcal{V}_T,~~ T \subset \mathbb{N}_h.
\end{align*}
Observe that, in view of \eqref{eq:sigp1}, the expression \eqref{sigmah0} with $w_h=\sigma_h$ is reduced to
\begin{align}\label{sigmah}
\langle\sigma_h, v_h\rangle_h&= \sum_{T \in \mathcal{T}_h} \int_T I_h(\sigma_h|_T v_h|_T)~dx=\sum_{\substack{
T \in \mathcal{T}_h\\
T\notin \mathbb{N}_h}}\sum_{p \in \mathcal{V}_T}\sigma_h(p)v_h(p)\int_T \phi_p~dx \nonumber \\
& =\sum_{T \in \mathbb{C}_h\cup\mathbb{M}_h}\frac{|T|}{d+1} \sum_{p \in \mathcal{V}_T}\sigma_h(p)v_h(p).
\end{align}

\par\noindent
We now give an approximation property for $\sigma_h$ that is proved along the lines of\cite[Lemma 3.3]{Veeser:2001:VI}.
For the sake of completeness we provide the proof.
\begin{lemma}\label{lem:Ihapprox} For any $1\leq p\leq \infty, v \in V_h$ and $T \in \mathcal{T}_h$,
\begin{equation*}
\int_T(\sigma_h v_h- I_h(\sigma_h v_h))~dx \leq h_T^2 \|\nabla \sigma_h\|_{L^{p}(T)} \|\nabla v_h\|_{L^{q}(T)},
\end{equation*}
where $q$ is the conjugate of $p$ i.e. $\frac{1}{p}+\frac{1}{q}=1$.
\end{lemma}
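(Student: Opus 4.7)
The plan is to exploit that both $\sigma_h$ and $v_h$ are $\mathbb{P}^{1}$ on $T$, so the product $\sigma_h v_h$ is quadratic while $I_h(\sigma_h v_h)$ is simply its Lagrange interpolant at the vertices; the difference therefore vanishes at every vertex of $T$ and can be integrated \emph{exactly} in barycentric coordinates, without invoking Bramble--Hilbert. Writing $\{p_i\}_{i=0}^{d}$ for the vertices of $T$, $\{\lambda_i\}_{i=0}^{d}$ for the barycentric coordinates, and setting $a_i:=\sigma_h(p_i)$, $b_i:=v_h(p_i)$, the first step is to expand $\sigma_h v_h = \sum_{i,j} a_i b_j \lambda_i \lambda_j$ and, using $\sum_j \lambda_j = 1$, rewrite $I_h(\sigma_h v_h) = \sum_{i,j} a_i b_i \lambda_i \lambda_j$. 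A short symmetrisation in $(i,j)$ then recasts the difference in the factorised form
\[
\sigma_h v_h - I_h(\sigma_h v_h) \;=\; -\tfrac{1}{2}\sum_{i,j}(a_i-a_j)(b_i-b_j)\,\lambda_i\lambda_j,
\]
which is precisely where the two gradient factors of the final estimate will originate.

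The second step is to integrate, invoking the standard identity $\int_{T}\lambda_i\lambda_j\,dx = |T|/((d+1)(d+2))$ for $i\neq j$, to obtain an exact expression $-\frac{|T|}{2(d+1)(d+2)}\sum_{i\neq j}(a_i-a_j)(b_i-b_j)$. I then pass to absolute values and bound each nodal increment by a gradient: since $\nabla\sigma_h$ and $\nabla v_h$ are constant vectors on $T$, one has $|a_i-a_j|\leq h_T|\nabla\sigma_h|$ and $|b_i-b_j|\leq h_T|\nabla v_h|$. The conversion to $L^{p}$/$L^{q}$ norms is then pure bookkeeping of measure exponents: $\|\nabla\sigma_h\|_{L^{p}(T)} = |T|^{1/p}|\nabla\sigma_h|$ and $\|\nabla v_h\|_{L^{q}(T)} = |T|^{1/q}|\nabla v_h|$, and because $\tfrac{1}{p}+\tfrac{1}{q}=1$, the surviving factor $|T|$ collapses to $1$, leaving exactly $h_T^{2}\|\nabla\sigma_h\|_{L^{p}(T)}\|\nabla v_h\|_{L^{q}(T)}$ multiplied by a dimensional constant of value $d/(2(d+2))\leq 1$, which is absorbed into the inequality as stated.

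The only step that demands a moment of thought is the symmetrisation producing the $(a_i-a_j)(b_i-b_j)$ structure, since it is this factorisation that allows the single quadratic remainder to split into one gradient factor from $\sigma_h$ and one from $v_h$; everything else is barycentric arithmetic and Hölder-type balancing of $|T|$ exponents. No inverse inequality, trace inequality, or mesh restriction beyond shape regularity enters, and the identity is sharp in the sense that it does not discard any cancellation from the exact interpolation remainder.
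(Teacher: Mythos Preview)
Your argument is correct and complete: the barycentric expansion, the symmetrisation identity
\[
\sigma_h v_h - I_h(\sigma_h v_h)=-\tfrac12\sum_{i,j}(a_i-a_j)(b_i-b_j)\lambda_i\lambda_j,
\]
the exact integration of $\lambda_i\lambda_j$, and the recombination of $|T|=|T|^{1/p}|T|^{1/q}$ all check out, and you even obtain the explicit constant $d/(2(d+2))<1$.

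The paper proceeds differently. It invokes a one-line scaling/interpolation estimate $\int_T (w-I_hw)\,dx \le h_T^{2}\int_T |D^{2}w|\,dx$, observes that for $w=\sigma_h v_h$ with both factors affine the Hessian reduces to (a constant multiple of) $\nabla\sigma_h\cdot\nabla v_h$, and finishes with H\"older. Your route is more elementary in that it avoids any appeal to Bramble--Hilbert or scaling and produces a sharp, dimension-explicit constant; the paper's route is shorter and more transparently generalisable (to higher-degree interpolants or non-polynomial $w$), at the cost of leaving the constant implicit. Both lead to the same final estimate, and since the lemma is only ever applied with a $\lesssim$ downstream, either argument suffices.
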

\begin{proof}
{
Using { a scaling argument} we find that,
\begin{align*}
\int_T(\sigma_h v_h- I_h(\sigma_h v_h))~dx & \leq h_T^2 \int_T |D^2(\sigma_h v_h)| ~dx \\
&=h_T^2 \int_T |\nabla \sigma_h \cdot \nabla v_h| ~dx,
\end{align*}
where in the last line we have used the fact that $\sigma_h$ and $v_h$ are linear functions on $T$. Finally, the estimate of the Lemma follows using the H{\"o}lder's inequality.
}
\end{proof}
%}}

%

%%%%%%%%%%%%%%%%%%%%%%%%%%%%%%%%%%%%%%%%%%%%%%%%%%%%%%%%%
%%%%%%%%%%%%%%%%%%%%%%%%%%%%%%%%%%%
\section{A posteriori Error Analysis: Reliability}\label{sec:Reliaibility}
In this section we lay down the scheme for the pointwise a posteriori analysis, introducing the error estimator and stating the first main result of the paper, namely, we present the reliability analysis of the error estimator. The analysis follows using with the ideas introduced in \cite{NSV:2002:Ptwise}, with adequate modifications in order to account for the non-conformity of the approximation.
Before we state the main theorem of the section and outline the path of the proof, we define the Galerkin functional, which plays the same role as the residual functional for unconstrained problems.\\

\par
\noindent
Similar to the extension of the discrete bilinear form  in  \eqref{ext:A}, we first introduce an extended definition of the continuous $\sigma(u)$ defined in \eqref{eq:sigmadef}.
%Let $p>d, ~ 1\leq q < \frac{d}{d-1}$ and let $\mathcal{M}=W^{1,q}_0(\Omega)+V_h $, 
Define
$\tilde{\sigma}(u) \in \cM^*$ as
\begin{align}\label{eq:extend:sigmadef}
\langle \tilde{\sigma}(u),v\rangle &=(f,v)-\widetilde{\mathcal{A}}_h(u,v),\quad \forall v \in \cM \;,
\end{align}
where the space $\cM$ is defined in \eqref{def:M}. Observe that due to the definition of $\widetilde{\mathcal{A}}_h(\cdot,\cdot)$, we plainly have
$$
\langle \tilde{\sigma}(u),v\rangle \equiv \langle \sigma(u),v\rangle \quad \forall\, v\in V.
$$

\noindent
 We now define the Galerkin or {\it residual functional} $G_h \in \cM^*$ for the nonlinear problem under consideration.
Let  $G_h \in \cM^*$ be defined as
\begin{equation}\label{eq:Gh}
\langle G_h,v \rangle =\widetilde{\mathcal{A}}_h(u-u_h,v)+\langle \tilde{\sigma}(u)-{\sigma_h},v \rangle \quad \forall v \in \cM.
\end{equation}
{
Note that since $\sigma_h \in V_h \subset L^2(\Omega)\subset H^{-1}(\Omega) $, we have $\langle \sigma_h,v \rangle =( \sigma_h,v)$ for all $v \in \cM$.

}
\par\noindent
Observe that by restricting the action of the functional $G_h$ to the subspace $H^{1}_0(\O)\subset W^{1,q}_0(\Omega)$ (and respectively to conforming subspace $V_h^{conf} \subset V_h$) we have the quasi-orthogonality property and for any $ v \in V $,
 \begin{align}\label{eq:Gh2}
 \langle G_h,v \rangle & =\widetilde{\mathcal{A}}_h(u-u_h,v)+\langle \tilde{\sigma}(u)-{\sigma_h},v \rangle  \nonumber&& \\
&=a(u,v)-\widetilde{\mathcal{A}}_h({u_h},v)+\langle \sigma(u)-{\sigma_h},v \rangle  && \nonumber\\
&=(f,v)-\widetilde{\mathcal{A}}_h(u_h,v)-\langle {\sigma_h},v \rangle .&&
\end{align}
The functional equations \eqref{eq:Gh} and  \eqref{eq:Gh2} will be essential for our analysis.

\par
\noindent
We now define the error estimators:
\begin{align}
\eta_1 &= \displaystyle \max_{T \in \cT_h}~\|h_T^2(f-\sigma_h)\|_{L^{\infty}(T)}, \label{eq:eta01}\\
\eta_2 &= \displaystyle \max_{e \in \Eho}  ~\|h_e \sjump{\nabla u_h}\|_{L^{\infty}(e)}, \label{eq:eta02}\\
\eta_3 &= \|\sjump{u_h}\|_{L^{\infty}(\cE_h)} ,\label{eq:eta03}\\
\eta_4 &={  \displaystyle \max_{T \in \mathbb{C}_h\cup \mathbb{M}_h} \|h_T^{2}\nabla \sigma_h\|_{L^{d}(T)}} ,\label{eq:eta04}\\
\eta_5 &= \|(\chi-u_h)^{+}\|_{L^{\infty}(\Omega)},\label{eq:eta05}\\
\eta_6 &=\|({u_h}-\chi)^{+}\|_{L^{\infty}(\{\sigma_h<0\})}.\label{eq:eta06}
\end{align}
The full a posteriori error estimator $\eta_h$ is then defined as:
\begin{align*}
\eta_h&=  |\log h_{min}| \Big(\eta_1 +\eta_2  +\eta_3+ \eta_4 \Big)+ \eta_5 +\eta_6.
%\|(\chi-u_h)^{+}\|_{L^{\infty}(\Omega)}+\|({u_h}-\chi)^{+}\|_{L^{\infty}(\{\sigma_h<0\})}.
%\displaystyle \sum_{T \in \cT_h} h_T^4\|\nabla \sigma_h\|_{L^{2}(T)}^2 \Big)^{1/2} $
\end{align*}
 Therein, the estimator term  $\eta_4$ can be controlled by the volume residual term up to the oscillation of $f$ as follows.
\begin{lemma} \label{lem:Estmsig}
For any $T \in \mathbb{C}_h\cup \mathbb{M}_h$, it holds that,
\begin{align*}
\| h_T^2\nabla \sigma_h\|_{L^{d}(T)} \leq \|h_T^2(f-\sigma_h)\|_{L^{\infty}(T)}+ Osc(f,T),
\end{align*}
where 
\begin{equation}\label{def:Osc}
Osc(f,T)=\displaystyle\min_{\tilde{f} \in \mathbb{P}^0(T)}h_T^2\|f- \tilde{f}\|_{L^{\infty}(T)}.
\end{equation}
\end{lemma}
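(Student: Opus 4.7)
The plan is to exploit the fact that $\sigma_h$ is piecewise linear, so its gradient is constant on each element, and to compare $\sigma_h$ to $f$ via a constant best-approximant. Since any $\tilde f\in\mathbb{P}^{0}(T)$ satisfies $\nabla\tilde f=0$, we can write
\begin{equation*}
\nabla\sigma_h=\nabla(\sigma_h-\tilde f)\quad\text{on }T,
\end{equation*}
and then reduce the $L^{d}$-norm of a gradient of a polynomial to an $L^{\infty}$-norm of the function itself by combining the inverse estimate with the shape regularity $|T|^{1/d}\lesssim h_T$.

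Concretely, first I would pick an arbitrary $\tilde f\in\mathbb{P}^{0}(T)$ and apply the inverse inequality \eqref{eq:inverse1} with $p=d$ to $\sigma_h-\tilde f\in V_h$, yielding
\begin{equation*}
\|\nabla(\sigma_h-\tilde f)\|_{L^{d}(T)}\lesssim h_T^{-1}\|\sigma_h-\tilde f\|_{L^{d}(T)}\le h_T^{-1}|T|^{1/d}\|\sigma_h-\tilde f\|_{L^{\infty}(T)}\lesssim \|\sigma_h-\tilde f\|_{L^{\infty}(T)}.
\end{equation*}
Multiplying by $h_T^{2}$ and inserting $\pm f$ inside the $L^{\infty}$-norm gives, by the triangle inequality,
\begin{equation*}
\|h_T^{2}\nabla\sigma_h\|_{L^{d}(T)}\lesssim \|h_T^{2}(f-\sigma_h)\|_{L^{\infty}(T)}+h_T^{2}\|f-\tilde f\|_{L^{\infty}(T)}.
\end{equation*}
Taking the infimum over $\tilde f\in\mathbb{P}^{0}(T)$ on the right identifies the second term with $Osc(f,T)$ from \eqref{def:Osc} and yields the claimed bound.

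There is no genuine obstacle here; the argument is essentially a routine inverse-estimate plus oscillation trick. The only subtlety worth flagging is that the hypothesis $T\in\mathbb{C}_h\cup\mathbb{M}_h$ is not actually used in the algebra: it is just the region where $\sigma_h$ might be nonzero, since \eqref{eq:sigp1} forces $\sigma_h\equiv 0$ on every $T\in\mathbb{N}_h$, so the lemma is trivially true there with both sides vanishing. The implicit constant absorbed into the $\leq$ sign depends only on the shape-regularity of $\cT_h$ via the inverse inequality \eqref{eq:inverse1} and the equivalence $|T|^{1/d}\simeq h_T$.
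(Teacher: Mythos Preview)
Your argument is correct and matches the paper's own proof essentially step for step: introduce an arbitrary constant $\tilde f\in\mathbb{P}^0(T)$, use $\nabla\sigma_h=\nabla(\sigma_h-\tilde f)$, apply an inverse inequality to pass from $\|\nabla(\sigma_h-\tilde f)\|_{L^{d}(T)}$ to $\|\sigma_h-\tilde f\|_{L^{\infty}(T)}$, and finish with the triangle inequality and the definition of $Osc(f,T)$. Your extra remark that the hypothesis $T\in\mathbb{C}_h\cup\mathbb{M}_h$ plays no role in the algebra is also correct; the paper simply states it to localize where $\eta_4$ lives.
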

\begin{proof}
Let $T \in \mathbb{C}_h\cup \mathbb{M}_h$ and $\bar{f} \in \mathbb{P}^0(T)$. A use of inverse inequality \eqref{eq:inverse1} and triangle inequality yields
\begin{align*}
\| h_T^2\nabla \sigma_h\|_{L^{d}(T)} &= \| h_T^2\nabla (\sigma_h-\bar{f})\|_{L^{d}(T)} \\
&\leq  \|h_T^2(\bar{f}-\sigma_h)\|_{L^{\infty}(T)} \\& \leq \|h_T^2(f-\sigma_h)\|_{L^{\infty}(T)}+ Osc(f,T).
\end{align*}
\end{proof}
\par\noindent
We now can state the main result of this section, namely the reliability of the error estimator $\eta_h$.
\begin{theorem}\label{thm:reliability}
Let $u \in \cK$ and $u_h \in \cK_h$ be the solutions of \eqref{eq:MP} and \eqref{eq:Ah}, respectively.
Then,
\begin{align*}
\|u-{u_h}\|_{L^{\infty}(\Omega)}\lesssim \eta_h.
\end{align*}
\end{theorem}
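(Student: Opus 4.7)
The plan is to transplant the Nochetto--Siebert--Veeser duality argument to the SIPG setting, adapting it to handle the non-conformity of the discretization. I would first decompose the error through the conforming companion $u_h^{c}:=E_h u_h\in V_h^{conf}$ obtained from the constrained averaging \eqref{eq:Enrich},
\[
u-u_h=(u-u_h^{c})+(u_h^{c}-u_h),
\]
and dispose of the non-conforming piece at once using Lemma~\ref{lem:Eapen4}: $\|u_h^{c}-u_h\|_{L^\infty(\cT_h)}\lesssim \|\sjump{u_h}\|_{L^\infty(\cE_h)}=\eta_3$. The remaining conforming error $u-u_h^{c}\in H^1_0(\Omega)\cap C^0(\overline\Omega)$ vanishes on $\partial\Omega$, so I would reduce the task to estimating $(u-u_h^{c})(x_0)$ at a point $x_0\in\Omega$ realizing its extremum and treat the upper and lower bounds by symmetric arguments.

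For the upper bound I would use the Green's function representation \eqref{key0}, which is legitimate because $\mathcal{G}(x_0,\cdot)\in W^{1,q}_0(\Omega)\subset \cM$ for every $q<d/(d-1)$:
\[
(u-u_h^{c})(x_0)=\bigl(\nabla(u-u_h^{c}),\nabla \mathcal{G}(x_0,\cdot)\bigr).
\]
Combining \eqref{eq:sigmadef} for $u$ with the extended Galerkin identity \eqref{eq:Gh2} for $u_h$, and adding and subtracting the broken gradient $\nabla_h u_h$, the right-hand side rewrites as
\[
\langle G_h,\mathcal{G}(x_0,\cdot)\rangle + \langle \sigma_h-\sigma(u),\mathcal{G}(x_0,\cdot)\rangle + \tilde{b}_h(u_h,\mathcal{G}(x_0,\cdot)) - \bigl(\nabla_h(u_h^{c}-u_h),\nabla\mathcal{G}(x_0,\cdot)\bigr).
\]
To expose the estimators I would insert $\Pi_h\mathcal{G}(x_0,\cdot)$ in the residual pairing: the vertex-quadrature discrepancy $\langle \sigma_h,\Pi_h\mathcal{G}(x_0,\cdot)\rangle_h-(\sigma_h,\Pi_h\mathcal{G}(x_0,\cdot))$ is converted by Lemma~\ref{lem:Ihapprox} into an $\eta_4$ contribution, while the approximation bounds of Lemma~\ref{lem:approx_proj} combined with elementwise integration by parts turn the volume and edge residuals into weighted pairings of $f-\sigma_h$ and $\sjump{\nabla u_h}$ against $\mathcal{G}(x_0,\cdot)-\Pi_h\mathcal{G}(x_0,\cdot)$ and its edge analogue, producing exactly the weights $h_T^2$ and $h_e$ in \eqref{eq:eta01}--\eqref{eq:eta02}. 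Since $\sjump{\mathcal{G}(x_0,\cdot)}=0$, the surviving contribution from $\tilde{b}_h(u_h,\mathcal{G}(x_0,\cdot))$ reduces to $-\sum_e\int_e\smean{\nabla\Pi_h\mathcal{G}(x_0,\cdot)}\sjump{u_h}\,ds$, which together with $-(\nabla_h(u_h^{c}-u_h),\nabla\mathcal{G}(x_0,\cdot))$ is absorbed by $\eta_3\,|\mathcal{G}(x_0,\cdot)|_{W^{1,1}(\Omega)}$ via the trace inequality \eqref{eq:traceAg} and the stability of $\Pi_h$.

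The logarithmic factor emerges through the usual near/far decomposition $\Omega=B(x_0,\rho)\cup(\Omega\setminus B(x_0,\rho))$: on the ball I would invoke the local $L^1$ and $W^{1,1}$ bounds \eqref{24a}--\eqref{24d} of Proposition~\ref{Greens0}, on the complement the sharper $W^{1,d/(d-1)}$ and $W^{2,1}$ bounds \eqref{24e}--\eqref{24c}, and balance with $\rho\sim h_{\min}$; the single logarithm in \eqref{24e}--\eqref{24c} is what yields the exponent $1$ on $|\log h_{\min}|$, which improves the factor in \cite{NSV:2002:Ptwise}. For the obstacle contribution $\langle \sigma_h-\sigma(u),\mathcal{G}(x_0,\cdot)\rangle$ I would exploit sign information: $\sigma(u)\le 0$, $\mathcal{G}(x_0,\cdot)\ge 0$, and $\sigma(u)$ supported in $\{u=\chi\}$ convert $-\langle\sigma(u),\mathcal{G}(x_0,\cdot)\rangle$ into an $\eta_5$ contribution through $u=\chi\ge u_h-(\chi-u_h)^{+}$; analogously $\sigma_h\le 0$ is supported in $\mathbb{C}_h\cup\mathbb{M}_h$, and writing $u_h=\chi+(u_h-\chi)^{+}-(\chi-u_h)^{+}$ on $\{\sigma_h<0\}$ yields a bound by $\eta_5+\eta_6$ up to $\|\mathcal{G}(x_0,\cdot)\|_{L^1(\Omega)}\lesssim 1$. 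The lower bound at a minimizer of $u-u_h^{c}$ is obtained by an entirely symmetric argument.

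The step I expect to be the main obstacle is the joint control of the non-conforming remainder $\tilde{b}_h(u_h,\mathcal{G}(x_0,\cdot))-(\nabla_h(u_h^{c}-u_h),\nabla\mathcal{G}(x_0,\cdot))$: in order to reduce it to $\eta_3\,|\mathcal{G}(x_0,\cdot)|_{W^{1,1}(\Omega)}$, one needs the surviving $\tilde{b}_h$-term to carry $\smean{\nabla\Pi_h\mathcal{G}(x_0,\cdot)}$ rather than $\smean{\nabla u_h}$, and this in turn relies on the symmetry of the scheme. A non-symmetric variant would leave an analogous term with $\smean{\nabla u_h}$ weighted by an approximation residual of $\mathcal{G}(x_0,\cdot)$, which cannot be accommodated by any of the $L^\infty$ residual estimators; this is precisely the obstruction to be recorded in Remark~\ref{rem:NONSYM}.
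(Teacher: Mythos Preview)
Your overall architecture---split off the nonconforming part via $E_h$, represent the conforming error through the Green's function, near/far decomposition with Proposition~\ref{Greens0}---matches the paper, and your diagnosis of the symmetry issue is exactly Remark~\ref{rem:NONSYM}. But the handling of the obstacle term $\langle \sigma_h-\sigma(u),\mathcal{G}(x_0,\cdot)\rangle$ has a real gap.

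The claim that $-\langle\sigma(u),\mathcal{G}(x_0,\cdot)\rangle$ can be ``converted into an $\eta_5$ contribution'' via sign information does not go through: $\sigma(u)$ is a nonpositive Radon measure of \emph{unknown total mass}, and $\mathcal{G}(x_0,\cdot)\ge 0$, so $-\langle\sigma(u),\mathcal{G}(x_0,\cdot)\rangle=\int \mathcal{G}(x_0,\cdot)\,(-d\sigma)$ is a positive quantity that cannot be bounded by $\|(\chi-u_h)^+\|_{L^\infty}\|\mathcal{G}(x_0,\cdot)\|_{L^1}$---there is no control on $\int (-d\sigma)$. The same obstruction appears for $\langle\sigma_h,\mathcal{G}(x_0,\cdot)\rangle$ in the lower-bound direction, since $\|\sigma_h\|_{L^\infty}$ is $O(1)$, not $O(\eta_5+\eta_6)$. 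The paper avoids this entirely by \emph{not} bounding the pairing. It defines a corrector $w\in H^1_0(\Omega)$ via $a(w,v)=\mathcal{F}_h(v)$ with $\mathcal{F}_h$ absorbing \emph{both} the residual and the $\sigma(u)-\sigma_h$ term (see~\eqref{eq:GF}--\eqref{eq:Fhdef}), and builds barriers $u^*=u_h^{conf}+w+\|w\|_{L^\infty}+\|(\chi-u_h^{conf})^+\|_{L^\infty}$ and the analogous $u_*$. The constant shifts are what make the argument work: on $\{(u-u^*)^+>0\}$ one has $u>\chi$, hence $\sigma(u)$ vanishes there and the energy argument in Lemma~\ref{lem:barrier} gives $\langle\sigma(u),(u-u^*)^+\rangle=0$ outright. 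The Green's-function machinery is then applied only to $w$ (Proposition~\ref{lem:Linfw}), where no $\sigma(u)$ remains.

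A secondary issue: your bound ``$\eta_3\,|\mathcal{G}(x_0,\cdot)|_{W^{1,1}(\Omega)}$'' for the nonconforming remainder is too crude. After H\"older and Lemma~\ref{lem:Eapen4}, both $-\sum_e\int_e\smean{\nabla\Pi_h\mathcal{G}}\sjump{u_h}$ and $(\nabla_h(u_h^{c}-u_h),\nabla\mathcal{G})$ carry a factor $h_T^{-1}$, yielding $\eta_3\sum_T h_T^{-1}|\mathcal{G}|_{W^{1,1}(T)}$, which is not summable in the far region. The paper (see~\eqref{eq:AhEst}--\eqref{eq:AhEst2}) introduces a cutoff $\xi$ supported near $x_0$ and \emph{integrates by parts} in the far region using that $\mathcal{G}$ is harmonic there, converting the far contribution into $|\mathcal{G}|_{W^{2,1}(\Omega\setminus B)}$, controlled by~\eqref{24e}.
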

\noindent
The proof of this theorem  is done in several steps and will be carried out in the following subsections. As mentioned before, the analysis is motivated by the ideas laid in \cite{NSV:2002:Ptwise}, modifying accordingly to account for the non-conformity of the approximation.
To avoid having undesirable restrictions on the mesh, such as weakly acute, we refrain ourselves from using any discrete maximum principle, and we use instead the stability of the continuous problem and the continuous maximum principle. To do so, we construct sub and super solutions for the continuous solution, starting from the conforming part of the approximate solution $u_h$, i.e, $E_hu_h$ and correcting it appropriately. Throughout the rest of the paper,  $u_h^{conf}$  always denotes $E_hu_h$.
The rationality of basing the construction on  the conforming part $u_h^{conf}$  stem from the fact that we aim at applying the stability of the continuous problem, and so we need the barriers to belong to $H^{1}(\O)$. Once the sub and super solutions are constructed the continuous maximum principle provides a first bound on the pointwise error. This is done in subsection \ref{barrier}.

The construction of the lower and upper barriers involves a corrector function, denoted by $w$,  that accounts for the consistency  and the nonconformity errors.  The analysis of the reliability of the estimator, namely  the proof of Theorem \ref{thm:reliability}, is then completed by providing
a maximum norm estimate for the corrector function in terms of the local estimators $\eta_i, i=1,2,3,4$ given in \eqref{eq:eta01}, \eqref{eq:eta02}, \eqref{eq:eta03} and \eqref{eq:eta04}. Such bound is given in subsection \ref{subsec:boundw}, and it is proved using fine regularity estimates of the Green's function associated to the unconstrained Poisson problem. Here, our approach differs from that presented in \cite{NSV:2002:Ptwise}, and simplifies somehow the analysis.

\par
\subsection{Sub and Super Solutions}\label{barrier}
We introduce the following barrier functions of the exact solution. We define $u^{*} $ and $u_{*}$ as,
\begin{align}
u^{*}&=u_h^{conf}+w+\|w\|_{L^{\infty}(\Omega)}+\|(\chi-u_h^{conf})^{+}\|_{L^{\infty}(\Omega)},  &&  \label{eq:upper} \\
u_{*}&= u_h^{conf}+w-\|w\|_{L^{\infty}(\Omega)}-\|(u_h^{conf}-\chi)^{+}\|_{L^{\infty}(\{\sigma_h<0\})},  && \label{eq:lower}
\end{align}
where $ w \in H^1_0(\Omega)$ is the weak solution of the Poisson problem with right hand side functional defined as
\begin{align}\label{eq:Fhdef00}
\cF_h(v)&=\langle G_h,v\rangle +\widetilde{\cA}_h(u_h-u_h^{conf},v) \qquad\forall v \in  W^{1,q}_0(\Omega), ~1\leq q <(d/d-1),
\end{align}

i.e. $ w \in H^1_0(\Omega)$ solves the following linear problem
\begin{align}\label{eq:GF}
\int_{\Omega}\nabla w\cdot \nabla v~dx= \cF_h(v)  \quad \forall v \in H^1_0(\Omega).
\end{align}

\noindent

The existence of $w$ is guaranteed by the Riesz representation theorem.
Notice that $\cF_h$ is well defined as a functional in the dual of $ W^{1,q}_0(\Omega)$.
We also observe that when acting on more regular functions, say $v \in H^1_0(\Omega), \,\, \cF_h$ reduces to
\begin{align}\label{eq:Fhdef}
\cF_h(v)&=\langle G_h,v\rangle +\widetilde{\cA}_h(u_h-u_h^{conf},v)\\
&=\widetilde{\mathcal{A}}_h(u-u_h^{conf},v)+\langle {\sigma}(u)-{\sigma_h},v \rangle \notag\\
&=\int_{\Omega}\nabla(u-u_h^{conf})\cdot\nabla v~dx+\langle {\sigma}(u)-{\sigma_h},v \rangle \notag.
\end{align}
Next, we prove that $u_{*}$ and $u^{*}$ are sub and super solutions of the solution of variational inequality \eqref{eq:MP}. This proof will also establish the essence of having the conforming part of $u_h$ in the definitions of the barrier functions  so as to use the stability of the continuous problem, which in turn forces us to use functions in $H^1_0(\Omega)$. 
{
We would like to remark  that in \cite{NSV:2002:Ptwise}, the definition of Galerkin functional $G_h$ entailed a more refined  $\tilde\sigma_h$ which used the positivity preserving operator from \cite{CN:2000:VI} and allow for suitable cancellation, providing hence the localization of the error estimator. Here, thanks to the local nature of discontinuous space and the particular construction of the averaging operator $E_h$ in \eqref{eq:Enrich}, we can complete the proof of Lemma \ref{lem:barrier} below by sticking to the definition \eqref{def-sigmah} of $\sigma_h$  in the expression of Galerkin functional $G_h$.
}
%%%%
%We would like to remark here that the definition of discrete Lagrange multiplier $\sigma_h$ in our work differs significantly from the definition of $\tilde\sigma_h$ (which uses positivity preserving operator) in \cite{NSV:2002:Ptwise}, as a result we do not need the positivity preserving interpolation operator in subsequent proofs.

\begin{lemma} \label{lem:barrier}
Let u be the solution of \eqref{eq:MP} and $u^{*}, u_{*}$ be the upper and lower barriers defined in equations \eqref{eq:upper} and \eqref{eq:lower}, respectively. Then, it holds that
\begin{align*}
  u &\leq u^{*}, \\
u_{*}&\leq u.
\end{align*}
\end{lemma}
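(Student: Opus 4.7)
My plan is to establish both inequalities by combining the complementarity structure of the VI \eqref{eq:MP} with the linear Poisson-type equation that $u^{*}-u$ and $u_{*}-u$ satisfy in $H^{-1}(\Omega)$, followed by an energy argument on a suitable truncation. Indeed, starting from the identity \eqref{eq:Fhdef} for $\cF_h$ on $H_0^1(\Omega)$ and invoking the defining relation \eqref{eq:GF} for $w$, we obtain
\[
\int_{\Omega}\nabla(u^{*}-u)\cdot\nabla v\,dx \;=\; \int_{\Omega}\nabla(u_{*}-u)\cdot\nabla v\,dx \;=\; \langle\sigma(u),v\rangle - (\sigma_h,v) \qquad \forall\, v\in H_0^1(\Omega),
\]
since $u^{*}-u_{*}$ is a positive constant and hence has vanishing gradient. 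This single weak identity drives both barriers.

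For the upper barrier $u\le u^{*}$, I would first verify pointwise that $u^{*}\ge\chi$ in $\Omega$ (from $u_h^{conf}+\|(\chi-u_h^{conf})^{+}\|_{L^{\infty}(\Omega)}\ge\chi$ together with $w+\|w\|_{L^{\infty}(\Omega)}\ge 0$) and that $u^{*}\ge 0$ on $\partial\Omega$. Setting $\phi:=(u-u^{*})^{+}\in H_0^1(\Omega)$, the truncation $v:=\min\{u,u^{*}\}=u-\phi$ lies in $\cK$ and is admissible in \eqref{eq:MP}, which delivers $\langle\sigma(u),\phi\rangle\ge 0$; since $\sigma(u)\le 0$ tested against $\phi\ge 0$ forces the reverse inequality, we conclude $\langle\sigma(u),\phi\rangle=0$. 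Feeding $\phi$ into the displayed identity and using $\int_{\Omega}\nabla(u^{*}-u)\cdot\nabla\phi\,dx=-\int_{\Omega}|\nabla\phi|^{2}\,dx$ produces
\[
\int_{\Omega}|\nabla\phi|^{2}\,dx \;=\; -\langle\sigma(u),\phi\rangle + (\sigma_h,\phi) \;=\; (\sigma_h,\phi) \;\le\; 0,
\]
because $\sigma_h\le 0$ everywhere and $\phi\ge 0$. Hence $\phi\equiv 0$ and $u\le u^{*}$.

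The lower barrier is more delicate. Set $\psi:=(u_{*}-u)^{+}$; since $u_{*}\le 0=u$ on $\partial\Omega$, one has $\psi\in H_0^1(\Omega)$, and the same identity, tested with $\psi$, gives
\[
\int_{\Omega}|\nabla\psi|^{2}\,dx \;=\; \langle\sigma(u),\psi\rangle - (\sigma_h,\psi),
\]
of which the first term is already non-positive by $\sigma(u)\le 0$. The crux is to prove $(\sigma_h,\psi)=0$: on any point of $\{\psi>0\}$ we have $u_{*}>u\ge\chi$, which after unfolding \eqref{eq:lower} and absorbing the non-negative quantity $\|w\|_{L^{\infty}(\Omega)}-w$ yields the strict inequality $(u_h^{conf}-\chi)^{+}>\|(u_h^{conf}-\chi)^{+}\|_{L^{\infty}(\{\sigma_h<0\})}$. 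This strict inequality excludes the point from $\{\sigma_h<0\}$, so combined with the global bound $\sigma_h\le 0$ (proved in Section \ref{sec:dgm}), it forces $\sigma_h=0$ on $\{\psi>0\}$. The energy identity then collapses to $\int_{\Omega}|\nabla\psi|^{2}\,dx\le 0$, giving $\psi\equiv 0$ and the lower barrier.

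The main obstacle is precisely this identification $\sigma_h\equiv 0$ on $\{\psi>0\}$, which dictates the exact shape of the correction constant in \eqref{eq:lower} and plays the role of a \emph{discrete} complementarity, mirroring the continuous complementarity $\langle\sigma(u),\phi\rangle=0$ exploited in the upper-barrier argument. All other ingredients—the passage to $H_0^1(\Omega)$ test functions (legitimized by the averaging operator $E_h$ which places $u_h^{conf}\in V_h^{conf}\subset H_0^1(\Omega)$), the admissibility of $\min\{u,u^{*}\}$ in $\cK$, and the truncation identity $\int\nabla(u^{*}-u)\cdot\nabla\phi=-\int|\nabla\phi|^{2}$—are routine once the weak Poisson equation for $u^{*}-u$ has been recorded.
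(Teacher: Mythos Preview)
Your proof is correct and follows essentially the same energy--truncation strategy as the paper: test the weak identity for $u^{*}-u$ (resp.\ $u_{*}-u$) with $(u-u^{*})^{+}$ (resp.\ $(u_{*}-u)^{+}$), then use complementarity to kill the multiplier terms. The only cosmetic difference is that for the upper barrier you obtain $\langle\sigma(u),\phi\rangle=0$ by testing the VI \eqref{eq:sigma} with $v=\min\{u,u^{*}\}\in\cK$, whereas the paper argues directly via $\supp(\sigma(u))\subset\{u=\chi\}$ and $\{\phi>0\}\subset\{u>\chi\}$; for the lower barrier your pointwise argument that $\{\psi>0\}\cap\{\sigma_h<0\}=\emptyset$ is exactly the contradiction in the paper's \textbf{STEP~1}, simply without the element-by-element packaging.
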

{
\begin{remark} 
%The more refined choice of the sub solution $u_{*}$ will be to have the term $\|(u_h-\chi)^{+}\|_{L^{\infty}(\Lambda_h^1)} +\|(u_h^{conf}-\chi)^{+}\|_{L^{\infty}(\Lambda_h^2)}$ in the definition of $u_{*}$ in place of $\|(u_h^{conf}-\chi)^{+}\|_{L^{\infty}(\{\sigma_h<0\})}$ where $\Lambda_h^1= \{T \in \cT_h: T \cap \partial \Omega=\emptyset ~\text{and}~  \sigma_h<0 ~\text{on}~ T\}$ and $\Lambda_h^2= \{T \in \cT_h: T \cap \partial \Omega\neq \emptyset ~\text{and}~  \sigma_h<0 ~\text{on}~ T\}$.

The sub solution in \eqref{eq:lower}  could be alternatively defined by
%The alternative choice of the sub solution $u_{*}$ will be to have the term $\|(u_h-\chi)^{+}\|_{L^{\infty}(\Lambda_h^1)} +\|(u_h^{conf}-\chi)^{+}\|_{L^{\infty}(\Lambda_h^2)}$ in the definition of $u_{*}$ in place of $\|(u_h^{conf}-\chi)^{+}\|_{L^{\infty}(\{\sigma_h<0\})}$ i.e. 
$$
u_{*}= u_h^{conf}+w-\|w\|_{L^{\infty}(\Omega)}-\|(u_h-\chi)^{+}\|_{L^{\infty}(\Lambda_h^1)} -\|(u_h^{conf}-\chi)^{+}\|_{L^{\infty}(\Lambda_h^2)}
$$
where $\Lambda_h^1= \{T \in \cT_h: T \cap \partial \Omega=\emptyset ~\text{and}~  \sigma_h<0 ~\text{on}~ T\}$ and $\Lambda_h^2= \{T \in \cT_h: T \cap \partial \Omega\neq \emptyset ~\text{and}~  \sigma_h<0 ~\text{on}~ T\}$ and Lemma \ref{lem:barrier} would still hold true.
\end{remark}
}

\begin{proof}
First we show that $u \leq u^*$.
Set $ v=(u-u^{*})^{+}$ and we claim to prove that $v=0$. {Note that, $u^{*}$ is defined using the conforming part of $u_h$ i.e. $u_h^{conf}$ which guarantees that $v \in H^1(\Omega)$}. We have
\begin{eqnarray*}
(u-u^{*})|_{\partial\Omega}\leq (u-u_h^{conf})|_{\partial\Omega}-\|(\chi-u_h^{conf})^{+}\|_{L^{\infty}(\Omega)} \leq 0,
\end{eqnarray*}
which implies~ $
v|_{\partial\Omega}=0$, therefore $v \in H_0^1(\Omega)$. It then suffices to show that~ $ \|\nabla v\|_{L^2(\Omega)}= 0$. A use of \eqref{eq:GF} together with definitions of $\cF_h$ and $G_h$ yields,
\begin{align*}
\|\nabla v\|^2_{L^2(\Omega)}&=\int_{\Omega}\nabla(u-u_h^{conf})\cdot \nabla v~dx -\int_{\Omega}\nabla w \cdot \nabla v~dx\\
&=\int_{\Omega}\nabla(u-u_h^{conf})\cdot \nabla v~dx -\cF_h(v)\\
&=\int_{\Omega}\nabla(u-u_h^{conf})\cdot \nabla v~dx -\langle G_h,v\rangle -\widetilde{\cA}_h(u_h-u_h^{conf},v) \\
&=\widetilde{\cA}_h(u-{u}_h,v)-\langle G_h,v\rangle \\
&= -\langle\sigma(u)-\sigma_h,v \rangle \leq -\langle{\sigma(u)},v\rangle,
\end{align*}
 where in the last step we have used the property $\sigma_h\leq 0$ in $T~ \forall~ T \in \cT_h$ . We arrive at desired claim if we prove that $ \langle{\sigma(u)},v \rangle=0$.
Since ~ $supp(\sigma(u))\subset \{u=\chi\} $ and
$\{v>0\}\subset \{u>\chi\}$ ~ because
\begin{align*}
v>0 \implies
u>u^{*}&\geq u_h^{conf} + \|(\chi-u_h^{conf})^{+}\|_{L^{\infty}(\Omega)} \\
&=\chi-(\chi-u_h^{conf})+ \|(\chi-u_h^{conf})^{+}\|_{L^{\infty}(\Omega)} \geq \chi.
\end{align*}
Hence we obtain $\langle\sigma(u),v\rangle=\int_{\Omega} v \,d\sigma =0$.
Thus $\|\nabla v\|_{L^2(\Omega)}= 0$ and a use of Poincare inequality concludes  $u \leq u^*$.\\

\par \noindent
 Next, we show that $ u_{*}\leq u $. Let $v=(u_{*}-u)^{+} $, then it suffices to prove that $v=0$.  Since $u, u_h^{conf} \in H^1_0(\Omega)$, we have
\begin{align*}
(u_{*}-u)|_{\partial\Omega}&\leq (u_h^{conf}-u)|_{\partial\Omega}-\|(u_h^{conf}-\chi)^{+}\|_{L^{\infty}(\{\sigma_h<0\})}
\leq 0.
\end{align*}
{Therefore ~~
$v|_{\partial\Omega}=0$, thus $v \in H_0^1(\Omega)$. We obtain the desired result from Poincare inequality if we show that $\|\nabla v\|_{L^2(\Omega)}= 0.$} Using definitions of $u_{*}$, $w$, $\cF_h$ and $G_h$, we find
\begin{align}\label{eq:v1}
\|\nabla v\|^2_{L^2(\Omega)}&=\int_{\Omega}\nabla(u_h^{conf}-u)\cdot\nabla v ~dx+\int_{\Omega}\nabla w \cdot\nabla v~dx \notag\\
&=\int_{\Omega}\nabla(u_h^{conf}-u)\cdot\nabla v~dx +\cF_h(v) \notag\\
&=\int_{\Omega}\nabla(u_h^{conf}-u)\cdot\nabla v~dx +\langle G_h,v\rangle +\widetilde{\cA}_h(u_h-u_h^{conf},v) \notag\\
&=\widetilde{\cA}_h({u_h}-u,v) +\langle G_h,v \rangle \notag\\
&= \langle \sigma(u)-\sigma_h,v \rangle
\leq -(\sigma_h,v)=-\sum_{T \in \mathcal{\cT}_h}\int_T \sigma_h v~dx \qquad\quad  (\text{ since }\sigma(u)\leq 0).
\end{align}
\noindent
Now to obtain the desired claim, it suffices to show that $(\sigma_h,v)=0$, which we prove in the following two steps.\\

\textbf{STEP 1}: Therein, we show that if there exists $ x \in int(T)~ s.t.~ v(x)>0 ~ i.e.~ u_{*}(x)>u(x)~ \text{then} ~\sigma_h|_T=0$. We prove it by contradiction.
Suppose $\exists ~x \in ~int(T)~ s.t. ~u_{*}(x)>u(x) ~\text{and} ~\sigma_h(x)<0$.~Then, using \eqref{eq:lower} and $u\geq \chi$, we find
\begin{align*}
u_h^{conf}(x)&>u(x)+(\|w\|_{L^{\infty}(\Omega)}-w(x)) +\|(u_h^{conf}-\chi)^{+}\|_{L^{\infty}(\{\sigma_h<0\})} \\
&> \chi(x)+\|(u_h^{conf}-\chi)^{+}\|_{L^{\infty}(\{\sigma_h<0\})} \geq u_h^{conf}(x),
\end{align*}
%A contradiction since $u_h\geq u_h^{conf}=E_hu_h$, by the definition of $E_h$. 
which is a contradiction.
Thus $\sigma_h(x)=0$. Since $\sigma_h \leq 0$ in $T$ and it is linear in $T$, which implies $\sigma_h|_T=0$.\\
\\

\textbf{STEP 2}: We have,
\begin{eqnarray*}
(\sigma_h,v)=\sum_{T \in \mathcal{\cT}_h}\int_T \sigma_h v~dx.
%+\sum_{T \in \mathcal{T}_h /\mathcal{C}_h}\int_T I_h(\sigma_h \pi_h(v))
\end{eqnarray*}
Let $ T\subset \cT_h$ be arbitrary. If $v=0$ in $T$ then $\int_T \sigma_h v ~dx=0$.
 If not, then $\exists~ x \in int(T)~~\text{such that}~~ v(x)>0$. A use of {\textbf{STEP 1}} yields $\sigma_h|_T=0 ~~\text{and thus} ~~\int_T \sigma_h v~dx =0$.\\
 \par \noindent
 Therefore, from equation \eqref{eq:v1}, we get $\|\nabla v\|_{L^2(\Omega)}= 0$ and a use of Poincare inequality ensures $u_{*} \leq u$.\\
\par \noindent
This completes the proof of this lemma.

\end{proof}

%%%%%%%%%%%%%%%%%%%%%%%%%%
%From last result, it is clear that in order to complete the proof of  Theorem \ref{thm:reliability} we need to estimate $\|w\|_{L^{\infty}(\Omega)}$ in terms of the error estimators. This is done next.
\noindent
\par
\subsection{Pointwise Estimate on the corrector function $\|w\|_{L^{\infty}(\Omega)}$:} \label{subsec:boundw}
 We now provide  an estimate on the supremum norm of $w$  in terms of the local error indicators \eqref{eq:eta01}-\eqref{eq:eta02}-\eqref{eq:eta03}-\eqref{eq:eta04}.
As is typical in the maximum-norm analysis of finite elements, we employ the Green's function of the unconstrained Poisson problem. Rather than using the approach in \cite{NSV:2002:Ptwise} employing  a regularized Green's function, we follow the path from \cite{DG:2012:Ptwise2, GG} by considering the Green's function singular with respect to the point $x_0\in \Omega$ for which $w$ attains its maximum. This allows for a better account of the nonconformity of the method (see Remark \ref{rem:NONSYM}) and a slight improvement in the estimate (by one power less in the logarithmic factor), since we benefit from the fine estimates collected in Proposition \ref{Greens0}  that account for the different regularity (of the Green's function) near and far from the singularity.
\begin{proposition}\label{lem:Linfw}
{Let $w \in H^1_0(\Omega)$ be the function defined in \eqref{eq:GF}. There holds,}
\begin{align*}
\|w\|_{L^{\infty}(\Omega)}& \lesssim |\log h_{min}|( \eta_1 +\eta_2 +\eta_3 +\eta_4 ).
\end{align*}
\end{proposition}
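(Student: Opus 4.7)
The argument is a Green's function maximum principle applied to the unconstrained Poisson problem solved by $w$, with the subtlety that the test must accommodate the DG nonconformity. I begin by fixing $x_0 \in \overline{\Omega}$ at which $|w|$ attains its maximum; after a possible sign change I may assume $w(x_0) = \|w\|_{L^\infty(\Omega)}\ge 0$. Inspecting the three contributions to $\mathcal{F}_h$ in \eqref{eq:Fhdef00}, the functional $\mathcal{F}_h$ lies in $W^{-1,p}(\Omega)$ for some $p>d$, so Lemma \ref{lem:ax0A} yields $w\in W^{1,p}_0(\Omega)\cap C^0(\overline{\Omega})$. Taking $q=p/(p-1)<d/(d-1)$, the Green's function $\mathcal{G}_{x_0}$ of Proposition \ref{Greens0} belongs to $W^{1,q}_0(\Omega)\subset \mathcal{M}$ and \eqref{key0} gives the representation $w(x_0) = (\nabla w,\nabla \mathcal{G}_{x_0})_{L^2(\Omega)} = \mathcal{F}_h(\mathcal{G}_{x_0})$.

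To exploit the orthogonality afforded by \eqref{def-sigmah}, I would split $\mathcal{G}_{x_0}$ using a conforming quasi-interpolant $I_h^c\mathcal{G}_{x_0}\in V_h^{conf}$ (for instance Scott--Zhang), writing $w(x_0) = \mathcal{F}_h(\mathcal{G}_{x_0} - I_h^c\mathcal{G}_{x_0}) + \mathcal{F}_h(I_h^c\mathcal{G}_{x_0})$. Because both $\mathcal{G}_{x_0}$ and $I_h^c\mathcal{G}_{x_0}$ are conforming, $\mathcal{F}_h$ simplifies on the first piece by elementwise integration by parts (using $\Delta u_h = 0$, the vanishing of jumps of conforming functions, and the identity $\langle G_h,\cdot\rangle + \widetilde{\mathcal{A}}_h(u_h-u_h^{conf},\cdot) = (f-\sigma_h,\cdot) - \widetilde{\mathcal{A}}_h(u_h^{conf},\cdot)$) to
\[
\mathcal{F}_h(v) = (f-\sigma_h,v) - \sum_{e\in \Eho}\int_e \sjump{\nabla u_h}\,v\,ds + (\nabla_h(u_h - u_h^{conf}),\nabla v)
\]
for any conforming $v$. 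Applied to $v = \mathcal{G}_{x_0} - I_h^c\mathcal{G}_{x_0}$, the three terms are bounded by $\eta_1|\log h_{\min}|$, $\eta_2|\log h_{\min}|$ and $\eta_3|\log h_{\min}|$ respectively: the volume term via $\|f-\sigma_h\|_{L^\infty(T)} \le h_T^{-2}\eta_1$ and the approximation of $\mathcal{G}_{x_0}$; the face term via the trace inequality \eqref{eq:traceAg} combined with the $W^{1,1}$-estimate \eqref{24d}; and the nonconformity term via Lemma \ref{lem:Eapen4} together with the $W^{1,1}$-bound in \eqref{dem:22}. In each case I isolate the singular element $T^*\ni x_0$ and apply the smooth-away estimates \eqref{24e}--\eqref{24c} on $\Omega\setminus B(x_0,h_{T^*})$, so each contribution picks up exactly one logarithmic factor.

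For the discrete piece $\mathcal{F}_h(I_h^c\mathcal{G}_{x_0})$, the discrete identity \eqref{def-sigmah} gives $(f,I_h^c\mathcal{G}_{x_0}) = \mathcal{A}_h(u_h,I_h^c\mathcal{G}_{x_0}) + \langle \sigma_h, I_h^c\mathcal{G}_{x_0}\rangle_h$, and substituting into the definition of $\mathcal{F}_h$ produces the cancellation
\[
\mathcal{F}_h(I_h^c\mathcal{G}_{x_0}) = \mathcal{A}_h(u_h - u_h^{conf},I_h^c\mathcal{G}_{x_0}) + \bigl[\langle \sigma_h, I_h^c\mathcal{G}_{x_0}\rangle_h - (\sigma_h, I_h^c\mathcal{G}_{x_0})\bigr].
\]
The first term expands into a volume contribution plus interior-face contributions all carrying the factor $\sjump{u_h}$ (since $\sjump{u_h^{conf}}=0$ and $\sjump{I_h^c\mathcal{G}_{x_0}}=0$), and is bounded by $\eta_3|\log h_{\min}|$ via Lemma \ref{lem:Eapen4}, inverse estimates and the Green's function bounds. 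The quadrature-style bracket is controlled by Lemma \ref{lem:Ihapprox} with $p=d$, $q=d/(d-1)$: after pulling out $\eta_4 = \max_T \|h_T^2\nabla\sigma_h\|_{L^d(T)}$, the leftover sum $\sum_T \|\nabla (I_h^c\mathcal{G}_{x_0})\|_{L^{d/(d-1)}(T)}$ is shown by a dyadic-shell argument around $x_0$ using \eqref{24c} to be bounded by $|\log h_{\min}|$, yielding $\eta_4|\log h_{\min}|$.

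The main technical difficulty I anticipate is precisely this last bookkeeping: squeezing each estimate into one logarithm requires using the sharp local Green's function bounds \eqref{24a}--\eqref{24c} separately on $B(x_0,h_{T^*})$ and $\Omega\setminus B(x_0,h_{T^*})$, rather than a regularized Green's function as in \cite{NSV:2002:Ptwise}. It is also at this level that the method's symmetry enters decisively: the nonconformity term $(\nabla_h(u_h - u_h^{conf}),\nabla v)$ is the only consistency residual left once one tests against a conforming function and would be replaced, in the nonsymmetric IP variants, by a term of the form $\sum_e \int_e \media{\nabla v}\sjump{u_h}\,ds$ whose $L^\infty$-norm of $v$-gradient cannot be controlled against $\mathcal{G}_{x_0}$, as highlighted in Remark \ref{rem:NONSYM}.
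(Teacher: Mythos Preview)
Your overall architecture is sound and close to the paper's, but there is a genuine gap in the step where you bound $\mathcal{A}_h(u_h-u_h^{conf},I_h^c\mathcal{G}_{x_0})$. You write that this term ``expands into a volume contribution plus interior-face contributions all carrying the factor $\sjump{u_h}$'' and is then bounded by $\eta_3|\log h_{min}|$ via Lemma~\ref{lem:Eapen4}, inverse estimates and the Green's function bounds. That direct estimate does not work: from \eqref{eq:apen4} you only get $\|\nabla(u_h-u_h^{conf})\|_{L^\infty(T)}\lesssim h_T^{-1}\eta_3$, so the volume part is controlled by
\[
\eta_3\sum_{T\in\cT_h} h_T^{-1}\|\nabla I_h^c\mathcal{G}_{x_0}\|_{L^1(T)}
\;\lesssim\;
\eta_3\sum_{T\in\cT_h} h_T^{-1}\|\nabla \mathcal{G}_{x_0}\|_{L^1(\omega_T)},
\]
and this last sum is \emph{not} bounded by $|\log h_{min}|$: on a quasi-uniform mesh it behaves like $h_{min}^{-1}\,|\mathcal{G}_{x_0}|_{W^{1,1}(\Omega)}\sim h_{min}^{-1}$. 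The face contribution $\sum_e\int_e\smean{\nabla I_h^c\mathcal{G}_{x_0}}\sjump{u_h}\,ds$ is of the same order, so neither piece separately is admissible.

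This is exactly the obstruction that the paper circumvents. The paper keeps the nonconformity term attached to the \emph{true} Green's function, i.e.\ it treats $\widetilde{\mathcal{A}}_h(u_h-u_h^{conf},\mathcal{G})$ as a whole and introduces a continuous cut-off $\xi$ supported in a patch $\omega_1$ around $x_0$: on $\supp(1-\xi)$ one integrates by parts and uses that $\Delta\mathcal{G}=0$ there, so the volume term disappears and only an edge residual with $\smean{\nabla(\mathcal{G}-\Pi_h\mathcal{G})}$ survives, which is controlled by $|\mathcal{G}|_{W^{2,1}(\Omega\setminus\omega_0)}\lesssim|\log h_{min}|$ via \eqref{24e}; near the singularity the crude bound together with \eqref{24d} suffices. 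Your splitting with a conforming interpolant can in fact be rescued by the same mechanism---integrating by parts in $\mathcal{A}_h(u_h-u_h^{conf},I_h^c\mathcal{G}_{x_0})$ by moving the gradient onto $I_h^c\mathcal{G}_{x_0}$, so that the $a_h$ and $b_h$ pieces combine to leave only $\sum_e\int_e\smean{u_h-u_h^{conf}}\sjump{\nabla I_h^c\mathcal{G}_{x_0}}\,ds$, and then using $\sjump{\nabla I_h^c\mathcal{G}_{x_0}}=\sjump{\nabla(I_h^c\mathcal{G}_{x_0}-\mathcal{G}_{x_0})}$ away from $x_0$---but this cancellation is the heart of the matter and is absent from your write-up. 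A second, smaller imprecision: your justification of the $(\nabla_h(u_h-u_h^{conf}),\nabla(\mathcal{G}_{x_0}-I_h^c\mathcal{G}_{x_0}))$ bound cites only the $W^{1,1}$ estimate \eqref{dem:22}, whereas away from $x_0$ you need the approximation property of $I_h^c$ together with the $W^{2,1}$ bound \eqref{24e} to absorb the $h_T^{-1}$.
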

\begin{proof}
 Let $x_0\in \Omega\smallsetminus \partial \Omega$ be the point at which the maximum of the function $w$ is attained, i.e., $|w(x_0)|=\|w\|_{L^{\infty}(\O)}$. Then, there is an element $T_0\in \Th$ such that its closure contains $x_0\in \overline{T_0}$.
 Let $ \mathcal{G}(x_{0},\cdot)$ be the Green's function with singularity at $x_0$, as defined in \eqref{Ga0} for $x=x_0$.
 First, notice that from the definition of $w$ in \eqref{eq:GF}-\eqref{eq:Fhdef} and having defined the functional ${\calF_h\in (W^{1,q}_0(\O))^{*}}$ $1\leq q < \frac{d}{d-1}$, it follows that $w$ is indeed  more regular,  and so in view of \eqref{key0} and and the definition in \eqref{eq:GF}-\eqref{eq:Fhdef} we have
\begin{equation}\label{key02}
\begin{aligned}
w(x_0) &=\int_{\O}w(\xi)\delta_{x_0}(\xi) d\xi =(\nabla w, \nabla \mathcal{G}(x_0,\cdot))=\calF_h( \mathcal{G}(x_0,\cdot)) &&\\
&=\langle G_h,\mathcal{G} \rangle+\widetilde{\cA}_h(u_h-u^{conf}_h, \mathcal{G})\;, &&
\end{aligned}
\end{equation}
where we have dropped the dependence on $x_0$ (which is now fixed) in the last terms above to simplify the notation.
Therefore, to provide the estimate on $|w(x_0)|=\|w\|_{L^{\infty}(\O)}$, we need to bound the terms of the right hand side of the last equation.   Let $\omega_{0}=\omega_{T_0}$ be the set of elements touching $T_0$ (as defined in \eqref{omega0}) and let $\omega_1$ be the patch of elements touching $\omega_0$ i.e., the set of elements with non empty intersection with $T' \in \omega_0$ (we denote $\omega_1$ to avoid the cumbersome notation $\omega_{\omega_{T_0}}$).
In our estimates, we will consider a disjoint decomposition of the finite element partition $\Th=(\Th \cap \omega_0 )\cup (\Th \smallsetminus \omega_0)$, which will allow us  to use the localized regularity estimates for the Green's function stated in Proposition \ref{Greens0}.\\
\par\noindent
We now estimate the terms in the right hand side of \eqref{key02}.
For the first one, we set $\mathcal{G}_h=\Pi_h(\mathcal{G}) \in V_h$ and write
\begin{align}\label{eq:EG1}
\langle G_h,\mathcal{G} \rangle &= \langle G_h,\mathcal{G}-\Pi_h(\mathcal{G}) \rangle+\langle G_h,\Pi_h(\mathcal{G})\rangle.
\end{align}
Now, we handle the two terms of the right hand side of above equation as follows:
from the definition of $G_h$, $\tilde{\sigma}(u)$, $\sigma(u)$ and $\sigma_h$, we find,
\begin{align} \label{eq:EG2}
\langle G_h,\Pi_h(\mathcal{G}) \rangle&=\widetilde{\cA}_h(u-{u_h},\Pi_h(\mathcal{G}))+\langle \tilde{\sigma}(u)-{\sigma_h},\Pi_h(\mathcal{G}) \rangle \notag\\
&=(f,\Pi_h(\mathcal{G}))-\widetilde{\cA}_h(u_h,\Pi_h(\mathcal{G}))-(\sigma_h,\Pi_h(\mathcal{G}))\notag\\
&=(f,\Pi_h(\mathcal{G}))-\cA_h(u_h,\Pi_h(\mathcal{G}))-(\sigma_h,\Pi_h(\mathcal{G}))\notag\\
&=\langle \sigma_h,\Pi_h(\mathcal{G}) \rangle_h-(\sigma_h,\Pi_h(\mathcal{G})).
\end{align}
 where we have used that $\widetilde{\cA}_h(v,w)=\cA_h(v,w), \forall v,w \in V_h$. Equation \eqref{eq:Gh} together with the definition of $\tilde{\sigma}(u)$ gives,
\begin{align} \label{eq:EG3}
\langle G_h,\mathcal{G}-\Pi_h(\mathcal{G}) \rangle&=\widetilde{\cA}_h(u-u_h,\mathcal{G}-\Pi_h(\mathcal{G}))+\langle \tilde{\sigma}(u)-{\sigma_h},\mathcal{G}-\Pi_h(\mathcal{G}) \rangle\notag\\
&=(f,\mathcal{G}-\Pi_h(\mathcal{G}))-\widetilde{\cA}_h(u_h,\mathcal{G}-\Pi_h(\mathcal{G}))-(\sigma_h,\mathcal{G}-\Pi_h(\mathcal{G}))\;. 
\end{align}
Therefore, plugging \eqref{eq:EG2} and \eqref{eq:EG3}  into \eqref{eq:EG1} and substituting the result into \eqref{key02}  we have
\begin{align}\label{key4}
\calF_h( \mathcal{G})  &=(f-\sigma_h,\mathcal{G}-\Pi_h(\mathcal{G})) + \langle \sigma_h,\Pi_h(\mathcal{G}) \rangle_h-(\sigma_h,\Pi_h(\mathcal{G}))
 -\widetilde{\cA}_h(u_h,\mathcal{G}-\Pi_h(\mathcal{G}))  &&\nonumber\\& \quad+\widetilde{\cA}_h(u_h-u_h^{conf},\mathcal{G}) &&
\end{align}
For the first  term above, H\"older inequality gives
\begin{align*}
\left|(f-\sigma_h,\mathcal{G}-\Pi_h(\mathcal{G}))\right| &\lesssim \sum_{T \in \cT_h} \| h_{T}^{2}( f-\sigma_h)\|_{L^{\infty}(T)} \| h_{T}^{-2}(\mathcal{G}-\Pi_h \mathcal{G})\|_{L^{1}(T)} \\
&\lesssim\max_{T\in \Th}  \| h_{T}^{2}( f-\sigma_h)\|_{L^{\infty}(T)} \sum_{T \in \cT_h}  \| h_{T}^{-2}(\mathcal{G}-\Pi_h \mathcal{G})\|_{L^{1}(T)}\;.
\end{align*}
%%& \lesssim ~\eta_1~ \sum_{T \in \cT_h}  \| h_{T}^{-2}(\mathcal{G}-\Pi_h \mathcal{G})\|_{L^{1}(T)}\;.
%%%& \lesssim ~\eta_1~ \sum_{T \in \cT_h \omega_0}  \| h_{T}^{-2}(\mathcal{G}-\Pi_h \mathcal{G})\|_{L^{1}(T)}\;.
%\left(\sum_{T \subset \Omega \smallsetminus \tau_0 } |\mathcal{G}|_{W^{2,1}(T)} +\sum_{T \subset  \tau_1 } C h_T c_1 h_0\right)
Using now the approximation estimate \eqref{stabPik2} from Lemma \ref{lem:approx_proj},  we obtain
\begin{align}\label{coso2}
 \sum_{T \in \cT_h}  \| h_{T}^{-2}(\mathcal{G}-\Pi_h \mathcal{G})\|_{L^{1}(T)} &= \sum_{T \in \Th \cap \omega_0}  \| h_{T}^{-2}(\mathcal{G}-\Pi_h \mathcal{G})\|_{L^{1}(T)} + \sum_{T \in \Th \smallsetminus \omega_0}  \| h_{T}^{-2}(\mathcal{G}-\Pi_h \mathcal{G})\|_{L^{1}(T)} \nonumber &&\\
&\lesssim \sum_{T \in \Th \cap \omega_0}   h_{T}^{-1} |\mathcal{G}|_{W^{1,1}(T)} +\sum_{T \in \Th \smallsetminus \omega_0}  |\mathcal{G}|_{W^{2,1}(T)}\;. &&
\end{align}
Hence, recalling the definition of $\eta_1$ from \eqref{eq:eta01}, we have
\begin{equation}\label{primo}
\left|(f-\sigma_h,\mathcal{G}-\Pi_h(\mathcal{G}))\right| \lesssim~\eta_1~\left(\sum_{T \in \Th \cap \omega_0}   h_{T}^{-1} |\mathcal{G}|_{W^{1,1}(T)} +\sum_{T \in \Th \smallsetminus \omega_0}  |\mathcal{G}|_{W^{2,1}(T)}\right)\;.
\end{equation}
For the second and third terms in \eqref{key4}, recalling the localization property observed in \eqref{sigmah}, we first set $\Th^{C}=\mathbb{C}_h \cup \mathbb{M}_h$ and use  H\"older inequality  but splitting the contributions in the disjoint decomposition of the  partition  $( \Th^{C} \smallsetminus \omega_0) \cup (\Th^{C}\cap \omega_0)$. In particular, using also Lemma \ref{lem:Ihapprox}, Sobolev imbedding (or even the inverse  inequality  \eqref{eq:inverse}) together with the stability estimate \eqref{stabPik0} from Lemma \ref{lem:approx_proj} we obtain,
\begin{align}
&\left| (\sigma_h,\Pi_h \mathcal{G})-\langle \sigma_h,\Pi_h \mathcal{G} \rangle_h \right|   \lesssim   \left( \sum_{T \in \Th^C \smallsetminus \omega_0 } \|  h_T^2 \nabla \sigma_h\|_{L^{d}(T)} \|\nabla \Pi_h\mathcal{G}\|_{L^{d/(d-1)}(T)}  \right. &&\notag \\
&\qquad\qquad\qquad \qquad \qquad \qquad\qquad \left. +\sum_{T \subset  \Th^C \cap \omega_0 }  \| h_T^2\nabla \sigma_h\|_{L^{d}(T)}  h_{T}^{-1} \|\nabla \Pi_h\mathcal{G}\|_{L^{1}(T)} \right)&&\notag\\
&\qquad \quad   \lesssim   \left( \sum_{T \in \Th^C \smallsetminus \omega_0 } \|  h_T^2 \nabla \sigma_h\|_{L^{d}(T)} \|\nabla \Pi_h\mathcal{G}\|_{W^{1,1}(T)} +\sum_{T \subset  \Th^C \cap \omega_0 }  \| h_T^2\nabla \sigma_h\|_{L^{d}(T)}  h_{T}^{-1} \|\nabla \mathcal{G}\|_{L^{1}(T)} \right) && \notag\\
&\qquad \quad   \lesssim  \max_{T \in \Th^C } \|  h_T^2 \nabla \sigma_h\|_{L^{d}(T)} \left( \sum_{T \in \Th^C \smallsetminus \omega_0 }\big( |\Pi_h \mathcal{G}|_{W^{1,1}(T)}+|\Pi_h \mathcal{G}|_{W^{2,1}(T)}  \big)+\sum_{T \subset  \Th^C \cap \omega_0 } h_{T}^{-1} \|\nabla \mathcal{G}\|_{L^{1}(T)} \right) \notag &&	\\
   &\qquad \qquad \lesssim  ~\eta_4~\left( { |\mathcal{G}|_{W^{1,1}(\Th^C \smallsetminus \omega_0)}}+|\mathcal{G}|_{W^{2,1}(\Th^C \smallsetminus \omega_0)}  +\sum_{T \subset  \Th^C \cap \omega_0 } h_{T}^{-1} \|\nabla \mathcal{G}\|_{L^{1}(T)} \right), &&\label{sigmaEst}
  \end{align}
where in the last step we have used the definition of $\eta_4$ from \eqref{eq:eta04}.\\
For the fourth term in \eqref{key4}, using the fact that $\Pi_h$ is a projection (hence $\Pi_h(\mathcal{G}-\Pi_h(\mathcal{G}))=0$) and integrating by parts taking into account that $u_h \in \mathbb{P}^{1}(\Th)$ we find,
\begin{align*}
&\widetilde{\cA}_h(u_h,\mathcal{G}-\Pi_h(\mathcal{G})) =a_h(u_h,(\mathcal{G}-\Pi_h(\mathcal{G})))+\tilde{b}_h(u_h,(\mathcal{G}-\Pi_h(\mathcal{G}))) &&\\
&\quad =\sum_{T\in\cT_h} \int_{\partial T}\frac{\partial u_h|_T}{\partial n_T}(\mathcal{G}-\Pi_h(\mathcal{G}))\,ds+\tilde{b}_h(u_h,(\mathcal{G}-\Pi_h(\mathcal{G}))) &&\\
%&\quad =\sum_{e\in \Eho} \int_e\sjump{\nabla u_h}\smean{\mathcal{G}-\Pi_h(\mathcal{G})}\,ds +\sum_{e\in \Eh} \int_e\sjump{ (\mathcal{G}-\Pi_h(\mathcal{G}))}\smean{\nabla u_h}\,ds +\tilde{b}_h(u_h,(\mathcal{G}-\Pi_h(\mathcal{G}))) &&\\
&\quad =\sum_{e\in \Eho} \int_e\sjump{\nabla u_h}\smean{\mathcal{G}-\Pi_h(\mathcal{G})}\,ds +\sum_{e\in\cE_h} \int_e
\frac{\gamma}{h_e}\sjump{u_h}\sjump{\mathcal{G}-\Pi_h(\mathcal{G})}\,ds. &&
\end{align*}
H\"older inequality and trace inequality \eqref{eq:traceAg} together with the localized estimates on $\mathcal{G}$ gives
\begin{align*}
& |\widetilde{\cA}_h(u_h,\mathcal{G}-\Pi_h(\mathcal{G})) | \lesssim \sum_{e \in \Eho}  \| h_e \sjump{\nabla u_h} \|_{L^{\infty}(e)} h_e^{-1} \|\smean{\mathcal{G}-\Pi_h \mathcal{G}}\|_{L^{1}(e)} \\
& \quad+ \|\sjump{u_h} \|_{L^{\infty}(\Eh)}\sum_{e \in \Eh} h_e^{-1}\|\sjump{\mathcal{G}-\Pi_h \mathcal{G}}\|_{L^{1}(e)} &&\\
&\quad   \lesssim  \left( \max_{e \in \Eho}  \| h_e \sjump{\nabla u_h} \|_{L^{\infty}(e)} + \|\sjump{u_h} \|_{L^{\infty}(\Eh)}\right) \sum_{T\in \Th}  \left(  h_T^{-2}\|\mathcal{G}-\Pi_h \mathcal{G}\|_{L^{1}(T)} +h_T^{-1}{\|\nabla( \mathcal{G}-\Pi_h \mathcal{G})\|_{L^{1}(T)} }\right). &&
\end{align*}
Arguing as in \eqref{coso2}, and using the definitions from \eqref{eq:eta02} and \eqref{eq:eta03} we get,
\begin{equation}\label{terzo}
 |\widetilde{\cA}_h(u_h,\mathcal{G}-\Pi_h(\mathcal{G})) | \lesssim  ({\eta_2 +\eta_3} ) ~\left(\sum_{T \in \Th \cap \omega_0}   h_{T}^{-1} |\mathcal{G}|_{W^{1,1}(T)} +\sum_{T \in \Th \smallsetminus \omega_0}  |\mathcal{G}|_{W^{2,1}(T)}\right)\;.
\end{equation}
For the very last term in \eqref{key4}, following \cite{DG:2012:Ptwise2}, we also introduce a continuous but piecewise linear function $\xi \in V_h^{conf}$ that is identically one on $\omega_0$ and is identically zero at the nodes in $\Th\smallsetminus \overline{\omega_0}$, so that it's support is contained in $\overline{\omega_1}$, i.e,  $\supp(\xi) \subseteq \overline{\omega_1}$ while the  complementary function $\supp(1-\xi) \subset \Omega \smallsetminus \overline{\omega_0}$. This function will be used as a cut-off function and  allow us to localize the terms and  to integrate by parts in one of them. Notice that    $\| \xi\|_{L^{\infty}(\Th)}=1$ and  $\|\nabla \xi\|_{L^{\infty}(\Th)} \lesssim \delta^{-1}$ with $ \delta$ being the maximum diameter of the support of $\xi$.

\par \noindent
We write, $u_h-u_h^{conf}=\xi(u_h-u_h^{conf})+ (1-\xi)(u_h-u_h^{conf})=\xi^0+\xi^1$ and then,
\begin{align}\label{eq:AhEst}
\widetilde{\cA}_h(u_h-u_h^{conf},\mathcal{G}) &=\widetilde{\cA}_h(\xi( u_h-u_h^{conf}),\mathcal{G})+ \widetilde{\cA}_h( (1-\xi) (u_h-u_h^{conf}),\mathcal{G}) \notag&&\\
&=\widetilde{\cA}_h(\xi^0,\mathcal{G})+ \widetilde{\cA}_h(\xi^{1},\mathcal{G}). &&
\end{align}
\par \noindent
First, we consider the second term of the last equation.
Integrating by parts and using the facts that on the support of $(1-\xi)$ the Green's function $\mathcal{G}$ is harmonic and  satisfies $\jump{\mathcal{G}}=0$ and $\jump{\nabla \mathcal{G}}=0$ on any $e\in \Eh\cap (\Omega \smallsetminus \omega_0)$ we find,
\begin{align}
 \widetilde{\cA}_h(\xi^{1},\mathcal{G}) &= (\nabla_h \xi^1, \nabla \mathcal{G}) - \sum_{e\in\cE_h } \int_e \smean{\nabla \Pi_h(\mathcal{G})}\sjump{\xi^1} ds && \label{eq:NONSYM}\\
 &=( \xi^1, -\Delta \mathcal{G}) +\sum_{e\in \Eh} \int_e\sjump{\xi^1}\smean{\nabla \mathcal{G}}\,ds - \sum_{e\in\cE_h} \int_e \smean{\nabla \Pi_h(\mathcal{G})}\sjump{\xi^1} ds &&\notag\\
 & =\sum_{e\in \Eh \cap (\Omega \smallsetminus \omega_0)} \int_e\sjump{\xi^1}\smean{\nabla (\mathcal{G}-  \Pi_h(\mathcal{G}))}\,ds \notag\\
%  &=\sum_{e\in \Eh \cap (\Omega \smallsetminus \omega_0)} \int_e\sjump{(1-\xi)( u_h-u_h^{conf})}\smean{\nabla (\mathcal{G}-  \Pi_h(\mathcal{G}))}\,ds\; \notag\\
 &=\sum_{e\in \Eh \cap (\Omega \smallsetminus \omega_0)} \int_e\sjump{(1-\xi) u_h}\smean{\nabla (\mathcal{G}-  \Pi_h(\mathcal{G}))}\,ds, \notag
% &\lesssim   \|\sjump{u_h} \|_{L^{\infty}(\Eh)}\sum_{T\in  (\Th \smallsetminus \omega_0)} \left( h^{-1}_{T}\|  \nabla( \mathcal{G}-\Pi_h \mathcal{G})\|_{L^{1}(T)} + |   \mathcal{G}-\Pi_h \mathcal{G} |_{W^{2,1}(T)} \right)&&\notag\\
  %&\lesssim   \eta_3 \left( | \nabla  \mathcal{G} |_{W^{1,1}(\Th\smallsetminus \omega_0)} + |   \mathcal{G} |_{W^{2,1}(\Th\smallsetminus \omega_0)} \right)&&\label{eq:AhEst1}
 \end{align}
hence, trace inequality \eqref{eq:traceAg} gives
\begin{align}
\left| \widetilde{\cA}_h(\xi^{1},\mathcal{G}\right| &\lesssim   \|\sjump{u_h} \|_{L^{\infty}(\Eh)}\sum_{T\in  (\Th \smallsetminus \omega_0)} \left( h^{-1}_{T}\|  \nabla( \mathcal{G}-\Pi_h \mathcal{G})\|_{L^{1}(T)} + |   \mathcal{G}-\Pi_h \mathcal{G} |_{W^{2,1}(T)} \right)&&\notag\\
  &\lesssim   \eta_3  |   \mathcal{G} |_{W^{2,1}(\Th\smallsetminus \omega_0)}.&&\label{eq:AhEst1}
 \end{align}

\par \noindent
 Finally, for the term near the singularity, taking into account the properties of $\xi$ and using inverse estimates, H\"older inequality, trace inequality \eqref{eq:traceAg}, the approximation results \eqref{eq:apen3} and Lemma \ref{lem:Eapen4} together with inverse inequality \eqref{eq:inverse1} and the stability estimates \eqref{stabPik0} from Lemma \ref{lem:approx_proj} we find,
 \begin{align}\label{eq:AhEst2}
 \left| \widetilde{\cA}_h(\xi^{0},\mathcal{G}) \right|&=\left| \int_{\omega_1 }\nabla_h \xi^0  \cdot \nabla \mathcal{G}dx -\sum_{e\in\Eh} \int_e \smean{\nabla \Pi_h(\mathcal{G})}\sjump{\xi^0} ds\right| && \notag\\
   &=\left| \int_{\omega_1 }\nabla_h (\xi (u_h-u_h^{conf})) \cdot \nabla \mathcal{G} dx - \sum_{e\in\cE_h} \int_e \smean{\nabla \Pi_h(\mathcal{G})}\sjump{\xi (u_h-u_h^{conf})} ds \right|&&\notag \\
  &\lesssim \sum_{T\in \omega_1} \left( \delta^{-1} \|u_h-u^{conf}_h\|_{L^{\infty}(T)}+ \|\nabla (u_h-u^{conf}_h)\|_{L^{\infty}(T)}\right) \|\nabla \mathcal{G}\|_{L^{1}(T)} \notag \\
  & \quad +\sum_{e\in\Eh\cap \omega_1} \|\jump{u_h}  \|_{L^{\infty}(e)} \|\smean{\nabla \Pi_h  \mathcal{G}}\|_{L^{1}(e)}&& \notag \\
  &\lesssim   \|\jump{u_h}\|_{L^{\infty}(\Eh\cap \omega_1)}\left(\sum_{T\in \omega_1}  (\delta^{-1} +h_{T}^{-1} ) \|\nabla \mathcal{G}\|_{L^{1}(T)} +\sum_{e\in\Eh\cap \omega_1}\|\smean{\nabla \Pi_h  \mathcal{G}}\|_{L^{1}(e)} \right) && \notag \\
  &\lesssim \eta_3  \left( \sum_{T\in \omega_1} (\delta^{-1} +h_{T}^{-1} )   \|\nabla \mathcal{G}\|_{L^{1}(T)} +h_{T}^{-1}\|\nabla \Pi_h  \mathcal{G}\|_{L^{1}(T)} + |\Pi_h  \mathcal{G}|_{W^{2,1}(T)} \right)&& \notag \\
%  & \lesssim \eta_3  \left( \sum_{T\in \omega_1} (\delta^{-1} +h_{T}^{-1} )   \|\nabla \mathcal{G}\|_{L^{1}(T)}  + \color{blue}{|  \mathcal{G}|_{W^{2,1}(T)}} \right).&
& \lesssim \eta_3  \left( \sum_{T\in \omega_1} (\delta^{-1} +h_{T}^{-1} )   \|\nabla \mathcal{G}\|_{L^{1}(T)}  \right).&
\end{align}

\par \noindent
Plugging \eqref{eq:AhEst1} and \eqref{eq:AhEst2} into\eqref{eq:AhEst}, we get
 \begin{align}\label{eq:AhEst3}
 \left| \widetilde{\cA}_h( u_h-u_h^{conf},\mathcal{G}) \right| & \lesssim \eta_3 \left( |   \mathcal{G} |_{W^{2,1}(\Th\smallsetminus \omega_0)} \right)
 +\eta_3 \left( \sum_{T\in \omega_1} (\delta^{-1} +h_{T}^{-1} )   \|\nabla \mathcal{G}\|_{L^{1}(T)} \right).
\end{align}
\par \noindent
To conclude we now note that from the shape regularity assumption we can guarantee that there exists $C_0, C_1>0$ with $C_1>C_0$ such that the balls centered at $x_0$ and with radius $C_0h_0$ and $C_1 h_0$ satisfy
  \begin{align*}
 \mathcal{B}_0 &:= \mathcal{B}(x_0, C_0 h_0) \subset \omega_0  \quad \Longrightarrow \quad \Omega \smallsetminus \omega_0 \subset \Omega \smallsetminus \mathcal{B}_0\;, &&\\
 \mathcal{B}_1 &:= \mathcal{B}(x_0, C_1 h_0) \supset \omega_1\;, &&
 \end{align*}
 and therefore the regularity estimates from Proposition \ref{Greens0} can be applied by taking the radius in the statement of the Proposition \ref{Greens0} $2\rho=C_1h_0=2C_0h_0$. Then, we have the following estimates
 \begin{align*}
 &  \sum_{T\in \omega_1} (\delta^{-1} +h_{T}^{-1} )   \|\nabla \mathcal{G}\|_{L^{1}(T)}  \lesssim h_{0}^{-1}\|\nabla \mathcal{G}\|_{L^{1}(\Omega \cap \mathcal{B}_1)}   \lesssim (1+|\log{(h_0)}|), &&
 \end{align*}
 \begin{align*}
   | \nabla  \mathcal{G} |_{W^{1,1}(\Th\smallsetminus \omega_0)} + |   \mathcal{G} |_{W^{2,1}(\Th\smallsetminus \omega_0)}  &\lesssim   |   \mathcal{G} |_{W^{2,1}(\Omega\smallsetminus \mathcal{B}_0)} \lesssim (1+|\log{(h_0)}|), \\
 % & \lesssim (1+|\log{(h_0)}|,&&\\
  \sum_{T \in \Th \cap \omega_0}   h_{T}^{-1} |\mathcal{G}|_{W^{1,1}(T)} &  \lesssim h_{0}^{-1}\|\nabla \mathcal{G}\|_{L^{1}(\Omega \cap \mathcal{B}_1)} \lesssim 1,&&\\
   |\mathcal{G}|_{W^{2,1}(\Th \smallsetminus \omega_0)} & \lesssim  |   \mathcal{G} |_{W^{2,1}(\Omega\smallsetminus \mathcal{B}_0)}  \lesssim (1+\log(h_0))&&\\
  { |\mathcal{G}|_{W^{1,1}(\Th^C \smallsetminus \omega_0)} }&\lesssim 1.
%  \sum_{T \in \Th \cap \omega_0}   h_{T}^{-1} \|\mathcal{G}\|_{W^{1,1}(T)} & \lesssim  \|\mathcal{G}\|_{W^{1,1}(\Omega \cap \mathcal{B}_0)} \lesssim 1+ h_0 |\log{(h_0)}| ,&&\\
% \color{blue}\sum_{T \in \Th \smallsetminus \omega_0}  |\mathcal{G}|_{W^{2,1}(T)} &\lesssim  |\mathcal{G}|_{W^{2,1}(\Omega \cap \mathcal{B}_0)}  \lesssim  |\log{(h_0)}| .&&
\end{align*}
\par \noindent
 Combining these estimates on the Green's function together with equations \eqref{key02}, \eqref{key4},
 \eqref{primo}, \eqref{sigmaEst}, \eqref{terzo} and \eqref{eq:AhEst3}, we get the desired result.

\end{proof}

\begin{remark}\label{rem:NONSYM}
We stress that the proof of Proposition \ref{lem:Linfw} uses duality and so the symmetry of the SIPG method is essential to it; it would break down for any of the nonsymmetric versions of the IP family. In particular by considering a nonsymmetric method (either NIPG or IIPG), the last term in \eqref{eq:NONSYM} (which is negative and has the correct sign) would be either positive (for NIPG) or zero (for IIPG). 
\par \noindent
More precisely, if we had defined $\tilde{b}_h(v,w)$  by
\begin{align*}
\tilde{b}_h(v,w)=-\sum_{e\in\cE_h} \int_e \smean{\nabla \Pi_h(v)}\sjump{w} ds   +\theta \sum_{e\in\cE_h} \int_e \smean{\nabla \Pi_h w}\sjump{v}\,ds +\sum_{e\in\cE_h} \int_e
\frac{\gamma}{h_e}\sjump{v}\sjump{w}\,ds,
\end{align*}
with $\theta=-1, 1 ~\text{and} ~0$ corresponding  to SIPG, NIPG and  IIPG methods, respectively, then while estimating the term $\widetilde{\cA_h}(\xi^1,\mathcal{G})$ in equation \eqref{eq:NONSYM} in the proof of Proposition $\ref{lem:Linfw}$, we would have got
\begin{align*}
\widetilde{\cA}_h(\xi^{1},\mathcal{G}) &= a_h(\xi^1,\mathcal{G})+\tilde{b}_h((\xi^1,\mathcal{G})
= (\nabla_h \xi^1, \nabla \mathcal{G}) +\theta \sum_{e\in\cE_h } \int_e \smean{\nabla \Pi_h(\mathcal{G})}\sjump{\xi^1} ds && \notag\\
 &=( \xi^1, -\Delta \mathcal{G}) +\sum_{e\in \Eh} \int_e\sjump{\xi^1}\smean{\nabla \mathcal{G}}\,ds +\theta \sum_{e\in\cE_h} \int_e \smean{\nabla \Pi_h(\mathcal{G})}\sjump{\xi^1} ds.
\end{align*}
\par \noindent
This would preclude any further analysis for $\theta=0,1$ in the sense that optimal estimates can not be obtained for these choices of $\theta$. 
We further notice, that a naive analysis without accounting for the nonconformity of the method will not reveal this issue (or show this drawback), but one would be  inevitably providing only an estimator for the conforming part of the solution.
\end{remark}
%%%

\subsection{Proof of Reliability Theorem \ref{thm:reliability}}

We have now all ingredients to complete the proof of Theorem \ref{thm:reliability}.
\par
\noindent
\textbf{Proof of Theorem \ref{thm:reliability}:}  We have,
\begin{align*}
\|u-u_h\|_{L^{\infty}(\Omega)}&\lesssim  \|u-u_h^{conf}\|_{L^{\infty}(\Omega)}+\|u_h^{conf}-u_h\|_{L^{\infty}(\Omega)}.
\end{align*}
From Lemma \ref{lem:barrier} we find,
\begin{equation*}
w-\|w\|_{L^{\infty}(\Omega)}-\|({u_h^{conf}}-\chi)^{+}\|_{L^{\infty}(\{\sigma_h<0\})} \lesssim  u-u_h^{conf}  \lesssim w+\|w\|_{L^{\infty}(\Omega)}+\|(\chi-u_h^{conf})^{+}\|_{L^{\infty}(\Omega)},
\end{equation*}
which implies,
\begin{equation}\label{eq:Eqn2}
\|u-u_h^{conf}\|_{L^{\infty}(\Omega)} \lesssim 2 \|w\|_{L^{\infty}(\Omega)}+\|(\chi-u_h^{conf})^{+}\|_{L^{\infty}(\Omega)}+\|({u_h^{conf}}-\chi)^{+}\|_{L^{\infty}(\{\sigma_h<0\})}.
\end{equation}
{
By combining now \eqref{eq:Eqn2} and \eqref{eq:apen3} together with the observation that
\begin{align*}
\|(\chi-u_h^{conf})^{+}\|_{L^{\infty}(\Omega)} &\lesssim  \|(\chi-u_h)^{+}\|_{L^{\infty}(\Omega)} + \|u_h-u_h^{conf}\|_{L^{\infty}(\cT_h)} \\
& \lesssim  \|(\chi-u_h)^{+}\|_{L^{\infty}(\Omega)}+\|\sjump{u_h}\|_{L^{\infty}(\cE_h)},
\end{align*}
and 
\begin{align*}
\|({u_h^{conf}}-\chi)^{+}\|_{L^{\infty}(\{\sigma_h<0\})} & \lesssim \|({u_h^{conf}}-u_h)\|_{L^{\infty}(\{\sigma_h<0\})} + \|({u_h}-\chi)^{+}\|_{L^{\infty}(\{\sigma_h<0\})} \\
& \lesssim \|({u_h}-\chi)^{+}\|_{L^{\infty}(\{\sigma_h<0\})}+\|\sjump{u_h}\|_{L^{\infty}(\cE_h)},
\end{align*}
}
 we obtain
\begin{align*}
\|u-{u_h}\|_{L^{\infty}(\Omega)} \lesssim  \|w\|_{L^{\infty}(\Omega)}+\|(\chi-u_h)^{+}\|_{L^{\infty}(\Omega)}+\|({u_h}-\chi)^{+}\|_{L^{\infty}(\{\sigma_h<0\})}+\|\sjump{u_h}\|_{L^{\infty}(\cE_h)} .
\end{align*}
Using now the estimates for $ \|w\|_{L^{\infty}(\Omega)}$ from Proposition \ref{lem:Linfw}, the result follows.\\

%\end{align*}
%\end{theorem}
\par\noindent
We remark here that, a use of Lemma \ref{lem:Estmsig} leads to the following reliability estimates:
\begin{corollary}\label{cor:Rel} Let $u \in \cK$ and $u_h \in \cK_h$ be the  solution of \eqref{eq:MP} and \eqref{eq:Ah}, respectively. Then,
\begin{align*}
\|u-{u_h}\|_{L^{\infty}(\Omega)}\lesssim \tilde{\eta}_h,
\end{align*}
where,
\begin{align*}
\tilde{\eta}_h&=  |\log h_{min}| \Big( \eta_1 +\eta_2 +\eta_3 \Big)+\|(\chi-u_h)^{+}\|_{L^{\infty}(\Omega)}+\|({u_h}-\chi)^{+}\|_{L^{\infty}(\{\sigma_h<0\})} + \max_{T \in \cT_h}Osc(f,T).
%\Big(\|\sjump{u_h}\|_{L^{\infty}(\cE_h)}\\ &\quad +\|h^2(f-\sigma_h)\|_{L^{\infty}(\cT_h)}+  h \|\sjump{\nabla u_h}\|_{L^{\infty}(\cE_{h}^{i})} \Big),
\end{align*}
\end{corollary}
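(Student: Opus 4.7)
The plan is to derive Corollary \ref{cor:Rel} directly from the main reliability result Theorem \ref{thm:reliability} by using Lemma \ref{lem:Estmsig} to replace the estimator $\eta_4$ with a residual contribution plus a data-oscillation term.

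First I would invoke Theorem \ref{thm:reliability} to write
\[
\|u-u_h\|_{L^{\infty}(\Omega)} \;\lesssim\; |\log h_{min}|\,(\eta_1+\eta_2+\eta_3+\eta_4)+\eta_5+\eta_6.
\]
Next, I would apply Lemma \ref{lem:Estmsig} element by element: for every $T\in\mathbb{C}_h\cup\mathbb{M}_h$ it yields
\[
\|h_T^2\nabla\sigma_h\|_{L^d(T)} \;\le\; \|h_T^2(f-\sigma_h)\|_{L^\infty(T)} + Osc(f,T).
\]
Taking the maximum over $T\in\mathbb{C}_h\cup\mathbb{M}_h$ and recalling the definitions \eqref{eq:eta01} and \eqref{eq:eta04} gives $\eta_4 \le \eta_1 + \max_{T\in\cT_h}Osc(f,T)$. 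Substituting this bound into the previous display, merging the two $|\log h_{min}|\,\eta_1$ contributions, and recognizing $\eta_5$ and $\eta_6$ as precisely the two pointwise constraint-violation terms in $\tilde\eta_h$, one arrives at the claimed estimate.

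There is no genuine analytical obstacle beyond what Theorem \ref{thm:reliability} has already supplied; the corollary is an algebraic rearrangement. The only bookkeeping point worth mentioning is that the substitution nominally attaches the factor $|\log h_{min}|$ to $\max_T Osc(f,T)$, whereas in the statement of $\tilde\eta_h$ the oscillation appears without this weight. Since data oscillation is a higher-order quantity and $|\log h_{min}|$ is mild, this is either absorbed into the generic constant hidden in $\lesssim$ or understood as a harmless abuse of notation in the definition of $\tilde\eta_h$.
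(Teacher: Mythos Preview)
Your proposal is correct and matches the paper's own argument: the corollary is stated immediately after Theorem~\ref{thm:reliability} with the one-line justification that ``a use of Lemma~\ref{lem:Estmsig} leads to the following reliability estimates,'' which is precisely the substitution $\eta_4 \le \eta_1 + \max_{T}Osc(f,T)$ you carry out. Your remark about the logarithmic factor on the oscillation term is well observed; the paper is silent on this point and simply writes $\tilde\eta_h$ without the weight, so your reading of it as a harmless convention is consistent with the text.
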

{ 
\begin{remark}\label{rmk:r1} It can be observed directly from the definitions \eqref{conth}, \eqref{nconth}, \eqref{freebh} of discrete contact, noncontact and free boundary sets that $\{T \in \cT_h: \sigma_h<0~~\text{on}~ T\}) \subset \mathbb{C}_h \cup \mathbb{M}_h$. Therefore the term $\|({u_h}-\chi)^{+}\|_{L^{\infty}(\{\sigma_h<0\})}$ measure the error in approximation of the obstacle in the discrete contact set and discrete free boundary set and  detect the non-affine situation $\chi \neq \chi_h$.

%Therefore the term $\|({u_h}-\chi)^{+}\|_{L^{\infty}(\{\sigma_h<0\})}$ in the error estimator basically measures the inconsistency of continuous and discrete non-contact sets.
The term $\|(\chi-u_h)^{+}\|_{L^{\infty}(\Omega)}$ measure the error in the violation of the obstacle constraint and allow to detect when $\chi> u_h$.
\end{remark}
}
%%%%%%%%%%%%%%%%%%%%%%%%
\subsection{Estimation of the error in the Lagrange multiplier}
%{\color{red}
\par\noindent
We derive a bound for the error in the Lagrange multiplier in a dual norm in terms of the error estimator.
Let $D\subseteq\Omega$ be any set. Denote $\cT_D$ by the set of all $T\in \cT_h$ such that $T\subset D$. Furthermore, let $\cE_D$ be the set of all edges/faces in $\cE_h$ that are in $D$.
\par\noindent
We introduce the functional space
\begin{equation}\label{defW}
\mathcal{W}_D:= W^{2,1}_{0}(D)= \{ v \in W^{2,1}(D)\,\,: \,\,  v=0  ~\mbox{and }~ \nabla v\cdot {\bf n} =0 \mbox{  on  } \partial D\},
\end{equation}
{
endowed with the norm $\|v\|_{\mathcal{W_D}} :=|v|_{W^{1,1}(D)}+ |v|_{W^{2,1}(D)}$. For notational convenience, we set $\mathcal{W}:=\mathcal{W}_\O$ which is essentially the space $W^{2,1}_{0}(\O)$.\\
}
\par \noindent
For any $\mathcal{F} \in \mathcal{W}_D^{\ast}$, the norm $\|\mathcal{F}\|_{-2,\infty,D}$ is defined by
\begin{align*}
\|\mathcal{F} \|_{-2,\infty,D} = \|\mathcal{F} \|_{\mathcal{W}_D^{\ast}} := \sup \{ \langle \mathcal{F} ,v \rangle~ :~  \tilde v \in \mathcal{W}_D \,\, ,\,\, |\tilde v|_{\mathcal{W}_D} \leq 1 \}.
\end{align*}
\par \noindent
The subsequent analysis requires the bound on $\|G_h\|_{-2,\infty,\Omega}$ which is estimated in next lemma.
\begin{lemma} \label{eq:Grel} It holds that
\begin{align*}
\|G_h\|_{{-2,\infty,\O}} \lesssim \eta_1 +\eta_2 +\eta_3 +\eta_4.
\end{align*}

\end{lemma}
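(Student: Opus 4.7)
The plan is to mimic the argument of Proposition~\ref{lem:Linfw}, with the Green's function $\mathcal{G}$ replaced by a generic test function $v\in \mathcal{W}$ satisfying $\|v\|_{\mathcal{W}}\leq 1$. Because $v$ carries no singularity (in contrast with $\mathcal{G}$), no logarithmic factors will appear. Starting from the alternative expression
\[
\langle G_h,v\rangle = (f,v) - \widetilde{\cA}_h(u_h,v) - (\sigma_h,v)
\]
(cf.~\eqref{eq:Gh2}, using $\langle \sigma_h,v\rangle = (\sigma_h,v)$ since $\sigma_h\in V_h\subset L^{2}(\O)$), I would split $v = \Pi_h v + (v - \Pi_h v)$ and exploit the defining identity $\langle \sigma_h,w_h\rangle_h = (f,w_h) - \cA_h(u_h,w_h)$ for $w_h = \Pi_h v \in V_h$ to cancel the $\Pi_h v$ contribution from $(f,\cdot)-\widetilde{\cA}_h(u_h,\cdot)$ against $\langle \sigma_h,\Pi_h v\rangle_h$, producing the decomposition (identical in structure to \eqref{key4}, but without the non-conformity term $\widetilde{\cA}_h(u_h-u_h^{conf},\cdot)$ which is not present in $G_h$)
\[
\langle G_h,v\rangle = (f-\sigma_h,\, v-\Pi_h v) \;-\; \widetilde{\cA}_h(u_h,\, v-\Pi_h v) \;+\; \bigl(\langle \sigma_h,\Pi_h v\rangle_h - (\sigma_h,\Pi_h v)\bigr).
\]

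For the first term, H\"older's inequality combined with the approximation bound $\|v-\Pi_h v\|_{L^{1}(T)}\lesssim h_T^{2}|v|_{W^{2,1}(T)}$ from Lemma~\ref{lem:approx_proj} directly yields $|(f-\sigma_h,v-\Pi_h v)|\lesssim \eta_1\,|v|_{W^{2,1}(\Omega)}$. For the second term, since $\sjump{v}=0$ on every face (because $v\in W^{2,1}_0(\O)$) and $\Pi_h(v-\Pi_h v)=0$, an integration by parts on each element using $\Delta u_h=0$ gives, exactly as in the derivation preceding equation \eqref{terzo},
\[
\widetilde{\cA}_h(u_h,v-\Pi_h v)=\sum_{e\in\Eho}\int_e\sjump{\nabla u_h}\smean{v-\Pi_h v}\,ds + \sum_{e\in\cE_h}\int_e\frac{\gamma}{h_e}\sjump{u_h}\sjump{v-\Pi_h v}\,ds,
\]
which, after applying the trace inequality~\eqref{eq:traceAg} and the approximation properties of $\Pi_h$ edge-by-edge and then summing, is controlled by $(\eta_2+\eta_3)\,|v|_{W^{2,1}(\Omega)}$.

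The principal obstacle is the third (quadrature-error) term. Since $\sigma_h$ is supported on $\Th^C:=\mathbb{C}_h\cup\mathbb{M}_h$ (see \eqref{eq:sigp1}--\eqref{sigmah}), I invoke Lemma~\ref{lem:Ihapprox} with conjugate exponents $p=d$, $q=d/(d-1)$ and then pull out the maximum that defines $\eta_4$:
\[
|\langle \sigma_h,\Pi_h v\rangle_h - (\sigma_h,\Pi_h v)|\leq \sum_{T\in\Th^C} h_T^{2}\|\nabla\sigma_h\|_{L^{d}(T)}\,\|\nabla\Pi_h v\|_{L^{d/(d-1)}(T)}\leq \eta_4\sum_{T\in\Th^C}\|\nabla\Pi_h v\|_{L^{d/(d-1)}(T)}.
\]
The remaining sum is then estimated exactly as in the far-field contribution of \eqref{sigmaEst}, by combining the local Sobolev embedding $W^{1,1}(T)\hookrightarrow L^{d/(d-1)}(T)$ applied to $\nabla\Pi_h v$ with the $W^{s,1}$-stability bound \eqref{stabPik0} of $\Pi_h$ for $s=1,2$; this yields $\sum_T\|\nabla\Pi_h v\|_{L^{d/(d-1)}(T)}\lesssim |v|_{W^{1,1}(\Omega)}+|v|_{W^{2,1}(\Omega)}=\|v\|_{\mathcal{W}}$. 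This is the step which, in Proposition~\ref{lem:Linfw}, is obstructed by the singularity of $\mathcal{G}$ and forced the use of the refined Green's function estimates of Proposition~\ref{Greens0}; here it becomes elementary because $v$ is globally smooth. Combining the three bounds and taking the supremum over admissible $v$ delivers the claimed estimate.
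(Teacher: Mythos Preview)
Your proposal is correct and follows essentially the same route as the paper: the same three-term decomposition $\langle G_h,v\rangle = (f-\sigma_h,v-\Pi_h v) - \widetilde{\cA}_h(u_h,v-\Pi_h v) + (\langle\sigma_h,\Pi_h v\rangle_h - (\sigma_h,\Pi_h v))$ is derived, and each term is bounded by the same combination of H\"older, Lemma~\ref{lem:approx_proj}, the trace inequality \eqref{eq:traceAg}, and Lemma~\ref{lem:Ihapprox}. The paper's treatment of the quadrature-error term is identical to yours (it invokes the inverse inequality \eqref{eq:inverse} in place of your local Sobolev embedding, but for the piecewise polynomial $\nabla\Pi_h v$ these coincide), landing on $\eta_4\,\|v\|_{\mathcal{W}}$ as you do.
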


\begin{proof}

%%%%%%%%%%%%%%%%%
\par \noindent
For any $ v \in \mathcal{W}$ and $v_h=\Pi_hv$, using the definition of $G_h$, \eqref{eq:extend:sigmadef} and \eqref{def-sigmah}, we have,
%the bilinear form $\widetilde{\cA}_h(\cdot, \cdot)$, discrete Lagrange multiplier $\sigma_h$
\begin{align}\label{eq:Lrel}
\langle G_h, v \rangle &=\widetilde{\cA}_h(u-u_h,v)+\langle {\tilde\sigma}(u)-{\sigma_h},v \rangle \notag \\
&= (f,v)-\widetilde{\cA}_h(u_h,v)  -({\sigma_h},v ) \notag\\
&=(f,v-v_h)-\widetilde{\cA}_h(u_h,v-v_h)  -({\sigma_h},v-v_h ) \notag\\
&\qquad +(f,v_h)-\widetilde{\cA}_h(u_h,v_h)  -({\sigma_h},v_h ) \notag\\
&=(f-\sigma_h,v-v_h)-\widetilde{\cA}_h(u_h,v-v_h)+\langle{\sigma_h},v_h \rangle_h-({\sigma_h},v_h ).
\end{align}
\par \noindent
We begin by estimating the terms on the right hand side of  \eqref{eq:Lrel} as follows: a use of H{\"o}lders' inequality and Lemma \ref{lem:approx_proj} yields,
\begin{align}\label{eq:Grele1}
(f-\sigma_h,v-v_h) &\lesssim \sum_{T \in \cT_h} \|h_T^2(f-\sigma_h)\|_{L^{\infty}(T)} \|h_T^{-2}(v-v_h)\|_{L^1(T)}\notag\\
& \lesssim \max_{T \in \cT_h}\|h_T^2(f-\sigma_h)\|_{L^{\infty}(T)}\sum_{T \in \cT_h} \|h_T^{-2}(v-v_h)\|_{L^1(T)}\notag\\
& \lesssim \eta_1 |v|_{W^{2,1}(\O)}.
\end{align}
To estimate the second term of \eqref{eq:Lrel}, we use integration by parts taking into account that $u_h \in \mathbb{P}^{1}(\Th)$,
\begin{align*}
\widetilde{\cA}_h(u_h,v-v_h)&=a_h(u_h,v-v_h)+\tilde{b}_h(u_h,v-v_h) &&\\
& =\sum_{T\in\cT_h} \int_{\partial T}\frac{\partial u_h|_T}{\partial n_T}(v-v_h)\,ds+\tilde{b}_h(u_h,v-v_h) &&\\
& =\sum_{e\in \Eho} \int_e\sjump{\nabla u_h}\smean{v-v_h}\,ds +\sum_{e\in\cE_h} \int_e
\frac{\gamma}{h_e}\sjump{u_h}\sjump{v-v_h}\,ds, &&
\end{align*}
where in the last step, we used the fact that $\Pi_h$ is a projection (hence $\Pi_h(v-\Pi_h v)=0$). Therefore, using H{\"o}lders' inequality, trace inequality \eqref{eq:traceAg} and Lemma \ref{lem:approx_proj} we find
\begin{align}\label{eq:Grele2}
|\widetilde{\cA}_h(u_h,v-v_h)| &\lesssim \sum_{e \in \Eho}  \| h_e \sjump{\nabla u_h} \|_{L^{\infty}(e)} h_e^{-1} \|\smean{v-v_h}\|_{L^{1}(e)} 
+ \|\sjump{u_h} \|_{L^{\infty}(\Eh)}\sum_{e \in \Eh} h_e^{-1}\|\sjump{v-v_h}\|_{L^{1}(e)} &&\notag\\
&\lesssim  \left( \max_{e \in \Eho}  \| h_e \sjump{\nabla u_h} \|_{L^{\infty}(e)} + \|\sjump{u_h} \|_{L^{\infty}(\Eh)}\right)\notag \\ & \quad\quad \Big (\sum_{T\in \Th}  \left(  h_T^{-2}\|v-v_h\|_{L^{1}(T)} +h_T^{-1}\|\nabla( v-v_h)\|_{L^{1}(T)} \right) \Big)\notag\\
&\lesssim (\eta_2 +\eta_3)|v|_{W^{2,1}(\O)}.
\end{align}
We estimate the last two terms of \eqref{eq:Lrel} using Lemma \ref{lem:Ihapprox},  inverse  inequality  \eqref{eq:inverse} together with the stability estimate \eqref{stabPik0} from Lemma \ref{lem:approx_proj} as follows,
\begin{align}\label{eq:Grele3}
\langle{\sigma_h},v_h \rangle_h-({\sigma_h},v_h )&\lesssim   \sum_{T \in \Th^C }\|  h_T^2 \nabla \sigma_h\|_{L^{d}(T)} \|\nabla v_h\|_{L^{d/(d-1)}(T)} \notag \\ 
&\lesssim  \sum_{T \in \Th^C }\|  h_T^2 \nabla \sigma_h\|_{L^{d}(T)} \|\nabla v_h\|_{W^{1,1}(T)} \notag \\
&\lesssim \left(\max_{T \in \Th^C }\|  h_T^2 \nabla \sigma_h\|_{L^{d}(T)}  \right)  \sum_{T \in \Th^C }\|\nabla v\|_{W^{1,1}(T)} \notag\\
& \leq \eta_4 |v|_{W^{2,1}(\O)}.
\end{align}

Combining \eqref{eq:Grele1}, \eqref{eq:Grele2} and \eqref{eq:Grele3} together with \eqref{eq:Lrel}, we obtain
\begin{align*}
\|G_h\|_{{-2,\infty,\O}} \lesssim \eta_1 +\eta_2 +\eta_3 +\eta_4.
\end{align*}
\end{proof}

\par \noindent
Now, we show that the following bound on the error $\|{\tilde\sigma}(u)-\sigma_h\|_{{-2,\infty,\O}}$ holds.
%is then follows from equation \eqref{eq:Gh}
\begin{proposition} Let $\tilde\sigma(u)$ and $\sigma_h$ be as defined in \eqref{eq:extend:sigmadef} and \eqref{def-sigmah}, respectively. Then,
\begin{align*}
\|{\tilde\sigma}(u)-\sigma_h\|_{{-2,\infty,\O}} \lesssim \eta_h.
\end{align*}
\end{proposition}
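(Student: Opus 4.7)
The plan is to invoke the Galerkin functional identity \eqref{eq:Gh}, which upon rearrangement reads, for every $v\in\mathcal{M}$,
\[
\langle \tilde\sigma(u)-\sigma_h, v\rangle \;=\; \langle G_h, v\rangle \;-\; \widetilde{\mathcal{A}}_h(u-u_h, v),
\]
and then apply the triangle inequality. The first summand is bounded by Lemma \ref{eq:Grel}, giving $\|G_h\|_{-2,\infty,\Omega}\lesssim \eta_1+\eta_2+\eta_3+\eta_4\le \eta_h$, so the remaining task is to establish
\[
|\widetilde{\mathcal{A}}_h(u-u_h,v)|\;\lesssim\; \eta_h\,\|v\|_{\mathcal{W}_\Omega},\qquad v\in\mathcal{W}_\Omega.
\]

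The approach to this second bound exploits the regularity $v\in W^{2,1}_0(\Omega)$: matching traces across interior edges and $v=0$ on $\partial\Omega$ yield $\sjump{v}=0$ on every $e\in\Eh$; the matching gradient traces and $\nabla v\cdot n=0$ on $\partial\Omega$ give $\sjump{\nabla v}=0$ on interior edges. Combined with $u\in C^{0,\beta}(\overline\Omega)$, $u|_{\partial\Omega}=0$, which yields $\sjump{u-u_h}=-\sjump{u_h}$ on every edge, the only surviving term in $\tilde b_h(u-u_h,v)$ is $\sum_{e\in\Eh}\int_e\smean{\nabla\Pi_h v}\cdot \sjump{u_h}\,ds$. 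An elementwise integration by parts in $a_h(u-u_h,v)$, using $\Delta u_h|_T=0$ (since $u_h|_T\in\mathbb{P}^1$) together with the identity $\sjump{u_h}\cdot\smean{\nabla v}=u_h(\nabla v\cdot n_e)=0$ on every boundary edge, will produce the clean expression
\[
\widetilde{\mathcal{A}}_h(u-u_h,v) \;=\; -\int_\Omega (u-u_h)\,\Delta v\,dx \;+\; \sum_{e\in\Eh}\int_e \sjump{u_h}\cdot\smean{\nabla(\Pi_h v-v)}\,ds.
\]

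Each contribution is then handled separately. For the volume integral, H\"older's inequality together with the reliability estimate (Theorem \ref{thm:reliability}) yields $\big|\int_\Omega(u-u_h)\Delta v\,dx\big|\le \|u-u_h\|_{L^\infty(\Omega)}\,|v|_{W^{2,1}(\Omega)}\lesssim \eta_h\,\|v\|_{\mathcal{W}_\Omega}$. For the edge sum, factoring out $\|\sjump{u_h}\|_{L^\infty(\Eh)}=\eta_3$ leaves $\sum_{e}\|\smean{\nabla(\Pi_h v-v)}\|_{L^1(e)}$, which is dominated by $|v|_{W^{2,1}(\Omega)}$ through the trace inequality \eqref{eq:traceAg} applied to $\nabla(\Pi_h v-v)\in W^{1,1}(T)$ and the approximation estimate \eqref{stabPik2} of Lemma \ref{lem:approx_proj} for $s=2$, $k=1$, together with $\nabla^2\Pi_h v\equiv 0$ on each element. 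Summing contributes $\lesssim\eta_3\,|v|_{W^{2,1}(\Omega)}\le \eta_h\,\|v\|_{\mathcal{W}_\Omega}$, completing the argument.

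I expect the subtlest point to be the boundary bookkeeping: without exploiting the cancellation $u_h(\nabla v\cdot n_e)=0$ on $\partial\Omega$ one would be forced to bound $\sum_{e\in\Ehb}\|\nabla v\|_{L^1(e)}$ by $\|v\|_{\mathcal{W}_\Omega}$ via a continuous trace theorem, which is possible but less transparent. Using $\nabla v\cdot n=0$ directly allows the entire edge sum to be rewritten in terms of the $\Pi_h$-approximation error $\smean{\nabla(\Pi_h v-v)}$, which is cleanly controlled by $|v|_{W^{2,1}(\Omega)}$ with no $h_{\min}^{-1}$ loss.
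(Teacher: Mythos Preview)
Your proof is correct and follows essentially the same route as the paper: rearrange \eqref{eq:Gh}, bound $\|G_h\|_{-2,\infty,\Omega}$ via Lemma~\ref{eq:Grel}, derive the integration-by-parts identity \eqref{eq:Lerr} for $\widetilde{\mathcal{A}}_h(u-u_h,v)$, estimate the volume term by $\|u-u_h\|_{L^\infty(\Omega)}|v|_{W^{2,1}(\Omega)}$ and the edge term by $\eta_3\,|v|_{W^{2,1}(\Omega)}$ via the trace inequality and Lemma~\ref{lem:approx_proj}, and conclude with Theorem~\ref{thm:reliability}. One harmless slip: the parenthetical ``$\Delta u_h|_T=0$'' is not actually used, since your integration by parts places $\Delta$ on $v$, not on $u-u_h$.
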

\begin{proof} For any $ v \in \mathcal{W}$, from equation \eqref{eq:Gh}, we have
\begin{align}\label{eq:Lerr1}
\langle {\tilde\sigma}(u)-{\sigma_h},v \rangle &=\langle G_h, v \rangle-\widetilde{\cA}_h(u-u_h,v).
\end{align}
Now, a use of integration by parts and \eqref{ext:b} yields
\begin{align}\label{eq:Lerr}
\widetilde{\cA}_h(u-u_h,v)&=\sum_{T\in\cT_h} \int_{ T}\nabla (u-u_h)\cdot \nabla v\,dx+\tilde{b}_h(u-u_h,v) \notag\\
&=-\sum_{T\in\cT_h} \int_{ T} (u-u_h) \Delta v\,dx+\sum_{T\in\cT_h} \int_{\partial T}(u-u_h)\frac{\partial v}{\partial n_T}\,ds+\tilde{b}_h(u-u_h,v) \notag\\
&=-\sum_{T\in\cT_h} \int_{ T} (u-u_h) \Delta v\,dx-\sum_{e\in\cE_h} \int_e \sjump{u_h} \smean{\nabla (v-\Pi_hv)}\,ds.
\end{align}
Therefore, in view of trace inequality \eqref{eq:traceAg} and Lemma \ref{lem:approx_proj} we obtain
\begin{align}\label{eq:Lerr2}
|\widetilde{\cA}_h(u-u_h,v)| \lesssim  \left (\|u-u_h\|_{L^{\infty}(\O)} +\|\sjump{u_h}\|_{L^{\infty}(\cE_h)}\right) |v|_{W^{2,1}(\O)}.
\end{align}
Combining \eqref{eq:Lerr2} together with \eqref{eq:Lerr1}, we have
\begin{align}\label{eq:Lbound}
\|{\tilde\sigma}(u)-\sigma_h\|_{{-2,\infty,\O}} \lesssim \|u-u_h\|_{L^{\infty}(\O)}+\|\sjump{u_h}\|_{L^{\infty}(\cE_h)}+\|G_h\|_{{-2,\infty,\O}}.
\end{align}
Finally, in view of \eqref{eq:Lbound}, Lemma \ref{eq:Grel} and Theorem \ref{thm:reliability} we have the desired reliability estimate of $\eta_h$ for the error in Lagrange multiplier.
%%%%%%%%%%%%%%%%%

%where in obtaining the last estimate we have used that
\end{proof}

%%%%%%%%%%%%%%%%%%%%%%%%
\subsection{Error Estimator for Conforming Finite Element Method}
The conforming finite element method for the model problem \eqref{eq:MP} is to find $\tilde{u}_h \in \cK_h^{conf}$ such that
\begin{align}\label{eq:DCP}
a(\tilde{u}_h,v_h-u_h) \geq (f,v_h-u_h) \quad \forall v_h \in \cK_h^{conf},
\end{align}
where
\begin{align*}
\cK_h^{conf}:= \{v_h\in V_h^{conf}: v_h(p)\geq \chi_h(p),\;\; \forall p\in \cV_h^i\}.
\end{align*}
In this case, our analysis will lead to the following reliability estimates:
\begin{corollary}\label{thm:Creliability} Let $u \in \cK$ and $\tilde{u}_h \in \cK_h^{conf}$ be the  solution of \eqref{eq:MP} and \eqref{eq:DCP}, respectively. Then,
\begin{align*}
\|u-{\tilde{u}_h}\|_{L^{\infty}(\Omega)}\lesssim  \zeta_h,
\end{align*}
where
\begin{align*}
{\zeta}_h&=  |\log h_{min}| \Big( \eta_1 +\eta_2\Big)+\|(\chi-\tilde{u}_h)^{+}\|_{L^{\infty}(\Omega)}+\|({\tilde{u}_h}-\chi)^{+}\|_{L^{\infty}(\{\sigma_h<0\})} + \max_{T \in \cT_h}Osc(f,T).
%\Big(\|\sjump{u_h}\|_{L^{\infty}(\cE_h)}\\ &\quad +\|h^2(f-\sigma_h)\|_{L^{\infty}(\cT_h)}+  h \|\sjump{\nabla u_h}\|_{L^{\infty}(\cE_{h}^{i})} \Big),
\end{align*}
\end{corollary}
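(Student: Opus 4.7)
The plan is to specialize the reliability proof of Theorem \ref{thm:reliability} to the conforming setting, where several contributions vanish or simplify. First I would observe that, since $\tilde{u}_h \in V_h^{conf} \subset H^1_0(\Omega)$, no conforming reconstruction is needed: the averaging step can be skipped by taking $u_h^{conf} := \tilde{u}_h$. In particular $\sjump{\tilde{u}_h} \equiv 0$ on every interior edge so that $\eta_3 = 0$, and the nonconformity correction $\widetilde{\cA}_h(u_h - u_h^{conf}, \cdot)$ which appeared throughout the DG argument drops out. I would then define the discrete multiplier $\sigma_h \in V_h$ exactly as in \eqref{def-sigmah} but with the bilinear form $a(\cdot,\cdot)$ replacing $\cA_h(\cdot,\cdot)$; the argument leading to \eqref{eq:sigp1} (testing \eqref{eq:DCP} against $\tilde{u}_h \pm \delta \phi^h_p$) still applies and gives $\sigma_h \le 0$ pointwise and $\sigma_h \equiv 0$ at vertices where $\tilde{u}_h(p) > \chi_h(p)$.

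Next I would introduce the barriers $u^*, u_*$ as in \eqref{eq:upper}--\eqref{eq:lower} with $u_h^{conf}$ replaced by $\tilde{u}_h$, and the corrector $w \in H^1_0(\Omega)$ as the Riesz representative of the functional $\cF_h(v) = (f,v) - a(\tilde{u}_h,v) - (\sigma_h,v)$ for $v \in H^1_0(\Omega)$. The proof of Lemma \ref{lem:barrier} transfers verbatim, since it relied only on the pointwise sign of $\sigma_h$, the support property of $\sigma(u)$, and a short calculation using the definition of $u_*$; this yields $u_* \le u \le u^*$ and hence
\begin{align*}
\|u - \tilde{u}_h\|_{L^\infty(\Omega)} \lesssim \|w\|_{L^\infty(\Omega)} + \|(\chi - \tilde{u}_h)^+\|_{L^\infty(\Omega)} + \|(\tilde{u}_h - \chi)^+\|_{L^\infty(\{\sigma_h<0\})}.
\end{align*}

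The main work is then to reproduce Proposition \ref{lem:Linfw} for the conforming case, estimating $\|w\|_{L^\infty(\Omega)}$ via the Green's function $\mathcal{G}(x_0,\cdot)$ with singularity at the maximizer $x_0$. The representation \eqref{key02} simplifies to $w(x_0) = \langle G_h, \mathcal{G}\rangle$, and splitting through the projection $\Pi_h\mathcal{G}$ produces exactly three surviving contributions: the volume residual $(f - \sigma_h, \mathcal{G} - \Pi_h\mathcal{G})$ yielding $|\log h_{min}|\eta_1$; the normal-derivative jump term arising from element-wise integration by parts of $a(\tilde{u}_h, \mathcal{G} - \Pi_h\mathcal{G})$ yielding $|\log h_{min}|\eta_2$; and the localization discrepancy $\langle \sigma_h, \Pi_h\mathcal{G}\rangle_h - (\sigma_h, \Pi_h\mathcal{G})$ bounded by $|\log h_{min}|\eta_4$ as in \eqref{sigmaEst}. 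The nonconformity estimates \eqref{eq:AhEst1}--\eqref{eq:AhEst2} and the penalty contribution in \eqref{terzo} disappear entirely. Finally, Lemma \ref{lem:Estmsig} absorbs $\eta_4 \lesssim \eta_1 + \max_T Osc(f,T)$, producing the stated bound by $\zeta_h$.

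The only mildly delicate point I anticipate is verifying that the local definition \eqref{def-sigmah}--\eqref{sigmah0} of $\sigma_h$, which was tailored to exploit DG locality, still satisfies $\sigma_h \le 0$ on every $T$ and $\sigma_h \equiv 0$ on elements of $\mathbb{N}_h$ in the conforming setting; but this follows from the same nodal-test argument as in the DG case, since the construction of $\langle\cdot,\cdot\rangle_h$ is independent of which discrete bilinear form is used on the left-hand side.
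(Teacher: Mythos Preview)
Your proposal is correct and follows exactly the path the paper intends: the corollary is stated in the paper without proof, as an immediate specialization of the DG analysis, and your outline fills in precisely those details---setting $u_h^{conf}=\tilde u_h$ so that $\eta_3=0$ and the nonconformity corrections in \eqref{eq:AhEst}--\eqref{eq:AhEst3} vanish, then invoking Lemma~\ref{lem:Estmsig} to trade $\eta_4$ for $\eta_1+Osc(f,T)$ just as in Corollary~\ref{cor:Rel}.

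One small imprecision worth tightening: in the conforming setting the discrete multiplier $\sigma_h$ should be taken in $V_h^{conf}$ (as in \cite{Veeser:2001:VI}), not in the discontinuous space $V_h$. The nodal-test argument you invoke uses test functions $\tilde u_h\pm\delta\phi_p^h$ from $\cK_h^{conf}$, which are the \emph{global} hat functions; this yields $\sigma_h(p)\le 0$ at each vertex $p$ for the continuous $\sigma_h$, and hence $\sigma_h\le 0$ elementwise since $\sigma_h$ is piecewise linear. Your final sentence attributes the issue to the bilinear form rather than the test space, but the conclusion you need still holds once $\sigma_h\in V_h^{conf}$.
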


\begin{remark} This error estimator $\zeta_h$ is comparable with the error estimator reported in \cite{NSV:2002:Ptwise}. The slight improvement in the log term is obtained using the refined Green's function estimates.
\end{remark}

%%%%%%%%%%%%%%%%%%%%%%%%%%%%%%%%%%%%%%%%%%%%%%%%%%%%%%%%%%%%%%
\section{Efficiency of the error estimator}\label{Sec:eff}

\par \noindent
In this section, we derive the local efficiency estimates of a posteriori error estimator $\eta_h$ obtained in the previous section. 
Therein, we require a bound on a negative norm of the residual in terms of the $L^{\infty}$ error in the solution $u$ and a negative norm of the error in the Lagrange multiplier.
Let $D\subset\Omega$ be a set formed by some triangles in $\cT_h$.  Essentially in what follows, $D$ is either a simplex or union of two simplices sharing a face/edge. Denote $\cT_D$ by the set of all $T\in \cT_h$ such that $T\subset D$. Furthermore, let $\cE_D$ be the set of all edges/faces in $\cE_h$ that are in $D$.

\par \noindent
Note that for any $\tilde v \in \mathcal{W}_D$ (defined in \eqref{defW}) extended to $\Omega$ by zero outside of $D$, the resulting extended function denoted by $v$ is in $\mathcal{W}=W^{2,1}_{0}(\O)$.
For any $\tilde v \in \mathcal{W}_D$, using the definition of $G_h$ we have,

\begin{align}\label{eq:Ghest}
\langle G_h,\tilde v \rangle&=\langle G_h, v \rangle =\widetilde{\cA}_h(u-u_h,v)+\langle {\tilde\sigma}(u)-{\sigma_h},v \rangle.
\end{align}
Therein, we simplify the first term as in \eqref{eq:Lerr} to get
\begin{align*}
\widetilde{\cA}_h(u-u_h,v)=- \sum_{T \in \cT_h}\int_{T}(u-u_h) \Delta v~dx-\sum_{e\in\cE_h} \int_e \sjump{{u_h}} \smean{\nabla (v-\Pi_h v)}\,ds.
\end{align*}

Therefore,
\begin{align}
\langle G_h,\tilde v \rangle &=- \int_{\O}(u-u_h) \Delta v~dx-\sum_{e\in\cE_h} \int_e \sjump{{u_h}} \smean{\nabla (v-\Pi_h v)}\,ds +\langle {\tilde\sigma}(u)-{\sigma_h},v \rangle \nonumber\\
&=- \int_{D}(u-u_h) \Delta \tilde v~dx-\sum_{e\in\cE_D} \int_e\sjump{{u_h}} \smean{\nabla (\tilde v-\Pi_h \tilde v)}\,ds +\langle {\tilde\sigma}(u)-{\sigma_h},\tilde v \rangle. \label{eq:Ghest1}
\end{align}

We begin by estimating the terms on the right hand side of  \eqref{eq:Ghest1} as follows: first,
\begin{align*}
\left|\int_{D}(u-u_h) \Delta v~dx\right| \lesssim \|u-u_h\|_{L^{\infty}(D)}  | v|_{\mathcal{W}},
\end{align*}
 a use of H{\"o}lders' inequality, trace inequality \eqref{eq:traceAg} and Lemma \ref{lem:approx_proj} yields,
\begin{align*}
\sum_{e\in\cE_D} \int_e \sjump{{u_h}}\smean{\nabla (v-\Pi_h v)}\,ds &\leq  \sum_{e\in\cE_D} \|\smean{\nabla (v-\Pi_h v)} \|_{L^1(e)} \|\sjump{{u_h}}\|_{L^{\infty}(e)} \\
& \lesssim \|\sjump{u_h}\|_{L^{\infty}(\cE_D)} \sum_{T \in \cT_D}\Big( h_T^{-1} \|{\nabla (v-\Pi_h v)} \|_{L^1(T)} + |v|_{W^{2,1}(T)} \Big)\\
& \lesssim \|\sjump{u_h}\|_{L^{\infty}(\cE_D)} \sum_{T \in \cT_D} |v|_{W^{2,1}(T)} \lesssim \|\sjump{u_h}\|_{L^{\infty}(\cE_D)}  | \tilde v|_{\mathcal{W}_D},
\end{align*}
and,
\begin{align*}
\langle {\tilde\sigma}(u)-{\sigma_h},v \rangle \lesssim  \|{\tilde\sigma}(u)-\sigma_h\|_{{-2,\infty,D}} | \tilde v|_{\mathcal{W}_D}.
\end{align*}
\par \noindent
Now, the following bound on the Galerkin functional $G_h$, in the dual space of $\mathcal{W}_D$, is then immediate from equation \eqref{eq:Ghest1}:
\begin{align}\label{eq:Ghbound}
\|G_h\|_{{-2,\infty,D}} \lesssim \|u-u_h\|_{L^{\infty}(D)}+\|\sjump{u_h}\|_{L^{\infty}(\cE_D)}+\|{\tilde\sigma}(u)-\sigma_h\|_{{-2,\infty,D}}.
\end{align}
This estimate on $G_h$ will be useful in deriving the local efficiency estimates. 

%%%%%%%%%%%%%%%%%
\par
\noindent
The derivation of the following local efficiency estimates of the error estimator uses the standard bubble functions technique \cite[Chapter 2, page 23]{AO:2000:Book}. We have discussed the main ideas involved in proving these efficiency estimates and skipped the standard details.

%%%%%%%%%%%%%%%%%%%
\begin{theorem}
Let $u \in \cK$ and $u_h \in \cK_h$ be the solutions of \eqref{eq:MP} and \eqref{eq:Ah}, respectively. Then it holds that,
\begin{align}
h_T^2\|f-\sigma_h\|_{L^{\infty}(T)}+\|(\chi-u_h)^{+}\|_{L^{\infty}(T)} &\lesssim  \|u-u_h\|_{L^{\infty}(T)}+ \|\sigma(u)-\sigma_h\|_{-2,\infty,T}  \label{eq:Eff1} \\ & \quad +Osc(f,T)\quad \forall T \in \cT_h, \notag
\end{align}
\begin{align}
\|\sjump{u_h}\|_{L^{\infty}(e)} &\lesssim \|u-u_h\|_{L^{\infty}(\omega_e)} \quad \forall e\in \Eh, \label{eq:EffJump}\\
h_e\|\sjump{\nabla u_h}\|_{L^{\infty}(e)}&\lesssim \|u-u_h\|_{L^{\infty}(\omega_e)}+\|\sigma(u)-\sigma_h\|_{-2,\infty,\omega_e}+Osc(f,\omega_e) \qquad \forall e\in \Eho, \label{eq:Eff2}\\
h_T^2\|\nabla \sigma_h\|_{L^{d}(T)}&\lesssim\|u-u_h\|_{L^{\infty}(T)}+\|\sigma(u)-\sigma_h\|_{-2,\infty,T} +Osc(f,T)\qquad \forall ~T\in \mathbb{C}_h \cup \mathbb{M}_h\;.\label{eq:Eff3} 
\end{align}
{ Further, assuming  $\chi<0$ on $T \cap \partial \Omega$  if $ T \in \mathcal{M}_h$ and $T\cap \partial \Omega \neq \emptyset$, it holds for all elements $T \in \cT_h$ such that $T \cap \{\sigma_h <0\} \neq \emptyset$ we have}
\begin{align}
\|( u_h-\chi)^+\|_{L^{\infty}(T)}&\lesssim \|u-u_h\|_{L^{\infty}(T)}+\|\sigma(u)-\sigma_h\|_{-2,\infty,T}  +\|(\chi-\chi_h)^+\|_{L^{\infty}(T)} \label{eq:Eff4}\\ & +\sum_{p\in \cV_T, T \in \mathbb{M}_h}\sum_{e\in \cE_p\cap\Eho}h_e(\|\sjump{\nabla u_h}\|_{L^{\infty}(e)}+\|\sjump{\nabla\chi_h}\|_{L^{\infty}(e)}).\notag
\end{align}
\par \noindent
Here, $\omega_e$ denotes the union of elements sharing the face $e$ and $Osc(f,T)$ is  defined in \eqref{def:Osc}.
\end{theorem}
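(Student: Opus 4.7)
The proof is a classical Verf\"urth-style bubble-function argument, with the Galerkin-functional bound \eqref{eq:Ghbound} serving as the dual-norm residual control. Apart from \eqref{eq:Eff4}, every estimate follows from standard localisation combined with the analytical facts $u\in C^{0}(\bar\Omega)\cap H^{1}_{0}(\Omega)$ with $u|_{\partial\Omega}=0$, $u\geq\chi$, and $\sigma(u)\leq 0$ with $\supp\sigma(u)\subset\mathbb{C}$. I would treat the five estimates in the order \eqref{eq:Eff1}, \eqref{eq:EffJump}, \eqref{eq:Eff2}, \eqref{eq:Eff3}, \eqref{eq:Eff4}, each leaning on its predecessors.

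For \eqref{eq:Eff1}, the obstacle-violation piece is trivial: $\chi\leq u$ gives $(\chi-u_h)^{+}\leq (u-u_h)^{+}\leq \|u-u_h\|_{L^{\infty}(T)}$. For the volume residual I would take $\bar f\in\mathbb{P}^{0}(T)$ and test \eqref{eq:Ghest1} (restricted to $D=T$) against the interior-bubble function $v=b_T(\bar f-\sigma_h)\in\mathcal{W}_T$. Norm equivalence on $\mathbb{P}^{1}(T)$ yields $\|\bar f-\sigma_h\|_{L^{\infty}(T)}^{2}\,h_T^{d}\lesssim\int_T b_T(\bar f-\sigma_h)^{2}$, and combining with \eqref{eq:Ghbound}, the scaling $|v|_{\mathcal{W}_T}\lesssim h_T^{d-2}\|\bar f-\sigma_h\|_{L^{\infty}(T)}$ and the triangle inequality against $f-\bar f$ absorbs the unwanted terms modulo $Osc(f,T)$. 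Estimate \eqref{eq:EffJump} is immediate: $u\in C^{0}(\bar\Omega)$ with $u|_{\partial\Omega}=0$ gives $\sjump{u}\equiv 0$, hence $\sjump{u_h}=\sjump{u_h-u}$ and the triangle inequality concludes.

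For \eqref{eq:Eff2} I would pair the edge bubble $b_e$ on $\omega_e$ with the constant extension of $\sjump{\nabla u_h}|_e$ and test \eqref{eq:Ghest1} against $v=b_e\,\sjump{\nabla u_h}|_e\in\mathcal{W}_{\omega_e}$; the integration-by-parts manipulation leading to \eqref{eq:Lerr} isolates the edge term, while the volume-residual and multiplier contributions are absorbed via \eqref{eq:Eff1}, \eqref{eq:Ghbound}, and the localisation $\sigma(u)|_{\mathbb{N}}\equiv 0$. Estimate \eqref{eq:Eff3} is pure inverse estimation: for any $\bar f\in\mathbb{P}^{0}(T)$ one has
$h_T^{2}\|\nabla\sigma_h\|_{L^{d}(T)}=h_T^{2}\|\nabla(\sigma_h-\bar f)\|_{L^{d}(T)}\lesssim h_T\|\sigma_h-\bar f\|_{L^{d}(T)}\lesssim h_T^{2}\|\sigma_h-\bar f\|_{L^{\infty}(T)}\lesssim h_T^{2}\|f-\sigma_h\|_{L^{\infty}(T)}+Osc(f,T)$,
after which \eqref{eq:Eff1} closes the chain.

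The hard estimate is \eqref{eq:Eff4}. By Remark \ref{rmk:r1} any $T$ with $T\cap\{\sigma_h<0\}\neq\emptyset$ lies in $\mathbb{C}_h\cup\mathbb{M}_h$, which I would split into two cases. If $T\in\mathbb{C}_h$, then $u_h|_T\equiv\chi_h|_T$, so $(u_h-\chi)^{+}=(\chi_h-\chi)^{+}$; a pointwise case-split at $x^{\ast}\in T$ where $\chi_h(x^{\ast})>\chi(x^{\ast})$ then uses either $u(x^{\ast})\leq u_h(x^{\ast})$ (so $|u-u_h|(x^{\ast})\geq\chi_h(x^{\ast})-\chi(x^{\ast})$, absorbing into $\|u-u_h\|_{L^{\infty}(T)}$) or $u(x^{\ast})>\chi_h(x^{\ast})\geq\chi(x^{\ast})$ (so $x^{\ast}\in\mathbb{N}$, hence $\sigma(u)=0$ locally while $\sigma_h<0$, which is absorbed into $\|\sigma(u)-\sigma_h\|_{-2,\infty,T}$), leaving only the obstacle-interpolation contribution. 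If $T\in\mathbb{M}_h$, the hypothesis $\chi<0$ on $T\cap\partial\Omega$ forces an interior contact vertex $p\in\cV_T\cap\cV_h^{i}$ with $u_h(p)=\chi_h(p)$, since boundary vertices of $T$ carry $\chi_h<0$ and cannot be the coincidence vertex once the $\mathbb{M}_h$-definition is used. Affinity of $u_h-\chi_h$ on $T$ together with $(u_h-\chi_h)(p)=0$ gives $\|u_h-\chi_h\|_{L^{\infty}(T)}\lesssim h_T\|\nabla(u_h-\chi_h)|_T\|$, and I would propagate this gradient around the star $\cT_p$ using the observation that $E_hu_h(p)=\chi_h(p)$ (from $E_hu_h\in\cK_h\cap V_h^{conf}$ via \eqref{eq:Ehi2} and $E_hu_h(p)=\min_{T'\in\cT_p}u_h|_{T'}(p)\leq u_h|_T(p)=\chi_h(p)$ by \eqref{eq:Enrich}): each step across a face $e\in\cE_p\cap\Eho$ then picks up $h_e\|\sjump{\nabla u_h}\|_{L^{\infty}(e)}$ and analogously $h_e\|\sjump{\nabla\chi_h}\|_{L^{\infty}(e)}$, assembling precisely the sum on the right-hand side. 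The main obstacle is this last gradient-propagation step: the piecewise-constant $\nabla_h u_h$ must be reconstructed from face-jumps while tracking both normal and tangential components, and the interior-vertex guarantee supplied by the boundary hypothesis is essential to keep the telescoping chain inside $\Eho$ without uncontrolled boundary contributions.
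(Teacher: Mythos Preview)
Your treatment of \eqref{eq:Eff1}--\eqref{eq:Eff3} matches the paper's proof essentially line by line (bubble functions, the bound \eqref{eq:Ghbound}, and the inverse estimate of Lemma~\ref{lem:Estmsig}), so there is nothing to add there.

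The gap is in \eqref{eq:Eff4}. In the $\mathbb{C}_h$ case your pointwise split is both unnecessary and incorrect: if $u(x^\ast)\le u_h(x^\ast)=\chi_h(x^\ast)$ then, using $u\ge\chi$, one gets $|u-u_h|(x^\ast)=\chi_h(x^\ast)-u(x^\ast)\le \chi_h(x^\ast)-\chi(x^\ast)$, the reverse of what you claim; and in the other branch, knowing only that $\sigma_h<0$ somewhere on $T$ gives no quantitative lower bound on $\|\sigma(u)-\sigma_h\|_{-2,\infty,T}$ in terms of $(\chi_h-\chi)(x^\ast)$. The paper sidesteps all of this: it first passes from $u_h$ to $u_h^{conf}=E_hu_h$ via the triangle inequality and Lemma~\ref{lem:Eapen4}, and then for $T\in\mathbb{C}_h$ simply records $(u_h^{conf}-\chi)^+=(\chi_h-\chi)^+$, which is already one of the terms on the right-hand side.

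More seriously, in the $\mathbb{M}_h$ case your assertion that ``boundary vertices of $T$ carry $\chi_h<0$ and cannot be the coincidence vertex'' is false for $u_h$: since $u_h$ is discontinuous and need not vanish on $\partial\Omega$, one can perfectly well have $u_h|_T(p)=\chi_h(p)<0$ at a boundary vertex $p$, and then no interior contact vertex for $u_h$ is guaranteed. The paper's remedy is again to work with $u_h^{conf}$ rather than $u_h$: because $u_h^{conf}(p)=0>\chi_h(p)$ at every boundary vertex (by \eqref{eq:Enrich} and the hypothesis $\chi<0$ on $T\cap\partial\Omega$), the contact vertex for $u_h^{conf}$ is forced to be interior. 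Equally important, the ``gradient propagation'' you sketch is precisely the quadratic-growth lemma for a \emph{non-negative conforming} piecewise-linear function vanishing at an interior node (Baiocchi; Lemma~6.4 in \cite{NSV:2002:Ptwise}), and it requires both conformity and non-negativity. The function $u_h-\chi_h$ has neither across element boundaries, whereas $u_h^{conf}-\chi_h$ has both (by \eqref{eq:Ehi2}). So the decisive step you are missing is the initial passage $u_h\to u_h^{conf}$; once that is done, the quadratic-growth lemma yields the edge-jump sum with $\sjump{\nabla u_h^{conf}}$, which is then converted to $\sjump{\nabla u_h}$ by one more triangle inequality and \eqref{eq:apen4}.
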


{
\begin{remark} The proof of efficiency estimate \eqref{eq:Eff3} follows directly from Lemma \ref{lem:Estmsig} and the estimate \eqref{eq:Eff1}.
\end{remark}
}
\begin{proof}

${\color{blue}\bullet}$  We start with showing that for any $T \in \cT_h$,
\begin{align*}
h_T^2\|f-\sigma_h\|_{L^{\infty}(T)}+\|(\chi-u_h)^{+}\|_{L^{\infty}(T)} &\lesssim \|u-u_h\|_{L^{\infty}(T)}+ \|\sigma(u)-\sigma_h\|_{-2,\infty,T}+Osc(f,T).
\end{align*}
\noindent
Note that, the bound on the second term of left hand side in above estimate  follows immediately since $(\chi-u_h)^{+} \leq (u-u_h)^{+}$ in view of  $u \geq \chi$. We have,
\begin{align}\label{eq:effeq1}
\|(\chi-u_h)^{+}\|_{L^{\infty}(T)} \lesssim  \|u-u_h\|_{L^{\infty}(T)}.
\end{align}
To estimate $h_T^2\|f-\sigma_h\|_{L^{\infty}(T)}$, let $b_T \in \mathbb{P}^{2(d+1)}(T)\cap H^2_0(T)$ be the polynomial bubble function which vanishes up to first order derivative on $\partial T$, attains unity at the barycenter of T and $\|b_T\|_{L^{\infty}(T)}=1$.
\par
\noindent
Let $\tilde{f} \in \mathbb{P}^0(T)$ be arbitrary. Set $\phi_T=b_T (\tilde{f}-\sigma_h)$ on $T$ and let $\phi $ be the extension of $\phi_T$ to $\bar\Omega$ by zero. It is easy to see that $\phi$ is continuously differentiable on $\bar\Omega$ and $\phi \in H^2_0(\Omega)$.
\noindent
Using the equivalence of norms in finite dimensional normed spaces, we have the existence of positive constants $C_1$ and $C_2$ such that
\begin{align*}
C_1 \int_T (\tilde{f}-\sigma_h) \phi_T ~dx\leq \|\tilde{f}-\sigma_h\|_{L^{2}(T)}^2 \leq C_2 \int_T (\tilde{f}-\sigma_h) \phi_T~dx.
\end{align*}
\noindent
Taking into consideration that $u_h$ is linear on T, a use of \eqref{eq:Gh2}  and integration by parts yielding $a_h(u_h,\phi)=0$, we have
\begin{align}\label{eq:eff3}
\|\tilde{f}-\sigma_h\|_{L^{2}(T)}^2 &\lesssim \int_T (\tilde{f}-\sigma_h) \phi_T~dx 
%&=\int_T (\tilde{f}+\Delta u_h-\sigma_h) \phi_T~dx \notag\\
=\int_T (f-\sigma_h) \phi_T~dx +\int_T (\tilde{f}-f) \phi_T ~dx\notag\\
&\lesssim a(u,\phi) + \langle \sigma(u) -\sigma_h, \phi \rangle+\int_T (\tilde{f}-f) \phi_T ~dx\notag\\
%&=\langle G_h,\phi \rangle +\widetilde{\mathcal{A}}_h(u_h,\phi)+\int_T (\tilde{f}-f) \phi_T ~dx\notag\\
&\lesssim \langle G_h,\phi \rangle+\tilde{b}_h(u_h,\phi) +\int_T (\tilde{f}-f) \phi_T ~dx.
\end{align}
\noindent
Now, using the inverse estimate \eqref{eq:inverse2}, we have,
\begin{align}
h_T^2\|\tilde{f}-\sigma_h\|_{L^{\infty}(T)} \lesssim  h_T^{2-\frac{d}{2}} \|\tilde{f}-\sigma_h\|_{L^{2}(T)}. \label{eq:ef2}
\end{align}
Therefore, from equations \eqref{eq:ef2} and \eqref{eq:eff3}, we have
\begin{align}\label{eq:Eeq1}
h_T^4\|\tilde{f}-\sigma_h\|_{L^{\infty}(T)}^2 &\lesssim h_T^{4-d} \|\tilde{f}-\sigma_h\|_{L^{2}(T)}^2 \notag\\
&\lesssim h_T^{4-d} \Big(\langle G_h,\phi \rangle+\tilde{b}_h(u_h,\phi) +\int_T (\tilde{f}-f) \phi_T ~dx \Big).
%& \lesssim h_T^{4-d} \Big( \|G_h\|_{-2,\infty,T} \|\phi_T\|_{W^{2,1}(T)}+ \|\sjump{u_h}\|_{L^{\infty}(\partial T)} \|\nabla \phi_T\|_{L^r(T)} \notag\\\qquad& + \|\tilde{f}-\sigma_h\|_{L^{\infty}(T)} \|\phi_T\|_{L^1(T)} \Big )
\end{align}
To bound the terms of the right hand side of the last equation, we first estimate the term $\tilde{b}_h(u_h,\phi)$ as follows:  a use of H\"older inequality, trace inequality \eqref{eq:traceAg}, inverse inequality and Lemma \ref{lem:approx_proj} yields,
\begin{align} \label{eq:bhNb}
\tilde{b}_h(u_h,\phi) &=- \sum_{e \in \partial T} \int_e \sjump{u_h} \smean{\nabla \Pi_h \phi}~ds 
\leq \sum_{e \in \partial T}  \|\sjump{u_h}\|_{L^{\infty}(e)} \|\smean{\nabla \Pi_h \phi}\|_{L^1(e)} \notag\\
& \lesssim \|\sjump{u_h}\|_{L^{\infty}(\partial T)} (h_T^{-1} \|\nabla \Pi_h \phi\|_{L^1(T)} + |\nabla \Pi_h \phi|_{W^{1,1}(T)} ) \notag\\
& \lesssim \|\sjump{u_h}\|_{L^{\infty}(\partial T)} (h_T^{-2} \| \Pi_h \phi\|_{L^1(T)} + |\nabla \Pi_h \phi|_{W^{1,1}(T)} ) \notag\\
& \lesssim \|\sjump{u_h}\|_{L^{\infty}(\partial T)} (h_T^{-2} \| \phi_T\|_{L^1(T)} + |\phi_T |_{W^{2,1}(T)} ) \notag \\
& \lesssim h_T^{-2}  \|\sjump{u_h}\|_{L^{\infty}(\partial T)} \| \phi_T\|_{L^1(T)}.
\end{align}
Combining \eqref{eq:Eeq1} and \eqref{eq:bhNb}, we have
\begin{align}\label{eq:Eeq11}
h_T^4\|\tilde{f}-\sigma_h\|_{L^{\infty}(T)}^2 &\lesssim h_T^{4-d} \Big( \|G_h\|_{-2,\infty,T} \|\phi_T\|_{W^{2,1}(T)}+ h_T^{-2} \|\sjump{u_h}\|_{L^{\infty}(\partial T)}   \| \phi_T\|_{L^1(T)} \notag\\\qquad\qquad& + \|\tilde{f}-f\|_{L^{\infty}(T)} \|\phi_T\|_{L^1(T)} \Big ).
\end{align}
Further, using inverse estimates \eqref{eq:inverse}, \eqref{eq:inverse1}, H\"older inequality and the structure of $b_T$, we have
\begin{align*}
|\phi_T|_{W^{2,1}(T)} &\lesssim h_T^{-2}\|\phi_T\|_{L^1(T)} 
 \lesssim h_T^{d-2} \|\phi_T\|_{L^{\infty}(T)} 
\lesssim  h_T^{d-2} \|\tilde{f}-\sigma_h\|_{L^{\infty}(T)}
\end{align*}
%\begin{align*}
%\|\nabla \phi_T\|_{L^r(T)} &\leq h_T^{-1} \|\phi_T\|_{L^r(T)} \\
%&\leq  h_T^{\frac{d}{r}-1} \|\phi_T\|_{L^{\infty}(T)} \\
%& \leq  Ch_T^{d-2} \|\tilde{f}-\sigma_h\|_{L^{\infty}(T)},
%\end{align*}
and
\begin{align*}
\|\phi_T\|_{L^1(T)} &\lesssim  h_T^{d} \|\phi_T\|_{L^{\infty}(T)} 
\lesssim C h_T^{d} \|\tilde{f}-\sigma_h\|_{L^{\infty}(T)}.
\end{align*}
Therefore, from  \eqref{eq:Eeq11}, we have
\begin{align*}
h_T^4\|\tilde{f}-\sigma_h\|_{L^{\infty}(T)}^2 &\lesssim h_T^{4-d}  h_T^{d-2} \Big( \|G_h\|_{-2,\infty,T} + \|\sjump{u_h}\|_{L^{\infty}(\partial T)} + h_T^2\|\tilde{f}-f\|_{L^{\infty}(T)}  \Big ) \|\tilde{f}-\sigma_h\|_{L^{\infty}(T)}.
\end{align*}
Finally, from \eqref{eq:Ghbound}, we obtain
\begin{align}\label{eq:effeq2}
h_T^2\|\tilde{f}-\sigma_h\|_{L^{\infty}(T)} \lesssim \|u-u_h\|_{L^{\infty}(T)}+\|\sjump{u_h}\|_{L^{\infty}(\partial T)}+ \|\sigma(u)-\sigma_h\|_{-2,\infty,T}+Osc(f,T).
\end{align}
Combining estimates \eqref{eq:effeq1} and \eqref{eq:effeq2} taking into account \eqref{eq:EffJump}, we obtain the estimate \eqref{eq:Eff1}.\\

${\color{blue}\bullet}$ The estimate  \eqref{eq:EffJump} can be obtained easily. Let $e \in \Eho$ and $\omega_e=\overline{T}_+ \cup \overline{T}_-$, where $T_+$ and $T_-$ are elements sharing the face $e$. Then, by taking into consideration $\sjump{u}=0$ on $e$, we have
\begin{align*} 
\|\sjump{u_h}\|_{L^{\infty}(e)}& = \|\sjump{u-u_h}\|_{L^{\infty}(e)}
\lesssim \|u-u_h\|_{L^{\infty}(\omega_e)}.
\end{align*}
\par
\noindent
For $e \in \Ehb$, similar arguments can be used to estimate $\|\sjump{u_h}\|_{L^{\infty}(e)} $ taking into account $u=0$ on $\partial \Omega$.\\

${\color{blue}\bullet}$
Next, we provide the proof of the estimate \eqref{eq:Eff2}.
Let $e \in \Eho$ and $\omega_e=\overline{T}_+ \cup \overline{T}_-$, where $T_+$ and $T_-$ are elements sharing the face e. Let $b_e\in \mathbb{P}^{4d}(\omega_e)\cap H^2_0(\omega_e)$ denote the edge bubble function, which takes unit value at the center of $e$ and $\|b_e\|_{L^{\infty}(\omega_e)}=1$.
\par
\noindent
Define $\phi_e= \sjump{\nabla u_h} b_e$ on $\omega_e$ and let $\phi$ be the extension of $\phi_e$ by zero to $\bar \Omega$, clearly $\phi\in H^2_0(\Omega)$. 
Using the inverse estimate \eqref{eq:inverse3}, we have
\begin{align} \label{eq:eq1}
h_e\|\sjump{\nabla u_h}\|_{L^{\infty}(e)}&\lesssim h_e^{\frac{3-d}{2}} \|\sjump{\nabla u_h}\|_{L^{2}(e)}.
\end{align}
To bound the right hand side of the above estimate, a use of equivalence of norms in finite dimension, integration by parts and \eqref{eq:Gh2} yields,
\begin{align*}
\|\sjump{\nabla u_h}\|_{L^{2}(e)}^2&\lesssim \int_e \sjump{\nabla u_h} \phi_e ~ds=\int_{\omega_e} \nabla u_h \cdot \nabla \phi_e~ds
%& =-\int_{\omega_e} \nabla u_h \cdot \nabla \phi_e~ds\\
 =a_h(u_h,\phi)\\
&\lesssim (f-\sigma_h,\phi)-\langle G_h,\phi \rangle -\tilde{b}_h(u_h,\phi).
\end{align*}
\noindent
Note that, as in \eqref{eq:bhNb}, we have the following estimate of the term $\tilde{b}_h(u_h,\phi)$ :
\begin{align}\label{eq:bnNb1}
\tilde{b}_h(u_h,\phi) \lesssim \|\sjump{u_h}\|_{L^{\infty}(\partial \omega_e)}\Big( \sum_{T \in \omega_e} h_T^{-2}\| \phi_e\|_{L^1(T)} \Big).
\end{align}
Therefore, using equations \eqref{eq:eq1}, H\"older inequality, inverse estimates \eqref{eq:inverse} and the structure of the bubble function $b_e$, we have
\begin{align*}
h_e^2\|\sjump{\nabla u_h}\|_{L^{\infty}(e)}^2&\lesssim h_e^{{3-d}} \Big((f-\sigma_h,\phi)-\langle G_h,\phi \rangle -\tilde{b}_h(u_h,\phi) \Big)\\
& \lesssim h_e^{{3-d}} \Big( \|{f}-\sigma_h\|_{L^{\infty}(\omega_e)} \|\phi_e\|_{L^1(\omega_e)} + \|G_h\|_{-2,\infty,\omega_e} |\phi_e|_{W^{2,1}(\omega_e)} \\ &\qquad + h_e^{-2}\|\sjump{u_h}\|_{L^{\infty}(\partial \omega_e)} \| \phi_e\|_{L^1(\omega_e)} \Big)\\
& \lesssim h_e^{{3-d}} \Big( h_e^d \|{f}-\sigma_h\|_{L^{\infty}(\omega_e)} \|\phi_e\|_{L^{\infty}(\omega_e)} + h_e^{d-2}\|G_h\|_{-2,\infty,\omega_e} \|\phi_e\|_{L^{\infty}(\omega_e)} \\ &\qquad + h_e^{d-2} \|\sjump{u_h}\|_{L^{\infty}(\partial \omega_e)} \|\phi_e\|_{L^{\infty}(\omega_e)} \Big)\\
& \lesssim h_e \Big( h_e^2 \|{f}-\sigma_h\|_{L^{\infty}(\omega_e)}  + \|G_h\|_{-2,\infty,\omega_e}  +  \|\sjump{u_h}\|_{L^{\infty}(\partial \omega_e)}  \Big) \|\sjump{\nabla u_h}\|_{L^{\infty}(e)}.
\end{align*}

We then deduce the estimate \eqref{eq:Eff2} using the estimates \eqref{eq:Eff1}, \eqref{eq:EffJump} and \eqref{eq:Ghbound}.\\

%%%%%%%%%%%%%%%%
%${\color{blue}\bullet}$ Here, we claim to prove the estimate \eqref{eq:Eff3}.
%Let $T \in \mathbb{C}_h \cup \mathbb{M}_h$ and $\tilde{f} \in \mathbb{P}_0(T)$ be arbitrary. By using the inverse estimate and H\"older inequality, we have,
%\begin{align*}
%h_T^2\|\nabla \sigma_h\|_{L^{d}(T)}&=h_T^2\|\nabla (\sigma_h-\tilde{f})\|_{L^{d}(T)}\\
%& \leq h_T\|\sigma_h-\tilde{f}\|_{L^{d}(T)}\\
%& \leq h_T^2 \|f-\sigma_h\|_{L^{\infty}(T)} +Osc(f,T)).
%\end{align*}
%Hence, the estimate \eqref{eq:Eff3} follows using \eqref{eq:Eff1}.\\
%%%%%%%%%%%%%%%%%%

${\color{blue}\bullet}$ Now, we provide the proof of the estimate \eqref{eq:Eff4}. Let $T \in \cT_h$ be such that $T \cap \{\sigma_h <0\} \neq \emptyset$.
\par \noindent
 Using triangle inequality, we find
\begin{align*}
\|(u_h-\chi)^+\|_{L^{\infty}(T)} \leq \|u_h-u_h^{conf}\|_{L^{\infty}(T)} +\|(u_h^{conf}-\chi)^+\|_{L^{\infty}(T)}.
\end{align*}
\par \noindent
The first term in the right hand side of above estimate can be controlled by using \eqref{eq:apen3}. We estimate the second term as follows: In view of remark \ref{rmk:r1}, we have that either $T \in \mathbb{C}_h $ or $T \in \mathbb{M}_h $. If   $T \in \mathbb{C}_h $, then using the definition of $E_h(\cdot)$, we have $u_h^{conf}=\chi_h$ on $T$. Therefore,
\begin{equation} \label{eq:Me1}
\|(u_h^{conf}-\chi)^+\|_{L^{\infty}(T)}= \|(\chi_h-\chi)^+\|_{L^{\infty}(T)}.
\end{equation}

%%%%%%%%%%%
%Otherwise, if $T \in \mathbb{M}_h$, and $T \subset \Omega$ then
%\begin{align}\label{eq:Me2}
%\|(u_h^{conf}-\chi)^+\|_{L^{\infty}(T)}\leq\|( u_h^{conf}-\chi_h)\|_{L^{\infty}(T)}+ \|(\chi_h-\chi)^+\|_{L^{\infty}(T)}.
%\end{align}
%We claim that there exist an interior node $p \in \cV_T$ such that $u_h^{conf}(p)=\chi_h(p)$. We prove it by contradiction.
%Suppose $u_h^{conf}(p)>\chi_h(p)$ for all $p \in T \cap \cV_h$ then by the definition of enriching map we have that $u_h(p)>\chi_h(p)$ for all $p \in T \cap \cV_h$ which implies $\sigma_h(p)=0$ for all $p \in T \cap \cV_h$ which further implies $\sigma_h=0$ on $T$  (since $\sigma_h$ is linear on $T$), which contradicts $T \cap \{\sigma_h <0\} \neq \emptyset$.
%%%%$p \in T \cap \cN_h$
%\par \noindent
%Thus, we have an interior node $p \in \cV_T$ such that $u_h^{conf}(p)=\chi_h(p)$. This allows us to use the discrete quadratic growth property of a non-negative function [\cite{NSV:2002:Ptwise}, Lemma 6.4], we get
%\begin{align}\label{eq:Me3}
%\|(u_h^{conf}-\chi)^+\|_{L^{\infty}(T)}\leq h_T \|\sjump{\nabla u_h^{conf}}\|_{L^{\infty}(w_T)}+h_T \|\sjump{\nabla \chi_h}\|_{L^{\infty}(w_T)}.
%\end{align}
%Combining estimates \eqref{eq:Me1}, \eqref{eq:Me2} and \eqref{eq:Me3} and using \eqref{eq:Eff2}, we deduce  \eqref{eq:Eff4}.\\
%
%%%%%%%%%%%%%%%
\par \noindent
Otherwise, if $T \in \mathbb{M}_h$,  then
\begin{align}\label{eq:Me2}
\|(u_h^{conf}-\chi)^+\|_{L^{\infty}(T)}\leq\|( u_h^{conf}-\chi_h)^+\|_{L^{\infty}(T)}+ \|(\chi_h-\chi)^+\|_{L^{\infty}(T)}.
\end{align}
Note that, since $T \in \mathbb{M}_h$, there exists a node $z \in \cV_T$ such that $u_h(z)=\chi_h(z)$  which implies $u_h^{conf}(z)=\chi_h(z)$.\\
\par \noindent
We claim that there exist an interior node $z \in \cV_T$ such that $u_h^{conf}(z)=\chi_h(z)$. This claim trivially holds if $T \subset \Omega$. If $T \cap \partial \Omega \neq \emptyset$, we have the following two possibilities.\\

 (i)  $T \cap \partial \Omega$ is a node say $T \cap \partial \Omega=\{p\}$. If $\chi_h(p)=\chi(p)<0$ then $0=u_h^{conf}(p)>\chi_h(p)$. Hence there exists an interior node $z \in \cV_T$ such that $u_h^{conf}(z)=\chi_h(z)$.\\

(ii)  $T \cap \partial \Omega$ is an edge/face say $T \cap \partial \Omega=e$. If $\chi(p)<0$ for all $p \in e$ then $u_h^{conf}>\chi_h$ on e since $u_h^{conf}=0 $ on $\partial \Omega$.  And hence there exists an interior node $z \in \cV_T$ such that $u_h^{conf}(z)=\chi_h(z)$.\\

\par \noindent
Thus with the assumption that $\chi_h<0$ on $T \cap \partial \Omega$, we have an interior node $p \in \cV_T$ such that $u_h^{conf}(p)=\chi_h(p)$. This allows us to use the quadratic growth property of a non-negative discrete function (see \cite[Section 2]{Baiocchi:1977:VI}, \cite[Lemma 6.4]{NSV:2002:Ptwise}), which yields
\begin{align}\label{eq:Me3}
\|(u_h^{conf}-\chi_h)^+\|_{L^{\infty}(T)}&\lesssim  \max_{e \in \cE_h: e \cap w_T \neq \emptyset} \left(\|h_e\sjump{\nabla u_h^{conf}}\|_{L^{\infty}(e)}+\|h_e\sjump{\nabla \chi_h}\|_{L^{\infty}(e)} \right) \notag\\
&\lesssim  \displaystyle{\max_{e \in \cE_h: e \cap w_T \neq \emptyset}} \left(\|h_e\sjump{\nabla (u_h^{conf}-u_h)}\|_{L^{\infty}(e)}+\|h_e\sjump{\nabla u_h}\|_{L^{\infty}(e)}+\|h_e\sjump{\nabla \chi_h}\|_{L^{\infty}(e)} \right)
\end{align}
Combining estimates \eqref{eq:Me1}, \eqref{eq:Me2}, \eqref{eq:Me3} and using \eqref{eq:Eff2}, \eqref{eq:EffJump} and \eqref{eq:apen3}, we deduce  \eqref{eq:Eff4}.

{
\begin{remark}
The definition of  $\sigma_h$ in this article is very local to each
simplex, thanks to the DG framework, which is computationally simpler. In
\cite{NSV:2002:Ptwise}, the definition of $\sigma_h$ at the boundary
vertices is modified (see \cite[page 169]{NSV:2002:Ptwise} ) by imposing  full
contact condition on the patches of those vertices, which in turn helps
them to prove the efficiency of the term
$\|(u_h-\chi)^+\|_{L^\infty(\{\sigma_h<0\})}$ whenever the discrete
free-boundary set intersects with the simplices that touch the boundary $\partial \Omega$ (see \cite[page 186]{NSV:2002:Ptwise}). Particularly, it is observed that the difficulty
arises when the obstacle $\chi=0$ on $\partial\Omega$ (given that the
solution $u=0$ on $\partial\Omega$ and the compatibility condition
$\chi\leq 0$ on $\partial\Omega$). In many research works the assumption
$\chi<0$ on $\partial\Omega$ is made, hence this condition used in this
article in proving the efficiency of the term
$\|(u_h-\chi)^+\|_{L^\infty(\{\sigma_h<0\})}$  is not restrictive. This also
avoids fine tuning the definition of $\sigma_h$ on the boundary nodes and
makes the computations simpler. Further, it enables us to prove an estimate
for $\sigma-\sigma_h$ in a dual norm, unlike in
\cite{NSV:2002:Ptwise} estimate is provided for $\sigma-\tilde\sigma_h$,
where  $\tilde\sigma_h$ is a modified Lagrange multiplier which is not
computable.

\end{remark}

%\begin{remark}
%Although, we have the efficiency  of the term $\|(u_h^{conf}-\chi)^+\|_{L^{\infty}(T)}$ with the assumption that $\chi<0$ on $T \cap \partial \Omega$ when the discrete free boundary intersects $\partial \Omega$, but this assumption is not very restrictive as in many applications we have $\chi <0$ on $\partial \Omega$.
%\end{remark}

}
\end{proof}
%%%%%%%%%%%%%%%%%%%%%%%%%

%%%%%%%%%%%%%%%%%%%%%%%%%%%%%%%%%%%%
%%%%%%%%%%%%%%%%%%%%%%%%%%%%%%%%%%%%
\section{Numerical Experiments}\label{sec:Numerics}
In this section, we present numerical results to
demonstrate the performance of a posteriori error estimator derived in Section \ref{sec:Reliaibility}. We have used the following algorithm for the adaptive refinement
\begin{equation*}
{\bf SOLVE}\longrightarrow  {\bf ESTIMATE} \longrightarrow {\bf
MARK}\longrightarrow {\bf REFINE}
\end{equation*}
In the SOLVE step, the primal-dual active set strategy
\cite{HK:2003:activeset} is used to solve the discrete obstacle problem \eqref{eq:FEM}. The algebraic setting of primal-dual active set method in accordance of the discrete problem \eqref{eq:FEM} is discussed in detail in  \cite{AGP:2021:PC}, and more numerical experiments can be found therein.
We compute the error estimator ${\eta}_h$ on each element $T \in \cT_h$ and
use maximum marking strategy  with
parameter $\theta=0.3$ to mark the elements for refinement.
Finally, we refine the mesh using the newest vertex bisection algorithm
and obtain a new adaptive mesh. For all the examples below, the penalty parameter $\gamma=25$.

\par
\noindent
We discuss numerical results for various test examples:\\
\par
\noindent

%%%%%%%%%%%%%%%%%%%%%%%
\textbf{Example1.} In this example, we consider $\Omega=(-1.5,1.5)^2$, $f=-2$,
$\chi=0$. The exact solution is given by:
\begin{equation*}
u := \left\{ \begin{array}{ll} r^2/2-\text{ln}(r)-1/2, & r\geq 1\\\\
0, & \text{otherwise},
\end{array}\right.
\end{equation*}
 where
$r^2=x^2+y^2$ for $(x,y)\in \R^2$.

%We report the numerical results for the SIPG  and NIPG methods for this test example.
Figure \ref{fig:SIPGer}  illustrate the behavior of the pointwise error $\|u-u_h\|_{L^{\infty}(\Omega)}$ and  a posteriori error estimator ${\eta}_h$ with respect to degrees of freedom (DOFs) for SIPG method. We observe that both error and estimator converge with the optimal rate DOFs$^{-1}$. This figure also depict the convergence behavior of individual estimators $\eta_i, i=1,2,3,4,6$. Note that, for this example $\eta_5$ is zero since $\chi=\chi_h=0$. Figure \ref{fig:MeshS}  represent efficiency indices with respect to degrees of freedom(leftmost sub-figure), adaptive mesh (center) and the discrete contact set (rightmost sub-figure) at refinement level 10.
In the contact region, the estimator should depend on the obstacle function $\chi$ while in the non-contact region it should be dictated by the load function $f$. Since the obstacle function is zero, we observe almost no mesh refinement in the contact zone.  \\

%This figure also depict the equivalence between the error and the estimator.
%  \ref{fig:IIPGer} \ref{fig:EffI}  \ref{fig:MeshI}, IIPG \eqref{iipg1},
%%%%%%%%%%%%%%%%%%%%%%%%%%%%%%%%%%%%%%%%%%%%%%%%%%%%%%
\begin{figure}[t]
\centering
\begin{subfigure}{.5\textwidth}
  \centering
  \includegraphics[width=7cm,height=6cm]{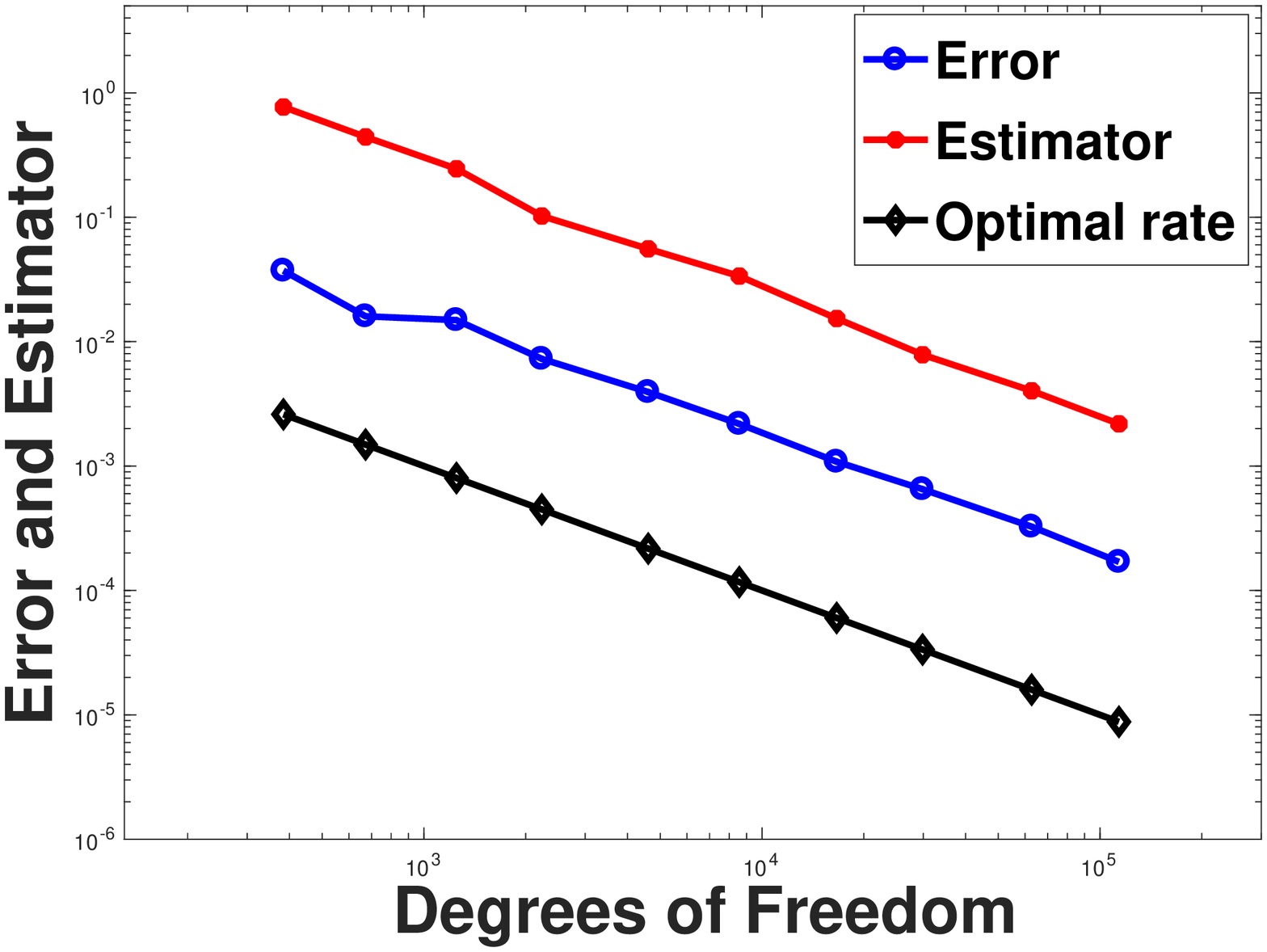}
%   \label{fig:sub1}
\end{subfigure}%
\begin{subfigure}{.5\textwidth}
  \centering
  \includegraphics[width=7cm,height=6cm]{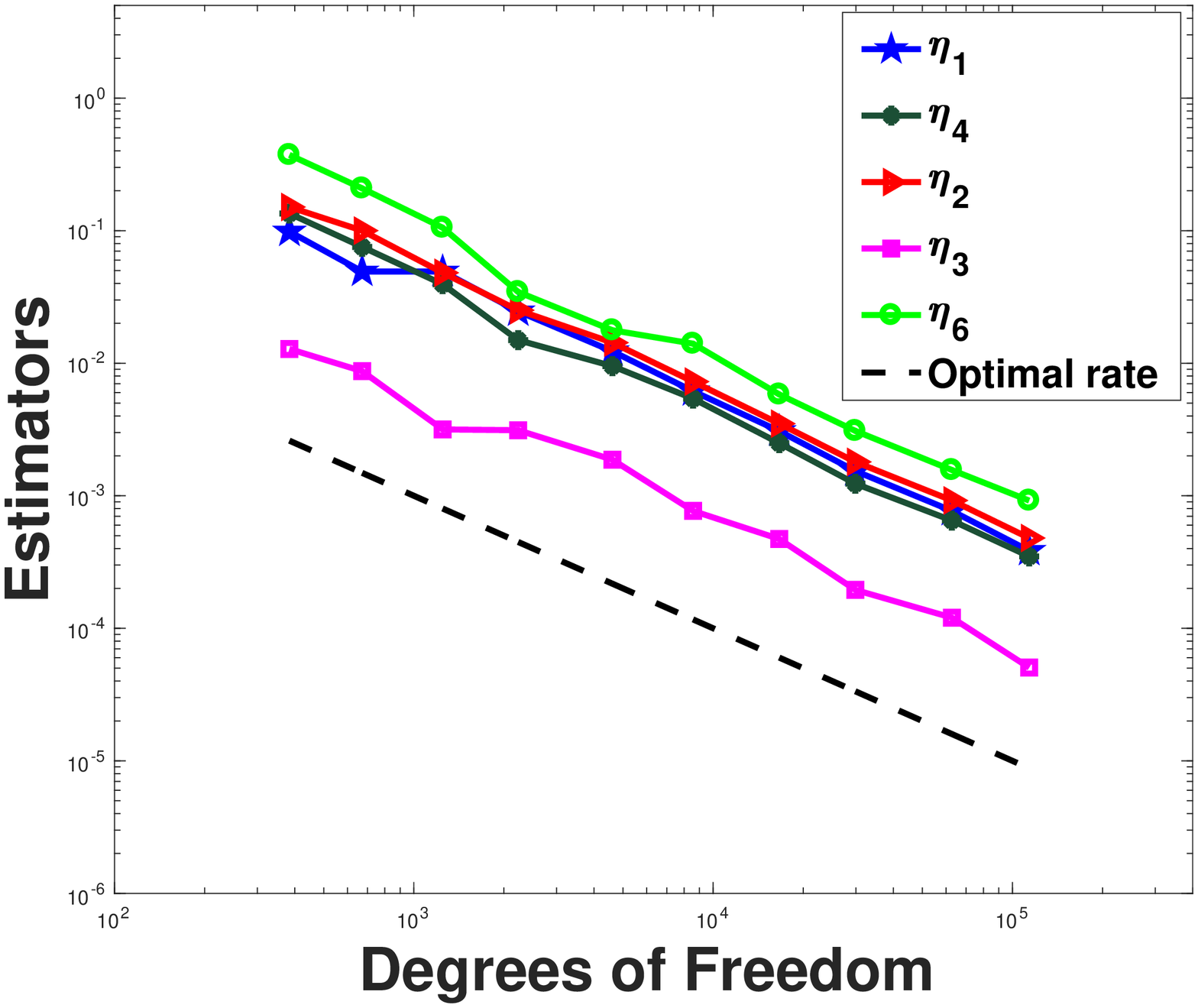}
%  \label{fig:sub2}
\end{subfigure}
 \caption{{\large
Error and Estimator  for Example 1}} \label{fig:SIPGer}
\end{figure}
%%%%%%%%%%%%%%%%%%%%%%
\begin{figure}[!ht]
\vskip -0.1cm
\centering
 \includegraphics[width=0.3\textwidth]{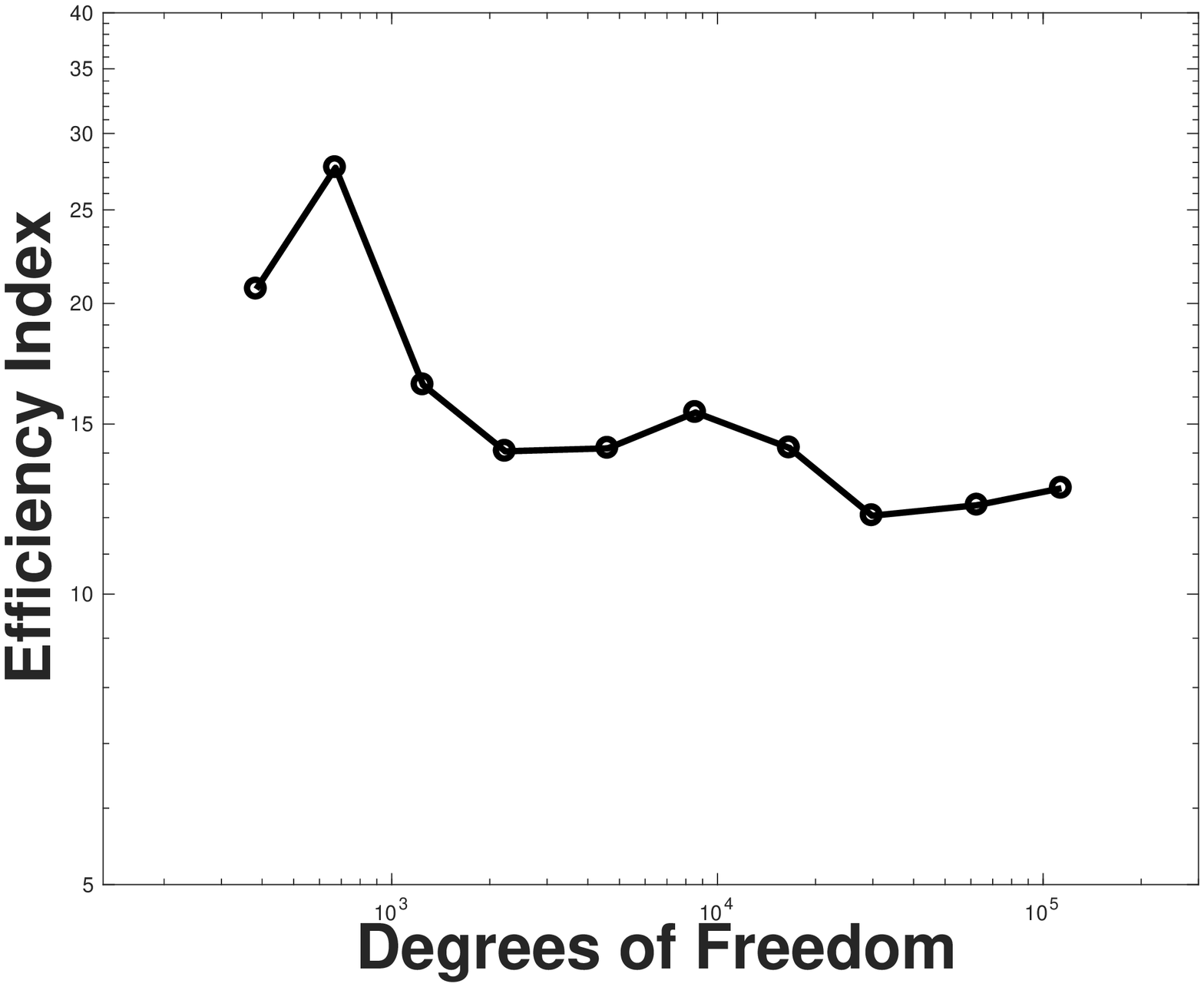}
 \hskip 0.04cm
 \includegraphics[width=0.3\textwidth]{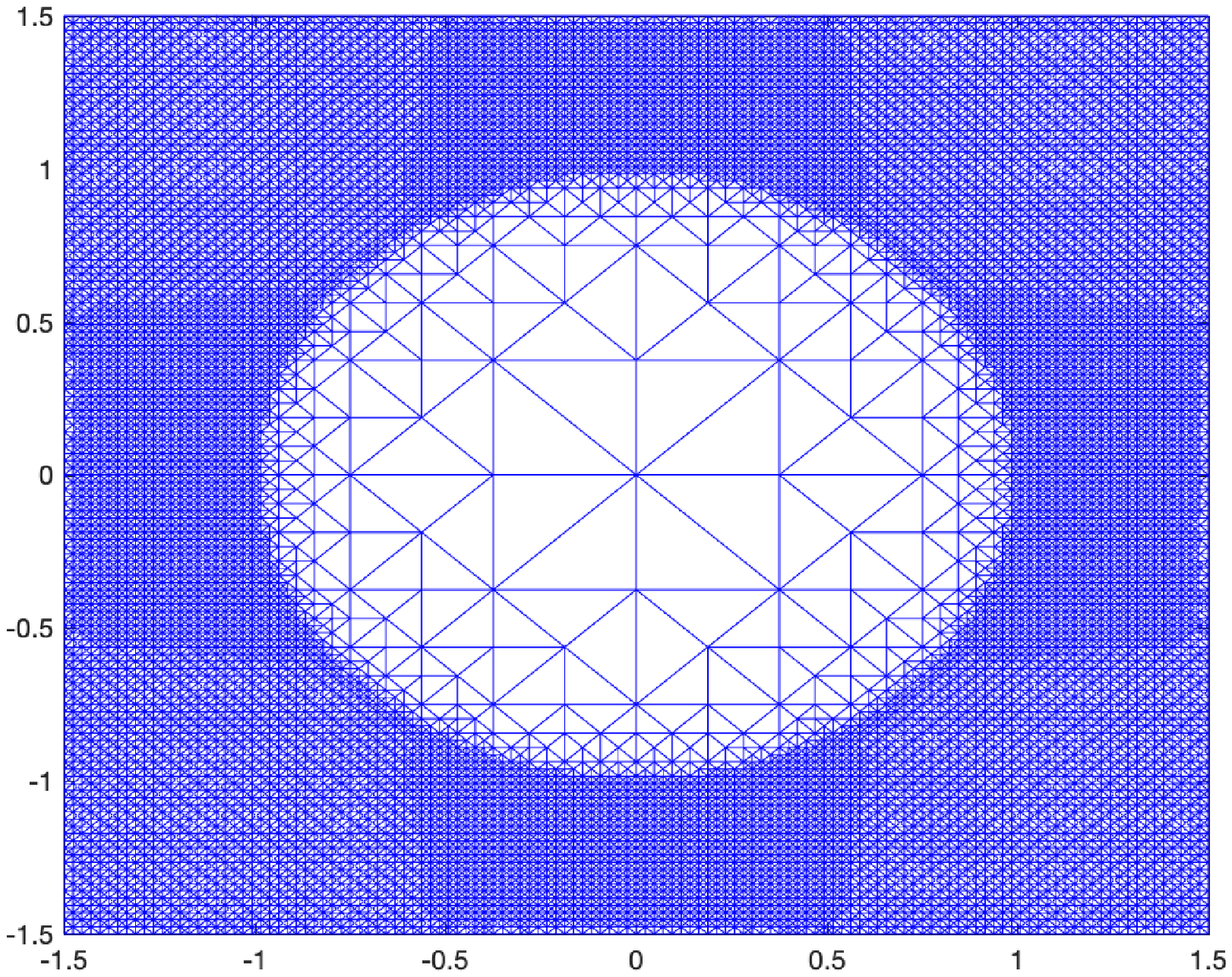}
  \hskip 0.04cm
 \includegraphics[width=0.3\textwidth]{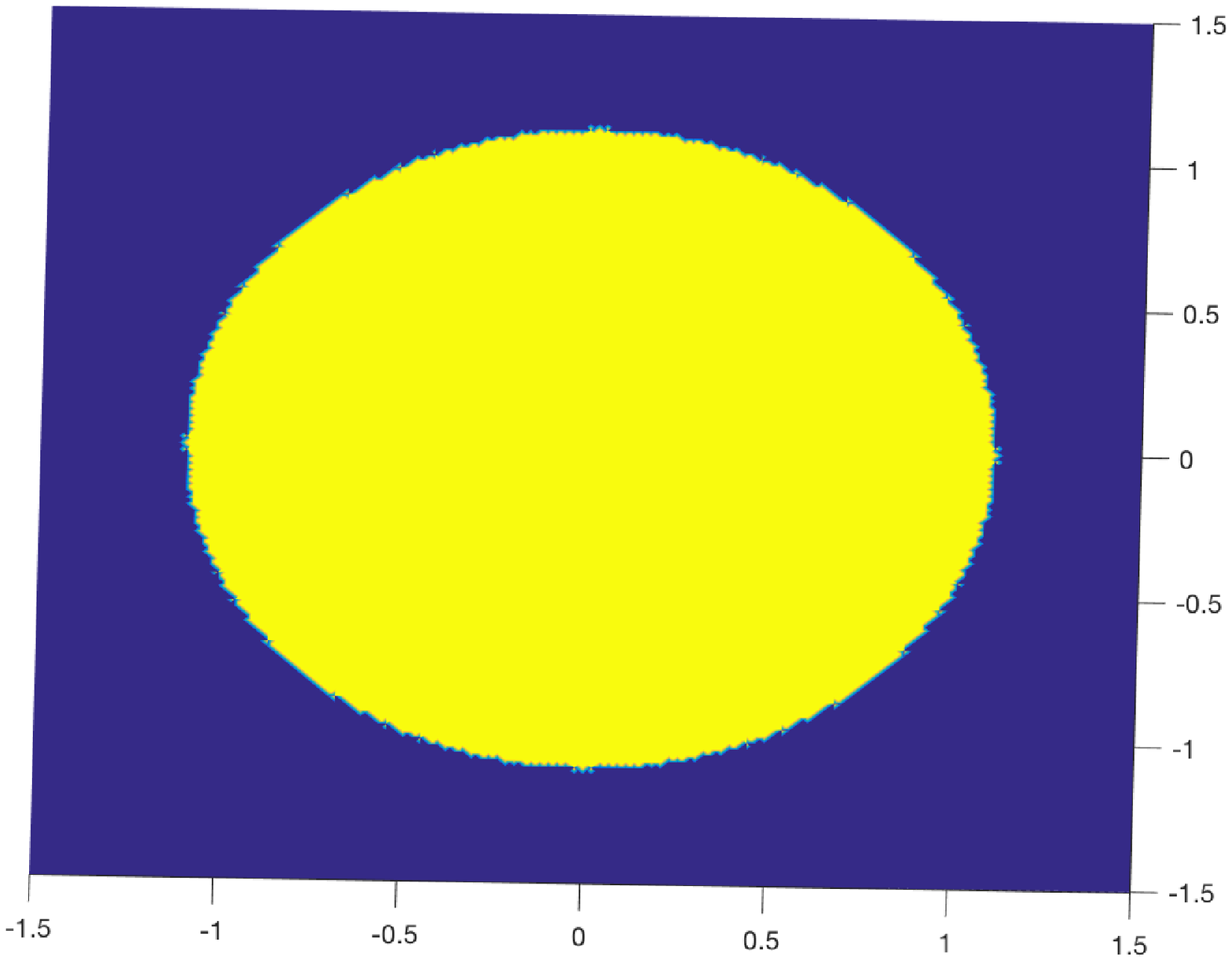}
 \vskip 0.1cm
  \caption{{\large
Efficiency Index, adaptive Mesh and discrete active set (yellow part)  for Example 1}} \label{fig:MeshS}
\end{figure}

%%%%%%%%%%%%%%%%%%%%%%%%
%\begin{figure}[t]
%%\centering
%\begin{subfigure}{.5\textwidth}
%   \includegraphics[width=6cm,height=5cm]{FinalFigs/Ex1_EffIndx.eps}
%%   \label{fig:sub1}
%\end{subfigure}%
%%\centering
%%\hspace{-0.9cm}
%\hskip 0.4cm
%\begin{subfigure}{.5\textwidth}
%%  \centering
%  \includegraphics[width=6cm,height=5cm]{FinalFigs/Ex1_mesh.eps}
%%   \label{fig:sub1}
%\end{subfigure}%
%%\centering
%%\hspace{-0.9cm}
%\hskip 0.4cm
%\begin{subfigure}{.5\textwidth}
%%  \centering
%  \includegraphics[width=6cm,height=5cm]{FinalFigs/Ex1_SIPG_ActvSet1.eps}
%%  \label{fig:sub2}
%\end{subfigure}
% \caption{{\large
%Efficiency Index, Adaptive Mesh and discrete active set (yellow part) for SIPG  method for Example 1}} \label{fig:MeshS}
%\end{figure}
%
%%%%%%%%%%%%%%%%%%%%%%%%%%%%%%%%%%%%%%%%%%%%%%%%%%%%%%%%
\noindent
\textbf{Example 2.} To illustrate the efficacy of adaptive refinement, we consider this example with non-convex domain \cite{BCH:2007:VI}. Therein, we have the following data
\begin{align*}
\Omega &= (-2,2)^2 \setminus [ 0, 2) \times (-2, 0], \quad \chi=0, \\
u &= r^{2/3} sin(2\theta/3 ) \gamma_1(r),\\
f&=-r^{2/3} sin(2 \theta/3) \Big(\frac{\gamma_1^{'}(r)}{r} + \gamma_1^{''}(r) \Big)-\frac{4}{3}r^{-1/3}sin(2 \theta /3) \gamma_1^{'}(r)-\gamma_2(r)\\
\text{where}\\
\gamma_1(r) &= \begin{cases}  1, \quad  \tilde{r}<0 \\
-6 \tilde{r}^5 + 15 \tilde{r}^4 -10 \tilde{r}^3 +1 , \quad  0 \leq \tilde{r} <1 \\
                    0  , \quad \tilde{r} \geq 1,\\
\end{cases}\\
\gamma_2(r) &= \begin{cases} 0, \quad r \leq \frac{5}{4} \\
  1, \quad \mbox{otherwise}
 \end{cases}\\
 \text{with}\quad
 \tilde{r}= 2(r-1/4).
\end{align*}

The behavior of the true error and the error estimator for SIPG  method is depicted in Figure \ref{fig:SIPGerEx2}. 
This figure ensures the optimal convergence (rate DOFs$^{-1}$  ) of the error and the estimator together with the reliability of the estimator. The convergence behavior of single estimators $\eta_i, i=1,2,3,4,6$ in maximum norm is also depicted in \ref{fig:SIPGerEx2}. The efficiency of the estimator is shown in Figure \ref{fig:MeshSEx2} (leftmost sub-figure). The adaptive mesh refinement and the discrete contact set at refinement level 26 are also reported in Figure \ref{fig:MeshSEx2}. From these figures we observe that the estimator captures the singular behavior of the solution very well. The mesh refinement near the free boundary is higher because of the large jump in gradients. \\

%%%%%%%%%%%%%%%%%%%%%%%%%%%%%%%%%%%%%%%%%%%%%%%
\begin{figure}
\centering
\begin{subfigure}{.5\textwidth}
  \centering
  \includegraphics[width=6.5cm,height=6cm]{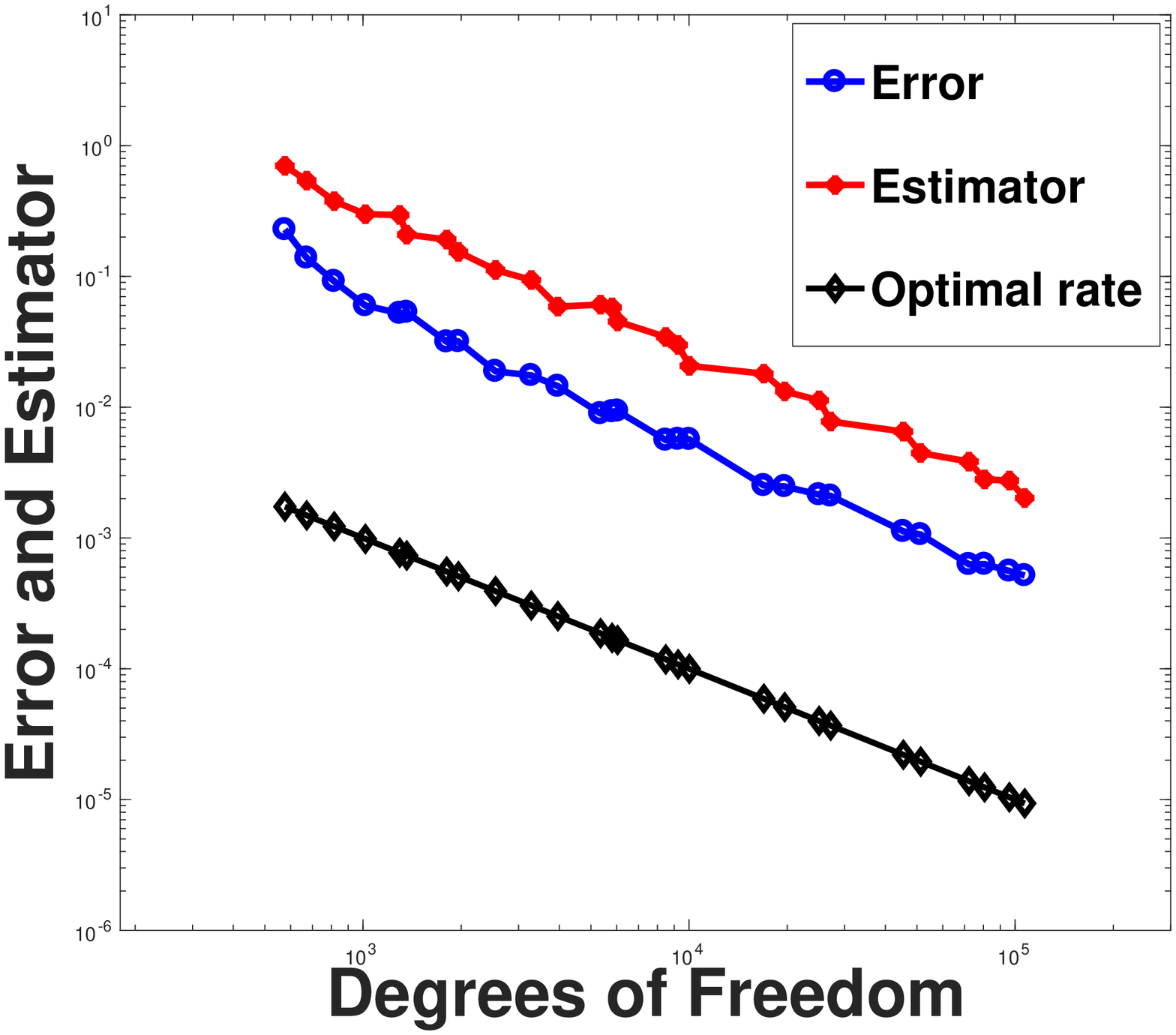}
%  \label{fig:sub1}
\end{subfigure}%
\begin{subfigure}{.5\textwidth}
  \centering
  \includegraphics[width=7cm,height=6cm]{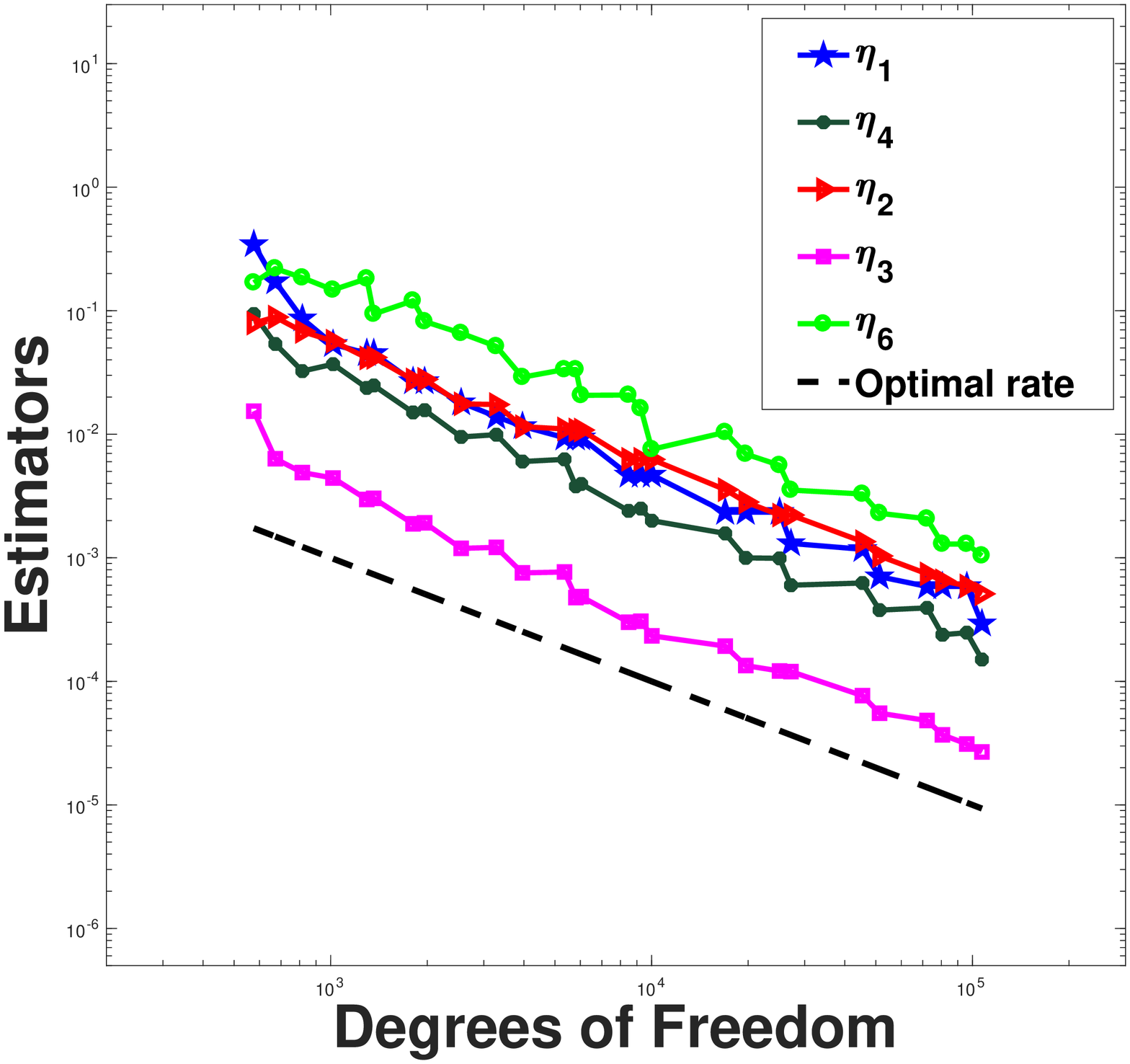}
%  \label{fig:sub2}
\end{subfigure}
\caption{\large
Error and Estimator  for Example 2}
\label{fig:SIPGerEx2}
\end{figure}

%%%%%%%%%%%%%%%%%%

\begin{figure}[!ht]
\vskip -0.1cm
\centering
 \includegraphics[width=0.3\textwidth]{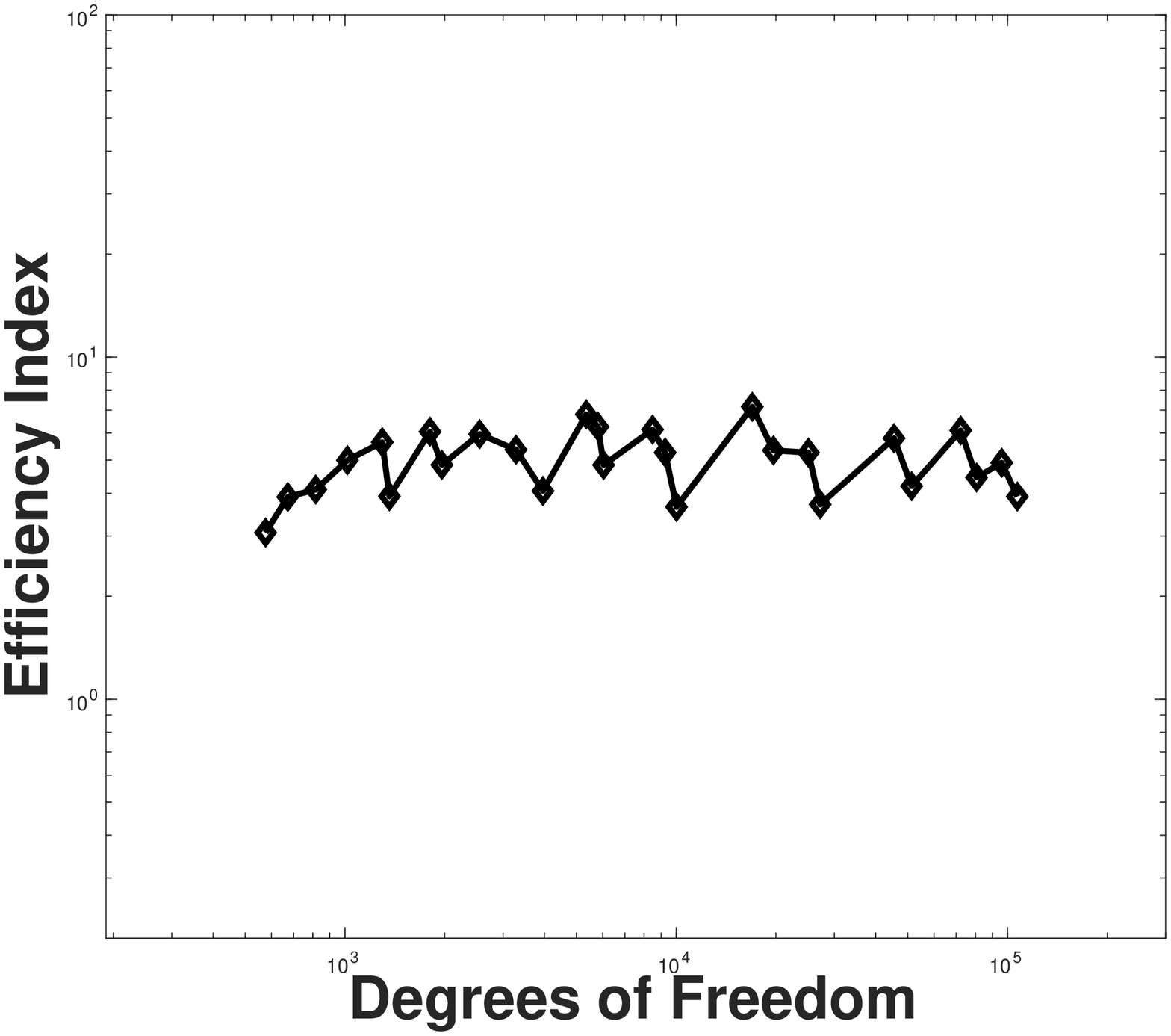}
 \hskip 0.04cm
 \includegraphics[width=0.3\textwidth]{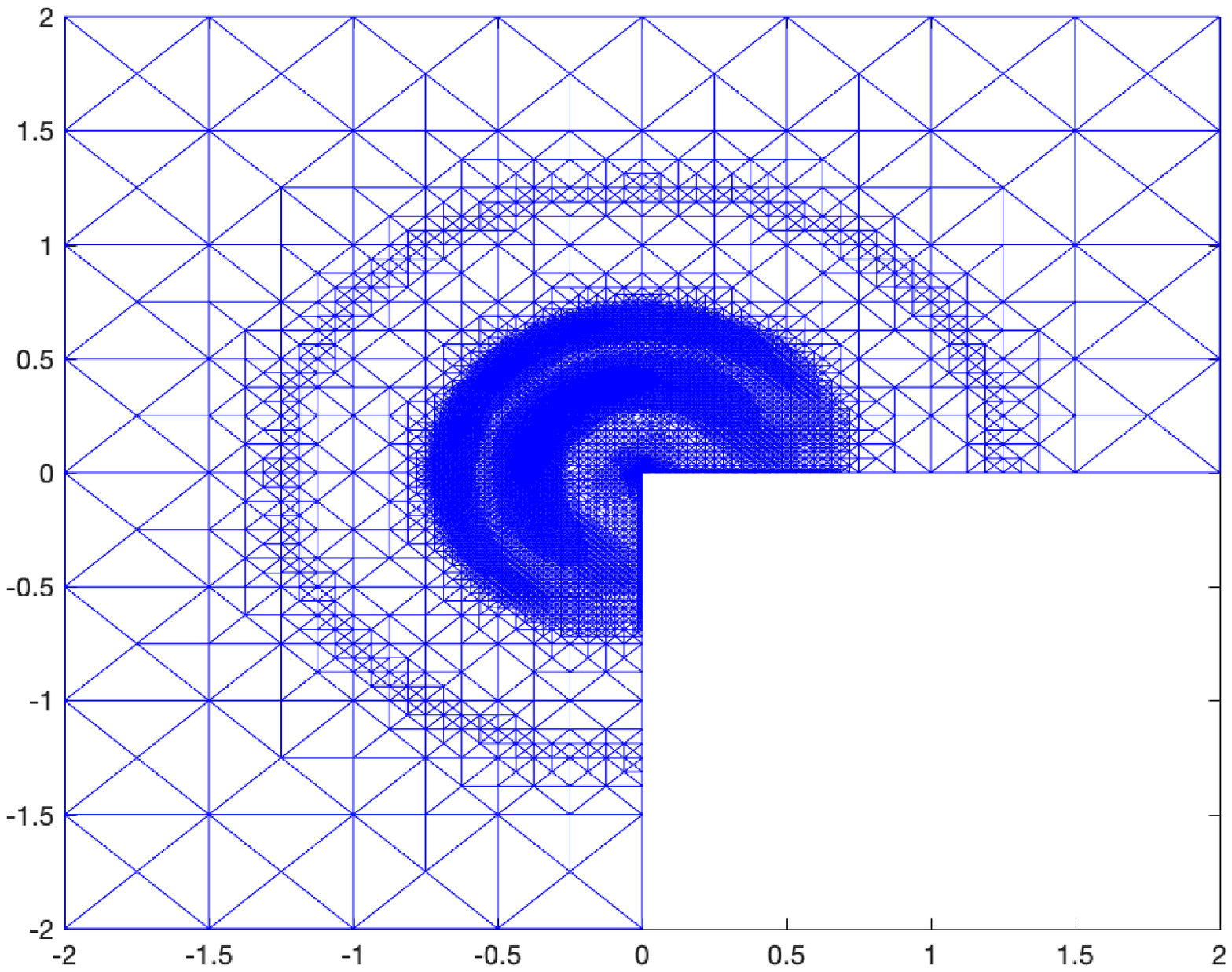}
  \hskip 0.04cm
 \includegraphics[width=0.3\textwidth]{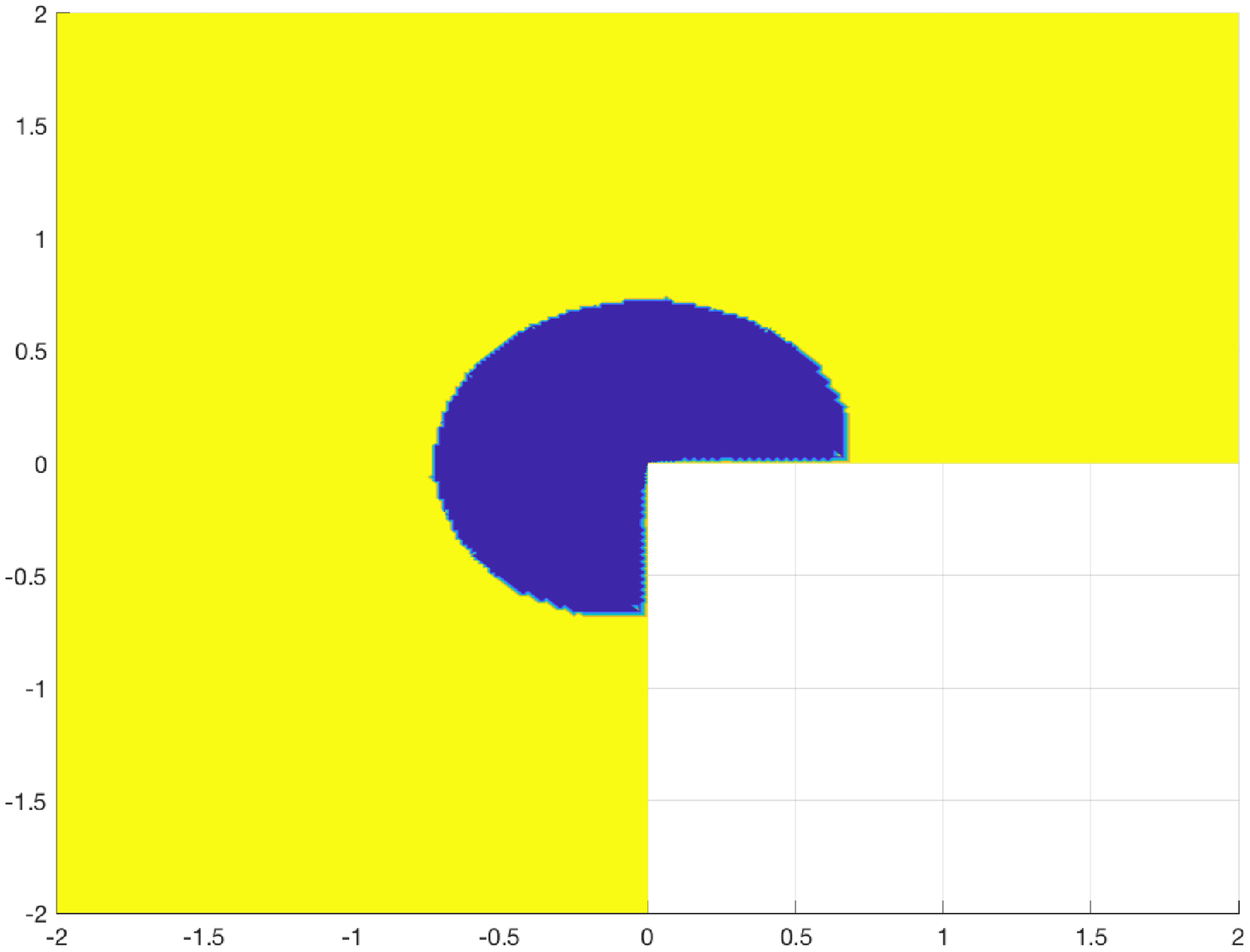}
 \vskip 0.1cm
  \caption{\large
Efficiency Index, adaptive Mesh and discrete active set (yellow part)  for Example 2}
\label{fig:MeshSEx2}
\end{figure}

%\begin{figure}
%\centering
%\begin{subfigure}{.5\textwidth}
%  \centering
%  \includegraphics[width=5cm,height=3cm]{FinalFigs/L-shaped_EffIndx_Compare.eps}
%%  \label{fig:sub2}
%\end{subfigure}
%\begin{subfigure}{.5\textwidth}
%  \centering
%  \includegraphics[width=5cm,height=3cm]{FinalFigs/L-shaped_mesh.eps}
%%  \label{fig:sub1}
%\end{subfigure}%
%\begin{subfigure}{.5\textwidth}
%  \centering
%  \includegraphics[width=5cm,height=3cm]{FinalFigs/Lshaped_ActvSet.eps}
%%  \label{fig:sub2}
%\end{subfigure}
%\caption{\large
%Efficiency Index, Adaptive Mesh and discrete active set (yellow part) for SIPG method for Example 2}
%\label{fig:MeshSEx2}
%\end{figure}
%%%%%%%%%%%%%%%%%%%%%%%%%%

\vspace{1cm}
\textbf{Example 3} In this example, we consider the following data from \cite{NSV:FullyLoc}:
\begin{align*}
\Omega &= (-2,2)\times (-1,1) ,\quad r^2=x^2+y^2,  \\
\chi &=10-6(x^2-1)^2-20 (r^2-x^2).\\
\end{align*}
The exact solution is not known for this example. Figure \ref{fig:EstmSEx6} illustrate the behavior of the error estimator $\eta_h$ together with the individual estimators $\eta_i, i=1,...6$ for the load function $f=0$ and $f=-15$. We clearly observe that in both cases the estimator converges with optimal rate (DOFs$^{-1}$). The adaptive mesh and discrete contact set at  refinement level 13 for $f=0$ and $f=-15$ are shown in figures  \ref{fig:MeshSEx6} and \ref{fig:ActvSEx6}, respectively. The graph of the obstacle can be viewed as two hills connected by a saddle. As the load $f$ increases  the change in the contact region can be observed from Figure \ref{fig:ActvSEx6} and as expected, we observe more refinement near the free boundaries.\\

\begin{figure}
\centering
\begin{subfigure}{.5\textwidth}
  \centering
  \includegraphics[width=7cm,height=6cm]{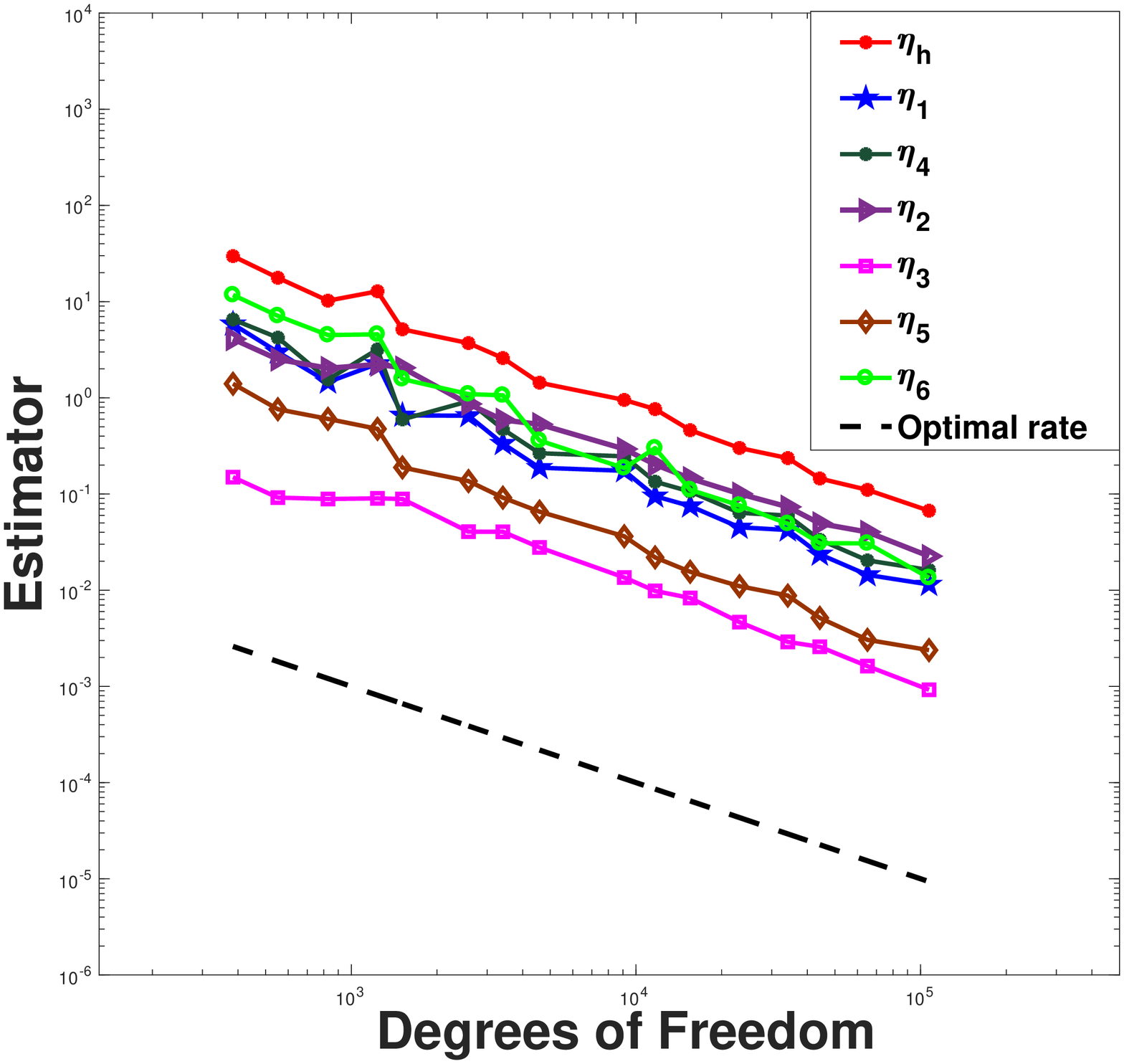}
  %\label{fig:sub1}
\end{subfigure}%
\begin{subfigure}{.5\textwidth}
  \centering
    \includegraphics[width=7cm,height=6cm]{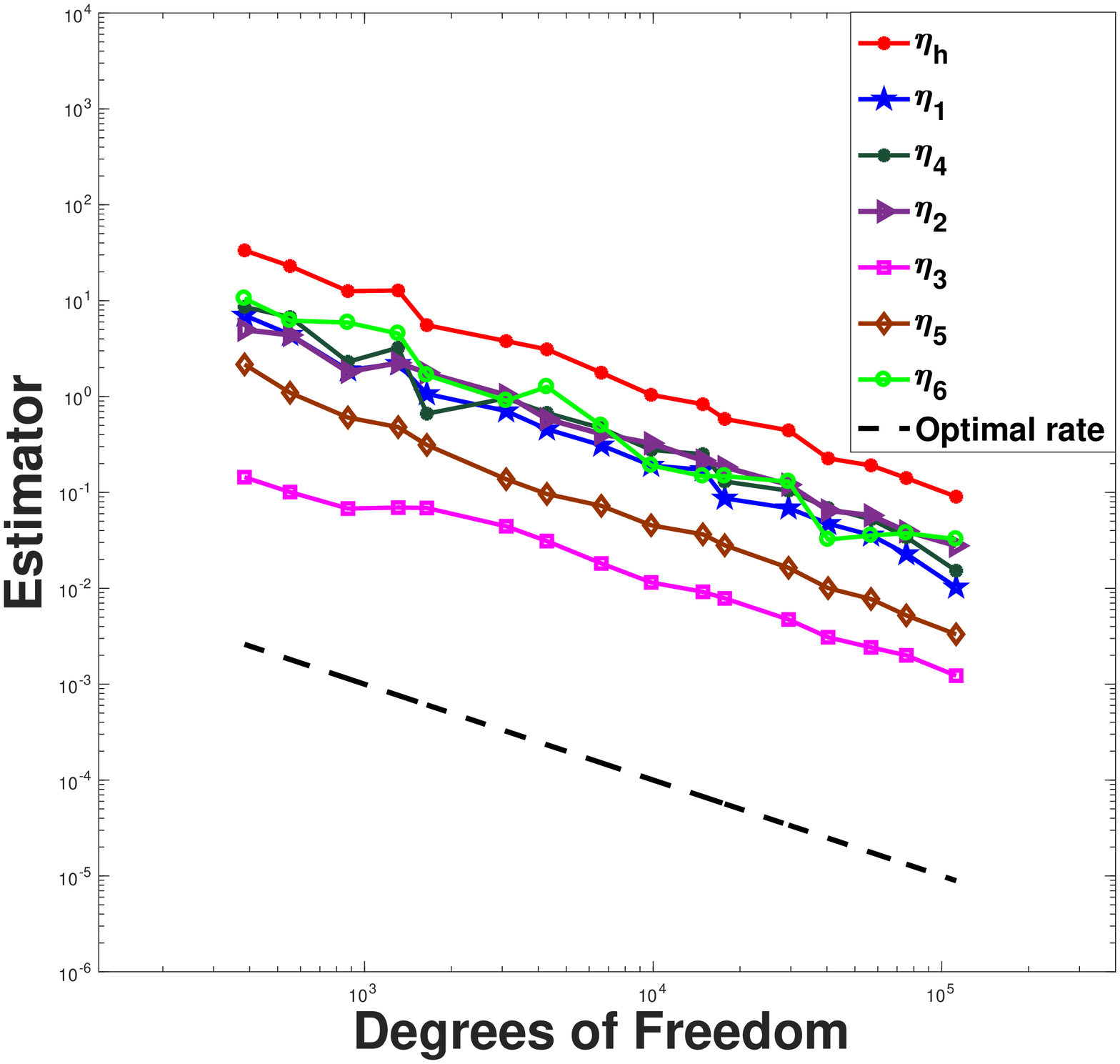}
 % \label{fig:sub2}
\end{subfigure}
\caption{
Estimator  for Example 3 for $f=0$ and $f=-15$}
\label{fig:EstmSEx6}
\end{figure}

\begin{figure}
\centering
\begin{subfigure}{.5\textwidth}
  \centering
  \includegraphics[width=7cm,height=6cm]{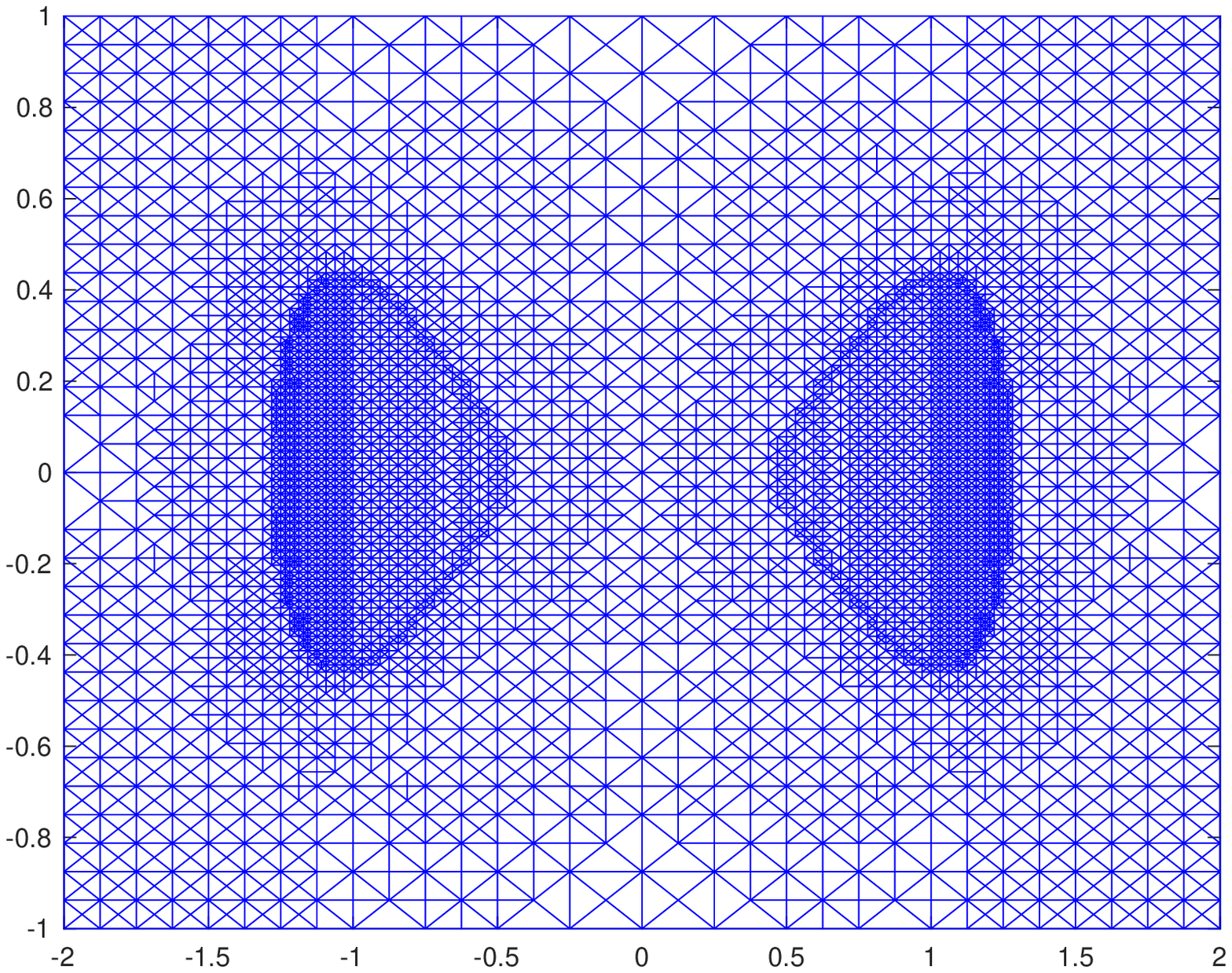}
  %\label{fig:sub1}
\end{subfigure}%
\begin{subfigure}{.5\textwidth}
  \centering
    \includegraphics[width=7cm,height=6cm]{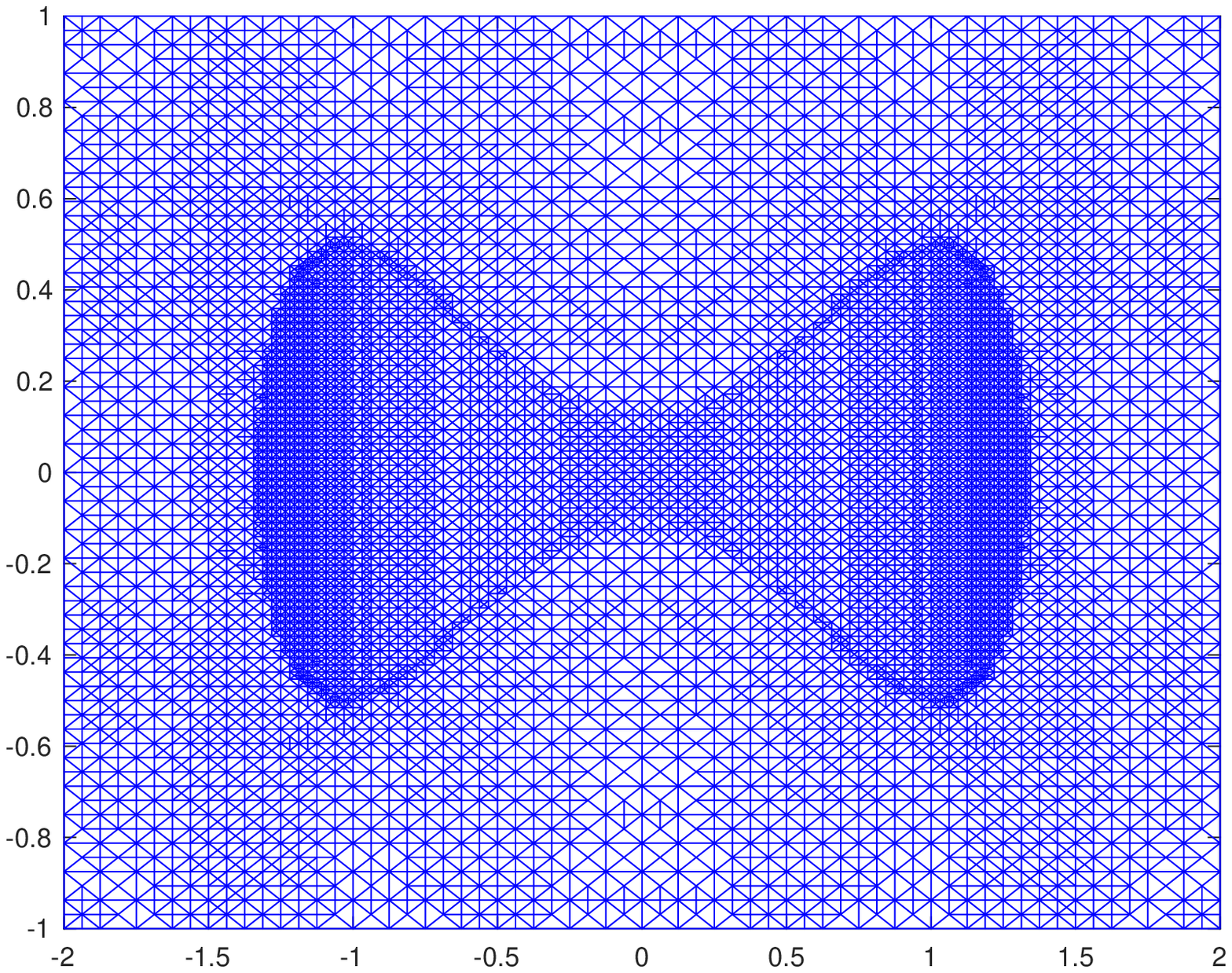}
 % \label{fig:sub2}
\end{subfigure}
\caption{
Adaptive mesh for Example 3 for $f=0$ and $f=-15$}
\label{fig:MeshSEx6}
\end{figure}

%%%%%%%%%
\begin{figure}
\centering
\begin{subfigure}{.5\textwidth}
  \centering
  
   \includegraphics[width=7cm,height=6cm]{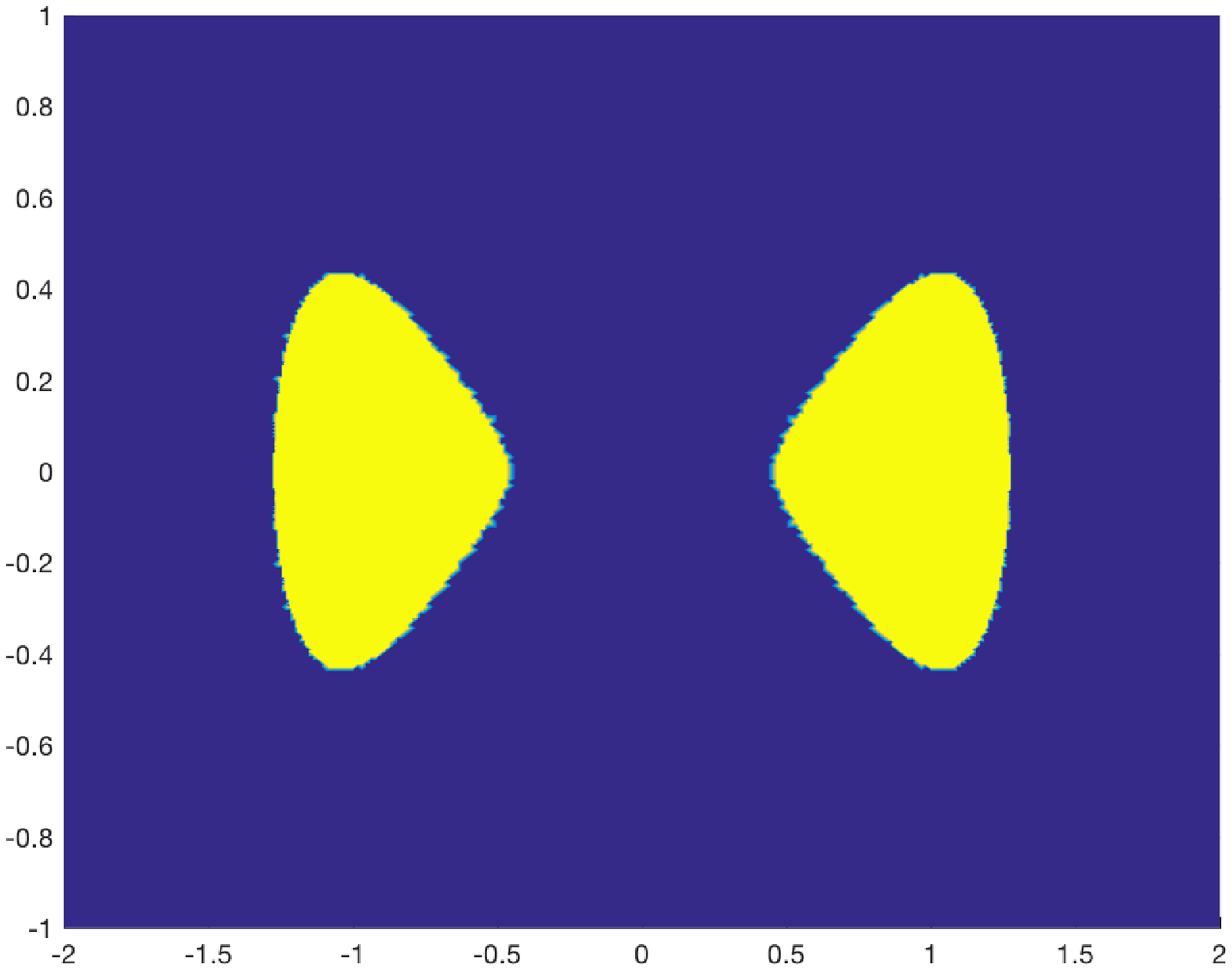}
  %\label{fig:sub1}
\end{subfigure}%
\begin{subfigure}{.5\textwidth}
  \centering
  \includegraphics[width=7cm,height=6cm]{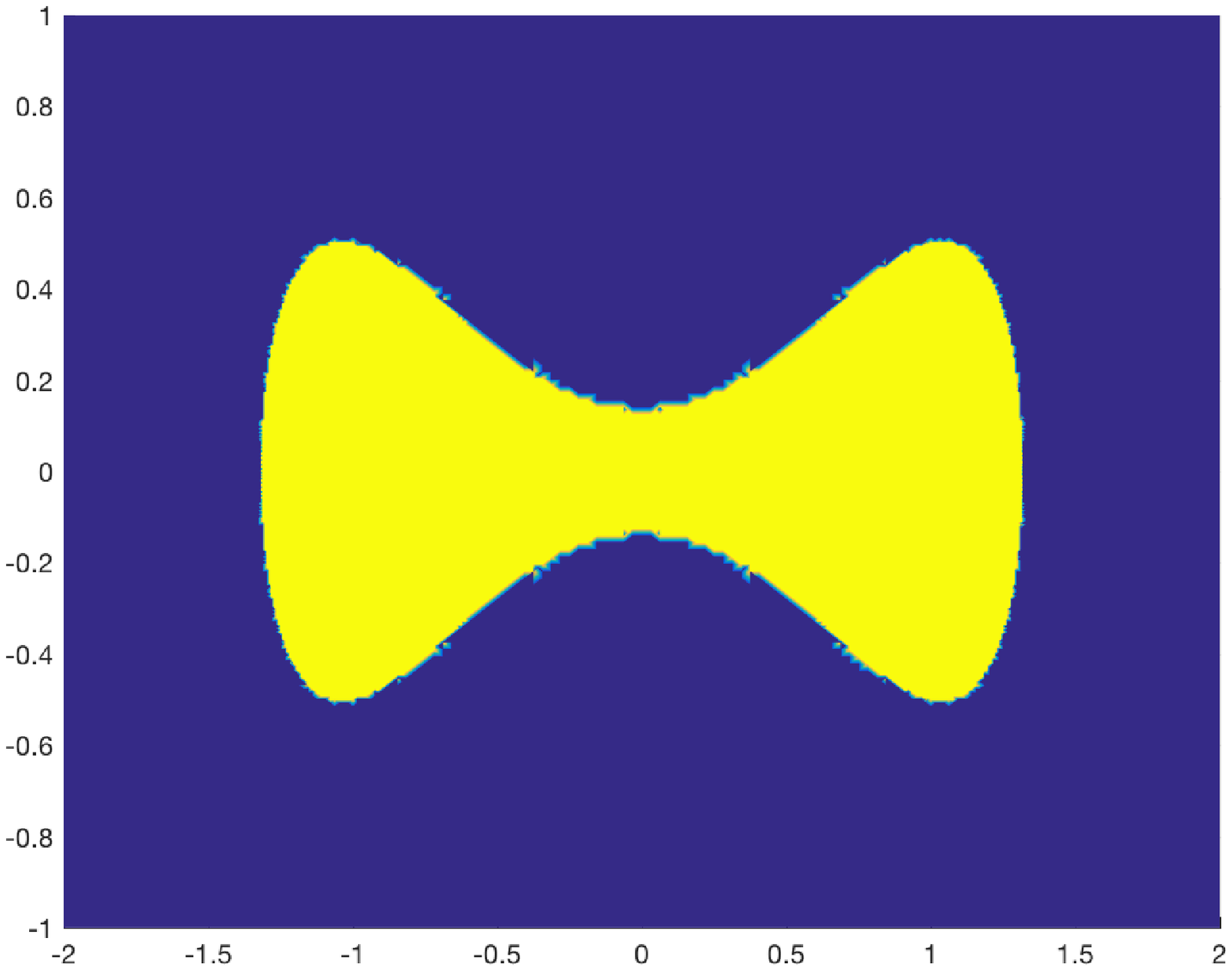}
 % \label{fig:sub2}
\end{subfigure}
\caption{
 Discrete active set (yellow part)  for Example 3 for $f=0$ and $f=-15$}
\label{fig:ActvSEx6}
\end{figure}

%%%%%%%%%%%%%%%%%%%%
%
%%%%%%%%%%%%%%%%%%%%%
%\section{Concluding Remarks}\label{sec:Conclusions}
%In this article, we derive pointwise {\em a posteriori} error estimator of discontinuous Galerkin methods in a unified setting for an elliptic obstacle problem. The enriching map and continuous barrier functions play a very important role in obtaining the reliability estimates of the error estimator. Numerical results illustrate the performance of the estimator. Though, we have reported the numerical results for SIPG and NIPG methods only but we observe the similar behavior for other DG methods. Also, the analysis can easily be modified for the upper obstacle problem. In future, we would like to analyze the pointwise estimates for higher order methods for the obstacle problem.

%%%%%%%%%%%%%%%%%%%%%%%%%%%%%%%plain, alpha, amsplain, amsalpha%%%%%%%%%%%%%%%%%%%%%%%%%%%%%%%%%%
%\bibliographystyle{elsart-num-sort}
%\bibliographystyle{alpha}
%\nocite{*}
\bibliographystyle{amsplain}
%\section*{References}
\bibliography{Obstacle1}

\end{document}